\setlist[enumerate]{font=\normalfont}
\numberwithin{equation}{section}
\def\equationautorefname~#1\null{Equation~(#1)\null}
\def\itemautorefname~#1\null{#1\null}
\def\sectionautorefname~#1\null{Section~#1\null}
\def\subsectionautorefname~#1\null{Section~#1\null}
\newcommand\numberthis{\addtocounter{equation}{1}\tag{\theequation}}
\newtheorem{theorem}{Theorem}[section]
\newaliascnt{thm}{theorem}
\newtheorem{thm}[thm]{Theorem}
\newaliascnt{lemma}{theorem}
\newtheorem{lemma}[lemma]{Lemma}
\newaliascnt{prop}{theorem}
\newtheorem{prop}[prop]{Proposition}
\newaliascnt{cor}{theorem}
\newtheorem{cor}[cor]{Corollary}
\newaliascnt{claim}{theorem}
\theoremstyle{definition}
\newaliascnt{definition}{theorem}
\newtheorem{definition}[definition]{Definition}
\newaliascnt{notation}{theorem}
\newtheorem{notation}[notation]{Notation}
\theoremstyle{remark}
\newaliascnt{remark}{theorem}
\newtheorem{remark}[remark]{Remark}
\newaliascnt{remarks}{theorem}
\newtheorem{remarks}[remarks]{Remarks}
\newaliascnt{example}{theorem}
\newtheorem{example}[example]{Example}
\newaliascnt{examples}{theorem}
\newaliascnt{question}{theorem}
\def\C{\mathbb{C}}
\def\N{\mathbb{N}}
\def\T{\mathbb{T}}
\def\Z{\mathbb{Z}}
\def\AA{\mathcal{A}}
\def\BB{\mathcal{B}}
\def\FF{\mathcal{F}}
\def\GG{\mathcal{G}}
\def\HH{\mathcal{H}}
\def\KK{\mathcal{K}}
\def\LL{\mathcal{L}}
\def\OO{\mathcal{O}}
\def\TT{\mathcal{T}}
\def\VV{\mathcal{V}}
\def\WW{\mathcal{W}}
\def\NO{\mathcal{NO}}
\def\NT{\mathcal{NT}}
\def\vecspan{\operatorname{span}}
\def\clspan{\overline{\operatorname{span}}}
\def\supp{\operatorname{supp}}
\def\osupp{\operatorname{osupp}}
\def\range{\operatorname{range}}
\def\id{\operatorname{id}}
\def\Ind{\operatorname{Ind}}
\def\sce{\operatorname{sce}}
\def\fin{\operatorname{fin}}
\def\cov{\operatorname{cov}}
\def\Mor{\operatorname{Mor}}
\def\Obj{\operatorname{Obj}}
\def\KPS{\operatorname{KPS}}
\newcommand{\la}{\ensuremath{\langle}}
\newcommand{\ra}{\ensuremath{\rangle}}
\newcommand{\lav}{\ensuremath{\lvert}}
\newcommand{\rav}{\ensuremath{\rvert}}
\newcommand{\lv}{\ensuremath{\lVert}}
\newcommand{\rv}{\ensuremath{\rVert}}
\begin{document}

\date{\today}
\title[Product-system models for twisted $C^*$-algebras of topological $k$-graphs]{Product-system models for twisted $C^*$-algebras of topological higher-rank graphs}

\author[Armstrong]{Becky Armstrong}
\author[Brownlowe]{Nathan Brownlowe}
\address{School of Mathematics and Statistics, The University of Sydney, NSW 2006, Australia}
\email{\href{mailto:becky.armstrong@sydney.edu.au}{becky.armstrong}, \href{mailto:nathan.brownlowe@sydney.edu.au}{nathan.brownlowe@sydney.edu.au}}

\subjclass[2010]{46L05 (primary)}
\keywords{$C^*$-algebra, product system, topological higher-rank graph, Cuntz--Pimsner algebra}
\thanks{The first author was supported by a Research Training Program Stipend Scholarship. \\ The second author was supported by the Australian Research Council grant DP170101821.}

\begin{abstract}
We use product systems of $C^*$-correspondences to introduce twisted $C^*$-algebras of topological higher-rank graphs. We define the notion of a continuous $\T$-valued $2$-cocycle on a topological higher-rank graph, and present examples of such cocycles on large classes of topological higher-rank graphs. To every proper, source-free topological higher-rank graph $\Lambda$, and continuous $\T$-valued $2$-cocycle $c$ on $\Lambda$, we associate a product system $X$ of $C_0(\Lambda^0)$-correspondences built from finite paths in $\Lambda$. We define the twisted Cuntz--Krieger algebra $C^*(\Lambda,c)$ to be the Cuntz--Pimsner algebra $\OO(X)$, and we define the twisted Toeplitz algebra $\TT C^*(\Lambda,c)$ to be the Nica--Toeplitz algebra $\NT(X)$. We also associate to $\Lambda$ and $c$ a product system $Y$ of $C_0(\Lambda^\infty)$-correspondences built from infinite paths. We prove that there is an embedding of $\TT C^*(\Lambda,c)$ into $\NT(Y)$, and an isomorphism between $C^*(\Lambda,c)$ and $\OO(Y)$. 
\end{abstract}

\maketitle

\section{Introduction} \label{sec: intro}

The theory of graph $C^*$-algebras originated through the work of Enomoto and Watatani in \cite{EW1980}, and then was continued by Kumjian, Pask, Raeburn, and Renault in \cite{KPRR1997} on the $C^*$-algebras of directed graphs. These $C^*$-algebras are generalisations of the Cuntz algebras introduced in \cite{Cuntz1977} and the Cuntz--Krieger algebras introduced in \cite{CK1980}. Since their introduction, directed graph $C^*$-algebras have been a thriving area of research within the field of operator algebras, and have been generalised in many different directions, including the $C^*$-algebras of higher-rank graphs (or $k$-graphs) by Kumjian and Pask in \cite{KP2000}, the $C^*$-algebras associated to $P$-graphs by the second-named author, Sims, and Vittadello in \cite{BSV2013}, and the $C^*$-algebras associated to topological graphs by Katsura in \cite{Katsura2004TAMS}. These classes of $C^*$-algebras significantly extend the range of $C^*$-algebras that can be described using graphs; for instance, all Kirchberg algebras can be modelled using topological graph $C^*$-algebras \cite{Katsura2008JFA} or $P$-graph $C^*$-algebras \cite{BSV2013}.

The topological graph and $k$-graph $C^*$-algebra constructions were unified by Yeend in \cite{Yeend2005Thesis, Yeend2006CM, Yeend2007JOT} with the introduction of topological $k$-graphs and their $C^*$-algebras. Yeend studied a topological path space carrying an action of $\Lambda$, and used this data to build a topological groupoid $G_\Lambda$. A closed invariant subset $\partial \Lambda$ of the unit space of $G_\Lambda$ also gives rise to the boundary-path groupoid $\GG_\Lambda \coloneqq G_\Lambda|_{\partial \Lambda}$. The groupoid $C^*$-algebras $C^*(G_\Lambda)$ and $C^*(\GG_\Lambda)$ are the Toeplitz and Cuntz--Krieger algebras associated to $\Lambda$, respectively.

In \cite{KPS2015TAMS}, Kumjian, Pask, and Sims generalised $k$-graph $C^*$-algebras in a different direction with the introduction of the twisted Cuntz--Krieger algebra $C^*(\Lambda, c)$ associated to a row-finite $k$-graph $\Lambda$ with no sources and a $\T$-valued categorical $2$-cocycle $c$ on $\Lambda$. The cohomology of $k$-graphs and the notion of using cohomological data to twist a $k$-graph $C^*$-algebra began in \cite{KPS2012JFA}, and has been further investigated in \cite{KPS2013JMAA, SWW2014, KPS2016JNG}. The twisting process results in increased structural complexity and has enabled the study of many interesting examples of $C^*$-algebras using graph algebra techniques, including all noncommutative tori and Heegaard-type quantum 3-spheres (see \cite[Section~7]{KPS2012JFA}).

In this paper, we initiate the study of twisted $C^*$-algebras of topological $k$-graphs. We work with topological $k$-graphs that are proper and source-free (the analogue of row-finite and no sources for directed graphs), and continuous $\T$-valued $2$-cocycles on these graphs. For each such topological $k$-graph $\Lambda$ and cocycle on $\Lambda$, we construct two product systems of $C^*$-correspondences: a product system $X$ built from finite paths in $\Lambda$, and a product system $Y$ built from infinite paths. The innovation in our construction comes from the cocycle, which we incorporate into the definition of the products in $X$ and $Y$. Product systems built from finite and infinite paths of higher-rank graphs are natural constructions, and our work follows the earlier work by Yamashita in \cite{Yamashita2009}, and Carlsen, Larsen, Sims, and Vittadello in \cite{CLSV2011}. Indeed, the product system studied in \cite{CLSV2011} is the ``untwisted'' version of $X$, and the product system studied in \cite{Yamashita2009} is the ``untwisted'' version of $Y$. See also \cite{Brownlowe2012} for a product system built from the boundary paths in a finitely aligned $k$-graph. 

Fowler introduced product systems of $C^*$-correspondences and their $C^*$-algebras in \cite{Fowler2002}. Applying Fowler's theory to our product system $X$ enables us to define our twisted $C^*$-algebras. In particular, we define the twisted Cuntz--Krieger algebra $C^*(\Lambda,c)$ to be the Cuntz--Pimsner algebra $\OO(X)$, and we define the twisted Toeplitz algebra $\TT C^*(\Lambda,c)$ to be the Nica--Toeplitz algebra $\NT(X)$. In our main result we study the relationship between these $C^*$-algebras and the Cuntz--Pimsner and Nica--Toeplitz algebras of the product system $Y$ built from infinite paths. We prove that $\TT C^*(\Lambda,c)$ embeds into $\NT(Y)$, and that $C^*(\Lambda,c)$ is isomorphic to $\OO(Y)$. 

We begin by providing some background on $C^*$-correspondences, product systems, and topological graphs in \autoref{sec: background}. In \autoref{sec: top k-graphs and cocycles} we recall Yeend's notion of a topological $k$-graph, and we prove some basic results about a class of topological $k$-graphs which we describe as proper and source-free. In \autoref{sec: top k-graphs and cocycles} we also introduce the notion of a continuous $\T$-valued $2$-cocycle on a topological $k$-graph, and we provide several broad classes of examples, including a new class of topological higher-rank graphs built from actions of $\Z^l$ on topological $k$-graphs. In \autoref{sec: product systems} we construct the product systems $X$ and $Y$, define our $C^*$-algebras $C^*(\Lambda,c)$ and $\TT C^*(\Lambda,c)$, and state our main theorem. In \autoref{sec: NC repn of X} we build a Nica-covariant representation $\psi$ of $X$ in the Nica--Toeplitz algebra of $Y$. In \autoref{sec: the C*s} we prove our main theorem by using $\psi$ to construct an embedding of $\TT C^*(\Lambda,c)$ into $\NT(Y)$, and an isomorphism between $C^*(\Lambda,c)$ and $\OO(Y)$.

\vspace{0.5em}
\begin{center}
\emph{Acknowledgements.} The authors would like to thank Aidan Sims for many fruitful discussions.
\end{center}

\vspace{0.5em}
\section{Background} \label{sec: background}

In this section we present the necessary background on $C^*$-correspondences, product systems of $C^*$-correspondences and their associated $C^*$-algebras, and topological graphs and their associated $C^*$-correspondences.

\subsection{\texorpdfstring{$C^*$-correspondences}{C*-correspondences}, product systems, and their associated \texorpdfstring{$C^*$-algebras}{C*-algebras}} \label{subsec: corr, prod sys, and C stars}

Let $A$ be a $C^*$-algebra. A \emph{$C^*$-correspondence over $A$}, or an \emph{$A$-correspondence}, is a right Hilbert $A$-module $X$ with a left action of $A$ on $X$ implemented by a homomorphism $\phi \colon A \to \LL(X)$, where $\LL(X)$ is the algebra of adjointable operators on $X$. We frequently write $a \cdot x$ for $\phi(a)x$. We denote the $A$-valued inner product on $X$ by $\la \cdot, \cdot \ra_X$ and the induced norm on $X$ by $\lv \cdot \rv_X$. We write $\KK(X)$ for the generalised compact operators $\clspan\{ \Theta_{x,y} : x,y \in X \} \subseteq \LL(X)$, where $\Theta_{x,y}(z) \coloneqq x \cdot \la y,z \ra_X$. 

A \emph{representation} of an $A$-correspondence $X$ in a $C^*$-algebra $B$ is a pair $(\psi,\pi)$ consisting of a linear map $\psi \colon X \to B$ and a homomorphism $\pi \colon A \to B$ such that for all $x,y \in X$, $a \in A$ we have
\[
\psi(x \cdot a) = \psi(x)\pi(a),\ \psi(x)^*\psi(y) = \pi( \la x,y \ra_X ),\, \text{ and }\ \psi(a \cdot x) = \pi(a)\psi(x).
\]
We know from \cite[Proposition~1.6]{FR1999} that $(\psi,\pi)$ induces a homomorphism $\psi^{(1)} \colon \KK(X) \to B$, characterised by $\psi^{(1)}(\Theta_{x,y}) = \psi(x)\psi(y)^*$ (see also \cite[page~202]{Pimsner1997}).

For our background on product systems, we will only discuss product systems over $\N^k$; for the same definitions and results for product systems over subsemigroups of quasi-lattice ordered groups, see \cite{Fowler2002}, where this theory originated, or \cite[Section~2]{SY2010}.

Let $A$ be a $C^*$-algebra. A \emph{product system} over $\N^k$ is a semigroup $X=\sqcup_{n\in\N^k}X_n$ such that
\begin{enumerate}[label=(\roman*)]
\item each $X_n$ is an $A$-correspondence;
\item the $A$-correspondence $X_0$ is a copy of $_A A_A$;
\item for each nonzero $m,n \in \N^k$, the map $X_m\times X_n\to X_{m+n}$ given by $x\otimes y\mapsto xy$ extends to an isomorphism of $A$-correspondences $X_m\otimes_A X_n \cong X_{m+n}$; and
\item $ax = a \cdot x$ and $xa = x \cdot a$, for each $x \in X$ and $a \in X_0$.
\end{enumerate}
We denote by $\phi_{X_n}$ the homomorphism $A \to \LL(X_n)$ implementing the left action of $A$ on $X_n$, and by $\la \cdot, \cdot \ra_{X_n}$ the inner product on $X_n$.

For nonzero $m,n \in \N^k$ with $m \le n$, there is a homomorphism $\iota_m^n \colon \LL(X_m) \to \LL(X_n)$ characterised by $\iota_m^n(S)(xy) = (Sx)y$ for all $x \in X_m$, $y \in X_{n-m}$. Since $\KK(X_0)$ can be identified with $A$, we define $\iota_0^n \coloneqq \phi_{X_n}$ for all $n \in \N^k$. For each $m,n \in \N^k$ we denote by $m \vee n$ the coordinatewise maximum of $m$ and $n$. A product system $X$ is \emph{compactly aligned} if, for all $m,n \in \N^k$, and all $S \in \KK(X_m)$ and $T \in \KK(X_n)$, we have $\iota_m^{m \vee n}(S)\iota_n^{m \vee n}(T) \in \KK(X_{m\vee n})$. Recall from \cite[Proposition~5.8]{Fowler2002} that if the left action $\phi_{X_n}$ has range in $\KK(X_n)$, for all $n \in \N^k$, then $X$ is compactly aligned.

A \emph{representation} $\psi$ of a product system $X$ in a $C^*$-algebra $B$ is a map from $X$ to $B$ such that each $(\psi|_{X_n},\psi|_{X_0})$ is a representation of the $C^*$-correspondence $X_n$, and $\psi(xy) = \psi(x)\psi(y)$ for all $x,y \in X$. For each $n \in \N^k$, we write $\psi_n$ for $\psi|_{X_n}$. We denote by $\psi^{(n)}$ the homomorphism $\KK(X_n) \to B$ characterised by $\psi^{(n)}(\Theta_{x,y}) = \psi_n(x)\psi_n(y)^*$. A representation $\psi$ of a compactly aligned product system $X$ is \emph{Nica covariant} if, for all $m,n \in \N^k$, and all $S \in \KK(X_m)$, $T \in \KK(X_n)$, we have
\[
\psi^{(m)}(S) \psi^{(n)}(T) = \psi^{(m \vee n)}( \iota_m^{m \vee n}(S) \iota_n^{m \vee n}(T)).
\]
In \cite[Theorem~6.3]{Fowler2002}, Fowler introduced the \emph{Nica--Toeplitz algebra} $\NT(X)$ (originally denoted by $\TT_{\cov}(X)$), which is the $C^*$-algebra generated by a universal Nica-covariant representation $i_X \colon X \to \NT(X)$; that is, $\NT(X)$ is generated by the image of $i_X$, and if $\psi \colon X \to B$ is a Nica-covariant representation of $X$ in a $C^*$-algebra $B$, then there exists a homomorphism $\psi^\NT \colon \NT(X) \to B$ satisfying $\psi^\NT \circ i_X = \psi$. For each $n \in \N^k$, we write $i_{X,n}$ for $i_X|_{X_n}$. We know that $i_X$ is isometric because the Fock representation is isometric (see \cite{Fowler2002}).

Suppose $X$ is a product system of $A$-correspondences and each left action $\phi_{X_n}$ is injective and has range in $\KK(X_n)$. A representation $\zeta$ of $X$ is \emph{Cuntz--Pimsner covariant} if 
\[
\zeta_0(a) = \zeta^{(n)}(\phi_{X_n}(a))\, \text{ for all } a \in A,\, n \in \N^k.
\]
The \emph{Cuntz--Pimsner algebra} $\OO(X)$ is the universal $C^*$-algebra generated by a Cuntz--Pimsner representation of $X$. Under the assumptions on the left actions, we can apply \cite[Lemma~2.2]{AaHR2018} to see that $\OO(X)$ is the quotient of $\NT(X)$ by the ideal generated by $\{ i_{X,0}(a) - i_X^{(n)}(\phi_{X_n}(a)) : n \in \N^k, a \in A \}$. We denote by $q_X$ the quotient map $\NT(X) \to \OO(X)$, and we write $j_X \coloneqq q_X \circ i_X$ for the Cuntz--Pimsner-covariant representation generating $\OO(X)$. If $\zeta$ is a Cuntz--Pimsner-covariant representation of $X$, then we denote by $\zeta^\OO$ the homomorphism of $\OO(X)$ induced from the universal property; that is, $\zeta^\OO$ satisfies $\zeta^\OO \circ j_X = \zeta$.

We note here that under the assumptions on the left actions, the Cuntz--Pimsner algebra $\OO(X)$ coincides with the Cuntz--Nica--Pimsner algebra $\NO(X)$ from \cite{SY2010}.

We finish this subsection by stating the following uniqueness theorem for representations of the Nica--Toeplitz algebra of a product system. We will use this theorem in the proof of our main theorem. This is an abridged version of \cite[Theorem~3.2~(i)]{Fletcher2017}, and is a reformulation of \cite[Theorem~7.2]{Fowler2002} in the more general setting of nonessential $C^*$-correspondences. (See also \cite{KL2017i, KL2017ii} for uniqueness theorems in a more general setting.)

\begin{theorem} \label{thm: Fletcher uniqueness theorem}
Let $X$ be a compactly aligned product system over $\N^k$ with coefficient algebra $A$. Suppose that $\psi\colon X \to B(\HH)$ is a Nica-covariant representation of $X$ on a Hilbert space $\HH$. For each $n\in \N^k$, denote by $P_n^\psi$ the projection onto $\overline{\psi_n(X_n)\HH}$. If, for any finite set $K\subseteq \N^k {\setminus} \{0\}$, the representation $\varphi_{\psi,K}$ of $A$ on $\HH$ given by
\[
\varphi_{\psi,K}\colon a\mapsto \psi_0(a)\prod_{n\in K} \big( 1_\HH - P_n^\psi)
\]
is faithful, then the induced homomorphism $\psi^\NT\colon \NT(X)\to B(\HH)$ is faithful.
\end{theorem}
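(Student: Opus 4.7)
The plan is to exploit the natural gauge action on $\NT(X)$ together with a conditional-expectation argument, thereby reducing the question of injectivity of $\psi^\NT$ to injectivity on the fixed-point algebra, which in turn is controlled by the family of hypotheses on $\varphi_{\psi,K}$.

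First I would recall that $\NT(X)$ carries a strongly continuous gauge action $\gamma \colon \T^k \to \operatorname{Aut}(\NT(X))$ with $\gamma_z(i_{X,n}(x)) = z^n \, i_{X,n}(x)$, and that averaging over $\T^k$ yields a faithful conditional expectation $E \colon \NT(X) \to \NT(X)^\gamma$. Since $\psi$ is not assumed gauge equivariant, I would form the standard dilation $\widetilde\psi \coloneqq \psi \otimes \lambda$ on $\HH \otimes \ell^2(\Z^k)$, where $\lambda$ encodes the regular representation of $\Z^k$; this dilation is Nica covariant and gauge equivariant, with $\widetilde\psi^\NT$ faithful if and only if $\psi^\NT$ is. The standard gauge-invariant uniqueness argument for product systems then reduces faithfulness of $\widetilde\psi^\NT$ to faithfulness of its restriction to $\NT(X)^\gamma$.

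The second step is to analyse the core $\NT(X)^\gamma$. It is the closed linear span of products $i_X(x) i_X(y)^*$ with $d(x) = d(y)$, and — using Nica covariance and the identification of $\KK(X_n)$ with $\overline{\vecspan}\{i_X^{(n)}(\Theta_{x,y}) : x,y \in X_n\}$ — it can be filtered by finite subsets $F \subseteq \N^k$ encoding which fibre degrees appear. The key algebraic fact, established inductively over the filtration, is an inclusion-exclusion style decomposition expressing an element $i_X^{(n)}(S)$ as a sum, over finite $K \subseteq \N^k \setminus \{0\}$ disjoint from $n$, of "corner" terms of the form $i_X^{(n)}(S)\,\prod_{m \in K}\bigl(1 - P^{i_X}_m\bigr)$ plus compact pieces at higher degrees. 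Applying $\psi^\NT$ to such a telescoping sum, each corner term maps to an operator of the form $\psi^{(n)}(S) \prod_{m\in K}(1 - P^\psi_m)$, and the hypothesis that each $\varphi_{\psi,K}$ is faithful — combined with Nica covariance, which forces the cut-down $\KK(X_n)$ action onto an orthogonal range — guarantees that each nonzero corner has nonzero image.

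The main obstacle will be the bookkeeping for this filtration argument: one has to show that the projections $P_n^{i_X}$ inside $\LL(F(X))$ and the projections $P_n^\psi$ interact sufficiently compatibly with the compact operator pieces $i_X^{(n)}(\KK(X_n))$ to make the telescoping identity stable under $\psi^\NT$, and to conclude that a nonzero element of $\NT(X)^\gamma$ produces a nonzero corner detectable by some $\varphi_{\psi,K}$. This is precisely the content of \cite[Theorem~7.2]{Fowler2002} and its generalisation in \cite[Theorem~3.2]{Fletcher2017} for nonessential coefficient algebras; once the decomposition is in place, the conclusion that $\psi^\NT$ is faithful on $\NT(X)^\gamma$, and hence on $\NT(X)$, follows by a direct limit argument.
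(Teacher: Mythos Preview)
The paper does not give its own proof of this theorem: it is stated as background, explicitly described as ``an abridged version of \cite[Theorem~3.2~(i)]{Fletcher2017}, and is a reformulation of \cite[Theorem~7.2]{Fowler2002}'', and is then used as a black box in the proof of \autoref{thm: main theorem}~\autoref{item: thm psi and the NT map}. So there is nothing in the paper to compare your argument against.

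That said, your sketch is in line with the standard strategy behind the cited results. The dilation $\psi \otimes \lambda$ to restore gauge equivariance, the reduction to the core $\NT(X)^\gamma$ via the faithful expectation, and the filtration/inclusion--exclusion analysis of $\NT(X)^\gamma$ in terms of the projections $P_n^\psi$ and the compact pieces $i_X^{(n)}(\KK(X_n))$ are exactly the ingredients of Fowler's and Fletcher's proofs. The one place your outline is vague is the passage from ``each $\varphi_{\psi,K}$ faithful on $A$'' to ``each corner $\psi^{(n)}(S)\prod_{m\in K}(1-P_m^\psi)$ detects nonzero $S \in \KK(X_n)$'': this step needs the observation that, for a Nica-covariant representation, the projection $P_n^\psi$ commutes with $\psi_0(A)$ and that the map $S \mapsto \psi^{(n)}(S)(1-P_m^\psi)$ on $\KK(X_n)$ is controlled by the action of $A$ on $X_n$ and hence by $\varphi_{\psi,K}$. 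You acknowledge this as the main bookkeeping obstacle and defer to the cited references, which is appropriate here since the paper itself does the same.
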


\subsection{\texorpdfstring{$C^*$-correspondences}{C*-correspondences} from topological graphs} \label{subsec: corrs from top graphs}

Recall from Katsura's \cite{Katsura2004TAMS} that a \emph{topological graph} $E=(E^0,E^1,r,s)$ consists of locally compact Hausdorff spaces $E^0$ and $E^1$, a continuous map $r\colon E^1\to E^0$, and a local homeomorphism $s\colon E^1\to E^0$. In \cite{Katsura2004TAMS}, Katsura associates to each topological graph $E = (E^0, E^1, r, s)$ a $C_0(E^0)$-correspondence $X(E)$, which is the completion of $C_c(E^1)$ under the norm induced from the inner product $\la \cdot, \cdot \ra_{X(E)} \colon E^0 \to \C$ given by
\[
\la f,g \ra_{X(E)}(v) \coloneqq \sum_{e \in s^{-1}(v)} \overline{f(e)}g(e),\, \text{ for all }\, f,g \in C_c(E^1),\, v \in E^0.
\] 
On $C_c(E^1)$, the left and right actions of $C_0(E^0)$ are given by $(g \cdot f)(e) \coloneqq g(r(e))f(e)$ and $(f \cdot g)(e) \coloneqq f(e)g(s(e))$, respectively, for all $f \in C_c(E^1)$, $g \in C_0(E^0)$, $e \in E^1$. Katsura originally defines $X(E)$ to be the set of functions $\{ f \in C(E^1) : \la f, f \ra_{X(E)} \in C_0(E^0)\}$. We freely use both descriptions of $X(E)$ throughout this paper, and we call it the \emph{topological graph correspondence of $E$}.

\begin{definition} \label{def: s-section}
Let $E  = (E^0,E^1,r,s)$ be a topological graph. A subset $W$ of $E^1$ is called an \emph{$s$-section} of $E^1$ if there exists an open subset $U$ of $E^1$ containing $W$ such that the map $s|_U\colon U \to s(U)$ is a homeomorphism.
\end{definition}

\begin{notation} \label{notation: F_E}
For a topological graph $E=(E^0,E^1,r,s)$, we denote by $F_E$ the set of functions $f\in C_c(E^1)$ such that $\supp(f)$ is an $s$-section.
\end{notation}

\begin{lemma} \label{lem: nice functions for top graph mods}
Let $E=(E^0,E^1,r,s)$ be a topological graph, and $X(E)$ be the associated $C_0(E^0)$-correspondence. Then $\vecspan F_E$ is dense in $X(E)$ with respect to the module norm $\lv \cdot \rv_{X(E)}$.
\end{lemma}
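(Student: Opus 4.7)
The plan is to show the stronger statement that $C_c(E^1) \subseteq \operatorname{span} F_E$; since $C_c(E^1)$ is dense in $X(E)$ by construction, this immediately gives the result. The idea is a standard partition-of-unity argument exploiting the fact that $s$ is a local homeomorphism.

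Fix $f \in C_c(E^1)$ and set $K \coloneqq \operatorname{supp}(f)$, which is compact. Because $s$ is a local homeomorphism, every point of $E^1$ has an open neighbourhood on which $s$ restricts to a homeomorphism onto its image. Covering $K$ by such neighbourhoods and extracting a finite subcover, I obtain open sets $U_1,\ldots,U_n \subseteq E^1$ with $K \subseteq \bigcup_i U_i$ and $s|_{U_i}$ a homeomorphism onto $s(U_i)$ for each $i$. Since $E^1$ is locally compact Hausdorff and $K$ is compact, I can choose a partition of unity $\rho_1,\ldots,\rho_n \in C_c(E^1)$ subordinate to this cover, i.e.\ $\operatorname{supp}(\rho_i) \subseteq U_i$ and $\sum_{i=1}^n \rho_i \equiv 1$ on $K$.

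Then $f = \sum_{i=1}^n \rho_i f$ pointwise, and each summand $\rho_i f$ lies in $C_c(E^1)$ with $\operatorname{supp}(\rho_i f) \subseteq \operatorname{supp}(\rho_i) \subseteq U_i$. Since $U_i$ is an open set on which $s$ is a homeomorphism, $\operatorname{supp}(\rho_i f)$ is an $s$-section in the sense of \autoref{def: s-section}, so $\rho_i f \in F_E$. Hence $f \in \operatorname{span} F_E$, giving $C_c(E^1) \subseteq \operatorname{span} F_E$ and therefore density in $X(E)$.

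There is no real obstacle; the only small subtlety is choosing the partition of unity inside $C_c(E^1)$ (rather than merely continuous) so that the products $\rho_i f$ remain compactly supported, which is guaranteed by the local compactness of $E^1$ and standard topology. The norm on $X(E)$ plays no role because the containment $C_c(E^1) \subseteq \operatorname{span} F_E$ is an exact pointwise equality of functions, not merely an approximation.
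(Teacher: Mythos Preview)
Your argument is correct and in fact proves the stronger equality $\vecspan F_E = C_c(E^1)$, not merely density. The paper takes a less direct route: it invokes the Stone--Weierstrass theorem to show that $C_c(U)\cap\vecspan F_E$ is uniformly dense in $C_c(U)$ for every open $U\subseteq E^1$, and then appeals to \cite[Lemma~1.26]{Katsura2004TAMS} to upgrade uniform density to density in the module norm. Your partition-of-unity decomposition bypasses both steps, since once $\vecspan F_E$ literally equals $C_c(E^1)$ there is nothing to approximate and no need to compare the two norms. The paper's approach is the one that generalises more readily when the spanning set is not closed under the operations needed for a partition-of-unity argument (as in the proof of \autoref{prop: infinite path product system}, where the cocycle obstructs closure under multiplication), but for this particular lemma your route is cleaner.
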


\begin{proof}
A standard argument using the Stone--Weierstrass theorem shows that for any open subset $U$ of $E_1$, the subalgebra $C_c(U) \cap \vecspan(F_E)$ is uniformly dense in $C_c(U)$. Applying \cite[Lemma~1.26]{Katsura2004TAMS} now gives the result.
\end{proof}

It is straightforward to prove the following corollary of \autoref{lem: nice functions for top graph mods}.

\begin{cor} \label{cor: new compacts}
Let $E=(E^0,E^1,r,s)$ be a topological graph, and $X(E)$ be the associated $C_0(E^0)$-correspondence. Then $\KK(X(E))=\clspan\{\Theta_{f,g} : f,g\in F_E\}$.
\end{cor}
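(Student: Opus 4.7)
The plan is to leverage the previous lemma (\autoref{lem: nice functions for top graph mods}) together with the joint continuity and bilinearity of the rank-one operator construction $(x,y) \mapsto \Theta_{x,y}$.

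First I would recall that by definition, $\KK(X(E)) = \clspan\{\Theta_{x,y} : x,y \in X(E)\}$, so the inclusion $\clspan\{\Theta_{f,g} : f,g \in F_E\} \subseteq \KK(X(E))$ is immediate. For the reverse inclusion, it suffices to show that every generator $\Theta_{x,y}$ with $x,y \in X(E)$ lies in $\clspan\{\Theta_{f,g} : f,g \in F_E\}$.

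Next I would use the standard estimate
\[
\lv \Theta_{x,y} - \Theta_{x',y'} \rv_{\LL(X(E))} \le \lv x - x' \rv_{X(E)} \lv y \rv_{X(E)} + \lv x' \rv_{X(E)} \lv y - y' \rv_{X(E)},
\]
which shows that $(x,y) \mapsto \Theta_{x,y}$ is jointly continuous from $X(E) \times X(E)$ into $\LL(X(E))$. By \autoref{lem: nice functions for top graph mods}, we can choose sequences $(f_n), (g_n)$ in $\vecspan F_E$ with $f_n \to x$ and $g_n \to y$ in $\lv \cdot \rv_{X(E)}$, and then $\Theta_{f_n,g_n} \to \Theta_{x,y}$ in $\LL(X(E))$.

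Finally, since $\Theta_{\cdot,\cdot}$ is linear in the first argument and conjugate-linear in the second, each $\Theta_{f_n,g_n}$ (with $f_n, g_n$ finite linear combinations of elements of $F_E$) expands as a finite linear combination of operators of the form $\Theta_{f,g}$ with $f,g \in F_E$. Hence $\Theta_{f_n,g_n} \in \vecspan\{\Theta_{f,g} : f,g \in F_E\}$, and taking the limit puts $\Theta_{x,y}$ in $\clspan\{\Theta_{f,g} : f,g \in F_E\}$. There is really no main obstacle here — the whole argument is a standard density-plus-continuity reduction — which is presumably why the authors describe the corollary as straightforward.
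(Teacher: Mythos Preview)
Your proof is correct and is exactly the standard density-plus-continuity argument the authors have in mind; the paper does not actually give a proof but simply remarks that the corollary is straightforward to deduce from \autoref{lem: nice functions for top graph mods}.
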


\vspace{0.5em}
\section{Topological higher-rank graphs and continuous cocycles} \label{sec: top k-graphs and cocycles}

In this section we provide some background on topological $k$-graphs, and prove a number of basic results about proper, source-free topological $k$-graphs. We then describe the construction of two different collections of $C^*$-correspondences from topological $k$-graphs. We conclude the section by defining the notion of a continuous $\T$-valued $2$-cocycle on a topological $k$-graph, and discussing several classes of examples. 

\subsection{Proper and source-free topological higher-rank graphs} \label{subsec: top graphs and top k-graphs}

Yeend unified the notions of topological graphs and $k$-graphs with the following definition, which is taken from \cite[Section~2]{Yeend2006CM}.

\begin{definition} \label{def: top k-graph}
Let $k \in \N {\setminus} \{0\}$. A \emph{topological higher-rank graph}, or \emph{topological $k$-graph}, is a pair $(\Lambda, d)$ consisting of a small category $\Lambda = (\Obj(\Lambda), \Mor(\Lambda), r, s, \circ)$ and a functor $d \colon \Lambda \to \N^k$, called the \emph{degree map}, which satisfy:
\begin{enumerate}[label=(\roman*)]
\item $\Obj(\Lambda)$ and $\Mor(\Lambda)$ are both second-countable, locally compact Hausdorff spaces;
\item $r, s \colon \Mor(\Lambda) \to \Obj(\Lambda)$ are continuous, and $s$ is a local homeomorphism;
\item the composition map $\circ \colon \Lambda \times_c \Lambda \coloneqq \{(\lambda, \mu) \in \Lambda \times \Lambda : s(\lambda) = r(\mu) \} \to \Lambda$ is continuous and open, where $\Lambda \times_c \Lambda$ has the subspace topology inherited from the product topology on $\Lambda \times \Lambda$;
\item $d$ is continuous, where $\N^k$ has the discrete topology; and
\item the \emph{unique factorisation property}: for all $\lambda \in \Lambda$ and $m, n \in \N^k$ such that $d(\lambda) = m + n$, there exists a unique pair $(\mu, \nu) \in \Lambda \times_c \Lambda$ such that $\lambda = \mu \nu$, $d(\mu) = m$, and $d(\nu) = n$.
\end{enumerate}
As is customary in the theory of $k$-graphs, we refer to the morphisms of $\Lambda$ as \emph{paths} and the objects of $\Lambda$ as \emph{vertices}. We call the domain and codomain maps of $\Lambda$ the \emph{source} and \emph{range} maps, respectively. We refer to $\Lambda \times_c \Lambda$ as the set of \emph{composable paths} in $\Lambda$. We usually simply denote the topological $k$-graph $(\Lambda,d)$ by $\Lambda$. We extend \autoref{def: s-section} to the topological $k$-graph setting by defining an \emph{$s$-section} of $\Lambda$ to be any subset $W$ of an open subset $U$ of $\Lambda$ such that the map $s|_U\colon U \to s(U)$ is a homeomorphism.
\end{definition}

\begin{notation} \label{notation: top k-graphs notation}
Let $\Lambda$ be a topological $k$-graph.
\begin{enumerate}[label=(\roman*)]
\item For each $n \in \N^k$, we define $\Lambda^n \coloneqq d^{-1}(n)$. Since $\N^k$ has the discrete topology and $d$ is continuous, $\Lambda^n$ is clopen for each $n \in \N^k$.
\item For any subset $U$ of $\Lambda$ and vertex $v \in \Lambda^0$, we define $vU \coloneqq r|_U^{-1}(v)$ and $Uv \coloneqq s|_U^{-1}(v)$.
\item For any two subsets $U$ and $V$ of $\Lambda$, we write $UV \coloneqq \{ \mu\nu : \mu \in U,\, \nu \in V,\, s(\mu) = r(\nu) \}$.
\item For $m, n \in \N^k$, $U \subseteq \Lambda^m$, and $V \subseteq \Lambda^n$, we define
\[
U \vee V \coloneqq U \Lambda^{(m \vee n) - m} \cap V \Lambda^{(m \vee n) - n} \subseteq \Lambda^{m \vee n}.
\]
\item For each $\lambda \in \Lambda$ and $m, n \in \N^k$ with $m \le n \le d(\lambda)$, the unique factorisation property implies that there are unique paths $\alpha \in \Lambda^m$, $\beta \in \Lambda^{n-m}$, and $\gamma \in \Lambda^{d(\lambda)-n}$ such that $\lambda = \alpha\beta\gamma$. We write $\lambda(0,m)$ for $\alpha$, $\lambda(m,n)$ for $\beta$, and $\lambda(n,d(\lambda))$ for $\gamma$.
\end{enumerate}
\end{notation}

\begin{definition}
A degree-preserving functor between topological $k$-graphs is called a \emph{$k$-graph morphism}. More precisely, if $\Lambda_1$ and $\Lambda_2$ are topological $k$-graphs, then a functor $f\colon \Lambda_1 \to \Lambda_2$ is a $k$-graph morphism if, for all $(\lambda,\mu) \in \Lambda_1 \times_c \Lambda_1$, we have $f(r_1(\lambda)) = r_2(f(\lambda))$, $f(s_1(\lambda)) = s_2(f(\lambda))$, $(f(\lambda),f(\mu)) \in \Lambda_2 \times_c \Lambda_2$, $f(\lambda\mu) = f(\lambda)f(\mu)$, and $d_1(\lambda) = d_2(f(\lambda))$. An \emph{automorphism} of a topological $k$-graph $\Lambda$ is a $k$-graph morphism $g\colon \Lambda \to \Lambda$ that is open, continuous, and bijective.
\end{definition}

The following definition is taken from \cite[Section~6]{Yeend2007JOT}.

\begin{definition} \label{def: proper and source-free}
Let $\Lambda$ be a topological $k$-graph. A vertex $v \in \Lambda^0$ is a \emph{source} if there is $i \in \{1,\dotsc,k\}$ such that $v\Lambda^{e_i} = \emptyset$. We say that $\Lambda$ is \emph{source-free} if none of its vertices are sources. We say that $\Lambda$ is \emph{proper} if, for each $m \in \N^k$, the range map $r|_{\Lambda^m}$ is a proper map, in the sense that the preimage of any compact subset of $\Lambda^0$ is a compact subset of $\Lambda^m$.
\end{definition}

\begin{remarks} \label{rems: about proper and s-f} \leavevmode
\begin{enumerate}[label=(\roman*)]
\item It is straightforward to show that if $\Lambda$ is a source-free topological $k$-graph, then for all $v \in \Lambda^0$ and $m \in \N^k$, we have $v\Lambda^m \ne \emptyset$.

\item Properness for topological $k$-graphs is the analogue of row-finiteness for discrete $k$-graphs. The notion of row-finiteness for topological $k$-graphs defined in \cite[Definition~2.2]{Yamashita2009} is equivalent to properness. Another equivalent characterisation of a topological $k$-graph being proper is that for each $m \in \N^k$ and $v \in \Lambda^0$, $r|_{\Lambda^m}$ is a closed map and $v\Lambda^m$ is a compact subset of $\Lambda^m$.
\end{enumerate}
\end{remarks}

The following definition is taken from \cite[Definition~2.3]{Yeend2007JOT}.

\begin{definition}
Let $\Lambda$ be a topological $k$-graph. We say that $\Lambda$ is \emph{compactly aligned} if for all $m, n \in \N^k$, and compact subsets $U \subseteq \Lambda^m$ and $V \subseteq \Lambda^n$, the set $U \vee V$ is compact in $\Lambda^{m \vee n}$.
\end{definition}

\begin{remark}
Many of the results in \cite{Yeend2006CM, Yeend2007JOT} pertain to topological $k$-graphs that are compactly aligned, and we know from \cite[Remark~6.5]{Yeend2007JOT} that any proper topological $k$-graph is compactly aligned.
\end{remark}

In \cite{Yeend2006CM, Yeend2007JOT}, Yeend constructs two groupoids from a topological $k$-graph $\Lambda$: the path groupoid $G_\Lambda$, and the boundary-path groupoid $\GG_\Lambda$. The unit space of the path groupoid is the path space $X_\Lambda$, which is defined in \cite[Definition~3.1]{Yeend2007JOT}. The unit space of the boundary-path groupoid is the boundary-path space $\partial\Lambda$, which is defined in \cite[Definition~4.2]{Yeend2007JOT}, and is a closed invariant subspace of $X_\Lambda$ (by \cite[Proposition~4.4~and~Proposition~4.7]{Yeend2007JOT}). It follows from \cite[Lemma~6.6]{Yeend2007JOT} that when $\Lambda$ is proper and source-free, the boundary-path space has a simpler characterisation, which is analogous to the definition of the infinite-path space of a $k$-graph in \cite[Definitions~2.1]{KP2000}. In this paper, we only work with topological $k$-graphs that are proper and source-free, and hence it will suffice for our purposes to only provide a definition of the infinite-path space.

As in \cite[Examples~1.7~(ii)]{KP2000}, for $k \in \N {\setminus} \{0\}$, let $\Omega_k$ be the small category with objects $\Obj(\Omega_k) \coloneqq \N^k$; morphisms $\Mor(\Omega_k) \coloneqq \{ (m,n) \in \N^k \times \N^k : m \le n \}$; range and source maps given by $r(m,n) \coloneqq m$ and $s(m,n) \coloneqq n$, respectively; and composition of morphisms given by $\circ((m,n),(n,p)) \coloneqq (m,p)$. Let $d\colon \Omega_k \to \N^k$ be given by $d(m,n) \coloneqq n-m$. Then the pair $(\Omega_k,d)$ is a $k$-graph.

\begin{definition}
Let $\Lambda$ be a proper, source-free topological $k$-graph. The \emph{infinite-path space} of $\Lambda$ is the set
\[
\Lambda^\infty \coloneqq \{ x\colon \Omega_k \to \Lambda\, :\, x \text{ is a $k$-graph morphism} \}.
\]
\end{definition}

\begin{notation}
Let $\Lambda$ be a proper, source-free topological $k$-graph. We extend the range map $r$ to $\Lambda^\infty$ via $r(x) \coloneqq x(0)$. For each $v \in \Lambda^\infty$, we define $v \Lambda^\infty \coloneqq \{ x \in \Lambda^\infty : r(x) = v \}$. Given any subset $U$ of a proper, source-free topological $k$-graph $\Lambda$, we define the \emph{cylinder set}
\[
Z(U) \coloneqq \{ x \in \Lambda^\infty : x(0,n) \in U \text{ for some } n \in \N^k \}.
\]
\end{notation}

\begin{remark} \label{rem: infinite paths coming into every vertex}
As in \cite[Section~2]{KP2000}, it follows from the fact that $\Lambda$ is source-free that for each $v \in \Lambda^0$, we have $v\Lambda^\infty \ne \emptyset$. It also follows that for each $\lambda \in \Lambda$, we have $Z(\{\lambda\}) \ne \emptyset$. Hence, for each $n \in \N^k$, we have $\Lambda^n = \{ x(0,n) : x \in \Lambda^\infty \}$.
\end{remark}

We now wish to give the infinite-path space a locally compact Hausdorff topology.

\begin{prop} \label{prop: Lambda^infty is a LCH space}
Let $\Lambda$ be a proper, source-free topological $k$-graph. The collection
\[
\{ Z(U) : \text{$U$ is an open subset of $\Lambda^n$ for some $n \in \N^k$} \}
\]
is a basis for a locally compact Hausdorff topology on $\Lambda^\infty$.
\end{prop}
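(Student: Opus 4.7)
The plan is to verify in turn that the given collection forms a basis, that the resulting topology is Hausdorff, and that it is locally compact. For the basis axioms, the collection covers $\Lambda^\infty$ because $\Lambda^0$ is open in $\Lambda$ and $Z(\Lambda^0) = \Lambda^\infty$ (since $x(0,0) = r(x) \in \Lambda^0$ for every $x$). For the intersection property, given $x \in Z(U) \cap Z(V)$ with $U \subseteq \Lambda^m$ and $V \subseteq \Lambda^n$ open, set $p \coloneqq m \vee n$; the set $U \vee V = U\Lambda^{p-m} \cap V\Lambda^{p-n} \subseteq \Lambda^p$ is open because the composition map $\circ\colon \Lambda \times_c \Lambda \to \Lambda$ is open and $\Lambda^p$ is clopen, and the unique factorisation property then yields $x(0,p) \in U \vee V$ together with the containment $Z(U \vee V) \subseteq Z(U) \cap Z(V)$.

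For Hausdorffness, note that by unique factorisation a $k$-graph morphism $x\colon \Omega_k \to \Lambda$ is determined by the data $(x(0,m))_{m \in \N^k}$. Thus if $x \ne y$ in $\Lambda^\infty$, there exists some $n$ with $x(0,n) \ne y(0,n)$, and separating these in the Hausdorff space $\Lambda^n$ by disjoint open sets $U$ and $V$ yields disjoint basic opens $Z(U), Z(V)$ separating $x$ and $y$.

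The heart of the proof is local compactness, which I would deduce from the following key lemma: whenever $K \subseteq \Lambda^n$ is compact, $Z(K)$ is compact in $\Lambda^\infty$. To prove it, I would embed $\iota\colon \Lambda^\infty \hookrightarrow \prod_{m \in \N^k} \Lambda^m$ via $\iota(x) \coloneqq (x(0,m))_m$ and verify that $\iota$ is a topological embedding with closed image; the agreement of topologies follows from exactly the same $m \vee n$ computation as in the intersection axiom above, and closedness of the image follows because it is cut out by the consistency conditions $\lambda_n(0,m) = \lambda_m$ for $m \le n$, each of which is a closed condition in the Hausdorff product. Here I use that the segment-extraction map $\Lambda^n \to \Lambda^m$, $\lambda \mapsto \lambda(0,m)$, is continuous, which in turn follows from the fact that the continuous open bijection $\Lambda^m \times_c \Lambda^{n-m} \to \Lambda^n$ given by composition is a homeomorphism. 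I would then bound each coordinate of $\iota(Z(K))$ by a compact set: for $m \le n$, the coordinate lies in the continuous image of $K$ under segment extraction; for $m \ge n$, properness gives that $L \coloneqq r|_{\Lambda^{m-n}}^{-1}(s(K))$ is compact, so $K \Lambda^{m-n}$ is the continuous image of the compact set $(K \times L) \cap (\Lambda \times_c \Lambda)$ under composition; the general case is handled by combining these via $m \vee n$. Tychonoff's theorem then furnishes a compact product containing $\iota(Z(K))$, and since $\iota(Z(K))$ is closed there, it is compact.

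With the lemma in hand, local compactness is immediate: given $x \in \Lambda^\infty$, the local compactness of $\Lambda^0$ supplies a relatively compact open neighbourhood $U$ of $r(x)$, and then $Z(\overline{U})$ is a compact neighbourhood of $x$ containing the open set $Z(U)$. The main obstacle is the compactness lemma, and in particular controlling the coordinates $x(0,m)$ for indices $m$ not dominated by $n$; this is precisely where properness of $\Lambda$ is essential.
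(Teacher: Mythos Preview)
Your proof is correct and self-contained, but it takes a genuinely different route from the paper. The paper does not argue directly: it invokes Yeend's results on the boundary-path space, namely that $\Lambda^\infty$ coincides with $\partial\Lambda$ for proper source-free $\Lambda$, and then cites \cite[Proposition~3.8]{Yeend2007JOT} to obtain a locally compact Hausdorff topology generated by the basis $\{Z(U)\cap Z(F)^c : U\text{ precompact open},\ F\subseteq\overline{U}\Lambda\text{ compact}\}$. The remaining work in the paper is to show that this Yeend basis generates the same topology as your simpler basis $\{Z(U):U\subseteq\Lambda^n\text{ open}\}$, which is done by an explicit comparison argument using properness to eliminate the $Z(F)^c$ terms.

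Your approach instead builds everything from scratch via the projective-limit embedding $\iota\colon\Lambda^\infty\hookrightarrow\prod_m\Lambda^m$, and in doing so you essentially reprove along the way two results the paper states separately afterwards: the compactness of $Z(K)$ for compact $K$ (the paper's \autoref{lem: U compact implies Z(U) compact}, which again cites Yeend) and the continuity of the segment maps $\lambda\mapsto\lambda(0,m)$ (the paper's \autoref{lem: rho maps}). What your approach buys is independence from Yeend's groupoid machinery and a transparent explanation of why properness is exactly the hypothesis needed for local compactness; what the paper's approach buys is brevity and an explicit identification of this topology with the one already established in the literature on topological $k$-graph groupoids.
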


\begin{proof}
Since $\Lambda^\infty$ is the boundary-path space of $\Lambda$, we can apply \cite[Proposition~3.8]{Yeend2007JOT} to $\Lambda^\infty$, and it follows that $\{ Z(U) \cap Z(F)^c : \text{$U \subseteq \Lambda$ precompact open, and $F \subseteq \overline{U}\Lambda$ compact} \}$ is a basis for a locally compact Hausdorff topology on $\Lambda^\infty$. We claim that this basis generates the same topology as $\{ Z(V) : V \subseteq \Lambda \text{ open}\}$. To see this, let $V$ be an open subset of $\Lambda$ and fix $x \in Z(V)$. Let $n \in \N^k$ with $x(0,n) \in V$. Choose a precompact open subset $U$ of $V$ such that $x(0,n) \in U \subseteq \overline{U} \subseteq V$, and take $F = \emptyset$. Then we have $x \in Z(U) \cap Z(F)^c \subseteq Z(V)$. For the other direction, fix a precompact open subset $U$ of $\Lambda$ and a compact subset $F$ of $\overline{U}\Lambda$, and let $x \in Z(U) \cap Z(F)^c$. Let $n \in \N^k$ with $x(0,n) \in U$. Since $F$ is compact and the degree map is continuous, $d(F)$ is a finite subset of $\N^k$. Let $p \in \N^k$ be the coordinatewise maximum of all of the elements of $d(F)\cup\{n\}$, and define
\[
V \coloneqq (U \cap \Lambda^n)\Lambda^{p-n} \cap \Big( \bigcap_{m \in d(F)} (\Lambda^m \setminus F)\Lambda^{p-m}\Big).
\]
Then $V$ is open, and we have $x \in Z(V) \subseteq Z(U) \cap Z(F)^c$. So the claim holds. Now, for each $n \in \N^k$ and open subset $U$ of $\Lambda$, $Z(U \cap \Lambda^n)$ is an open subset of $\Lambda^\infty$, and so the result follows from the fact that $Z(U) = \cup_{n \in \N^k} Z(U \cap \Lambda^n)$.
\end{proof}

\begin{lemma} \label{lem: U compact implies Z(U) compact}
Let $\Lambda$ be a proper, source-free topological $k$-graph. Fix $n \in \N^k$, and let $U$ be a subset of $\Lambda^n$. If $U$ is compact, then $Z(U)$ is compact, and if $U$ is precompact, then $Z(U)$ is precompact.
\end{lemma}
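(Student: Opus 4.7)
The plan is to realise $\Lambda^\infty$ as a closed subspace of the Tychonoff product $\prod_{m \in \N^k} \Lambda^m$ via $\Phi(x) \coloneqq (x(0,m))_{m \in \N^k}$, and then show that $\Phi(Z(U))$ sits inside a product of compact sets. First I would verify that $\Phi$ is a topological embedding with closed image. Injectivity is immediate. Since the basic cylinder $Z(V)$ for $V$ open in $\Lambda^n$ is just $\Phi^{-1}(\pi_n^{-1}(V))$, where $\pi_n$ denotes the coordinate projection onto $\Lambda^n$, and since the argument in the proof of \autoref{prop: Lambda^infty is a LCH space} shows that the collection of cylinders is already closed under finite intersection, the topology on $\Lambda^\infty$ coincides with the subspace topology inherited from the product. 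Closedness of $\Phi(\Lambda^\infty)$ reduces to closedness of each compatibility set $\{(x_q)_q : x_m = x_p(0,m)\}$ for $m \le p$, which holds because the initial-segment map $\lambda \mapsto \lambda(0,m)$ from $\Lambda^p$ to $\Lambda^m$ is continuous: by the unique factorisation property combined with continuity and openness of composition, the composition map $\Lambda^m \times_c \Lambda^{p-m} \to \Lambda^p$ is a continuous open bijection and so a homeomorphism, and the initial-segment map is its inverse followed by the first-coordinate projection.

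Next, for each $m \in \N^k$ I would identify a compact set $K_m \subseteq \Lambda^m$ containing $\pi_m(\Phi(Z(U)))$. For $m \ge n$, set $K_m \coloneqq U\Lambda^{m-n}$: since $U$ is compact and $s$ is continuous, $s(U)$ is compact, and properness of $r|_{\Lambda^{m-n}}$ yields that $r|_{\Lambda^{m-n}}^{-1}(s(U))$ is compact; then $U\Lambda^{m-n}$ is the image under the continuous composition map of the fibre product $\{(\mu,\nu) \in U \times r|_{\Lambda^{m-n}}^{-1}(s(U)) : s(\mu) = r(\nu)\}$, which is closed in the compact product (by Hausdorffness) and hence compact. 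For arbitrary $m$, put $p \coloneqq m \vee n$ and $K_m \coloneqq \{\mu(0,m) : \mu \in K_p\}$, which is the continuous image of the compact set $K_p$ under the initial-segment map $\Lambda^p \to \Lambda^m$.

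Finally, $\Phi(Z(U)) = \Phi(\Lambda^\infty) \cap \pi_n^{-1}(U)$ is an intersection of two closed subsets of $\prod_m \Lambda^m$ (the second is closed because the compact set $U$ is closed in the Hausdorff space $\Lambda^n$), and it is contained in the Tychonoff-compact set $\prod_m K_m$; therefore $\Phi(Z(U))$ is compact, and so is $Z(U)$. The precompact assertion follows immediately by applying the compact case to the closure $\overline{U}$, which is compact in $\Lambda^n$ whenever $U$ is precompact. The main obstacle is the bookkeeping needed to set up the product embedding and to confirm both that the topologies coincide and that the image is closed; once that scaffolding is in place, properness of the range maps does all the essential work of bounding $Z(U)$ coordinate by coordinate.
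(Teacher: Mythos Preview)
Your argument is correct, but it takes a genuinely different route from the paper. The paper's proof is a two-line citation: it invokes \cite[Proposition~3.15]{Yeend2007JOT}, which says that for a compactly aligned topological $k$-graph the cylinder set over any compact set is compact, and then observes (via \cite[Remark~6.5]{Yeend2007JOT}) that proper implies compactly aligned. The precompact clause is then handled exactly as you do, by passing to $\overline{U}$.

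Your approach, by contrast, is self-contained: you realise $\Lambda^\infty$ as a closed subspace of $\prod_m \Lambda^m$, bound each coordinate of $\Phi(Z(U))$ by a compact set built from $U$ using properness of the range maps, and appeal to Tychonoff. This trades brevity for transparency: you avoid importing Yeend's machinery on the path space and the compactly-aligned hypothesis, and you make explicit where properness enters (in manufacturing the compact sets $K_m = U\Lambda^{m-n}$). One minor comment: your aside that the cylinders are ``already closed under finite intersection'' is not strictly needed---what you actually use is that the cylinder topology agrees with the subspace topology from the product, and this follows directly since each $Z(V)$ equals $\Phi^{-1}(\pi_n^{-1}(V))$ and conversely every subbasic product-open set restricts to a cylinder. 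You might also note that your continuity argument for the initial-segment map $\Lambda^p \to \Lambda^m$ anticipates the paper's \autoref{lem: rho maps}, which appears after this lemma; your inline justification via the homeomorphism $\Lambda^m \times_c \Lambda^{p-m} \cong \Lambda^p$ is therefore the right move here.
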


\begin{proof}
Since $\Lambda^\infty$ is the boundary-path space of $\Lambda$, we can apply \cite[Proposition~3.15]{Yeend2007JOT} to $\Lambda^\infty$. Since $\Lambda$ is compactly aligned, we have $Z(U)$ compact for every compact $U$. If $U$ is precompact, then $\overline{U}$ is compact, and so $Z(U)$ must be precompact because $\overline{Z(U)}$ is a closed subset of the compact set $Z(\overline{U})$.
\end{proof}

\begin{lemma} \label{lem: nice nbhds of infinite paths}
Let $\Lambda$ be a proper, source-free topological $k$-graph. For each $x \in \Lambda^\infty$ and $n \in \N^k$, there exists an open subset $U$ of $\Lambda^n$ such that $\overline{U}$ is a compact $s$-section and $Z(U)$ is an open subset of $x$.
\end{lemma}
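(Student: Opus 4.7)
The plan is to exploit three facts: (i) $\Lambda$ is locally compact Hausdorff, (ii) $s\colon \Lambda \to \Lambda^0$ is a local homeomorphism, and (iii) $\Lambda^n$ is clopen (since $d$ is continuous with discrete codomain). Note that the statement should read ``$Z(U)$ is an open neighbourhood of $x$,'' and I will prove it as such.

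First I would fix $x \in \Lambda^\infty$ and $n \in \N^k$, and set $\lambda \coloneqq x(0,n) \in \Lambda^n$. Because $s$ is a local homeomorphism, there exists an open neighbourhood $W_0$ of $\lambda$ in $\Lambda$ on which $s|_{W_0}\colon W_0 \to s(W_0)$ is a homeomorphism. Replacing $W_0$ by $W \coloneqq W_0 \cap \Lambda^n$, which is open since $\Lambda^n$ is clopen, I obtain an open subset $W$ of $\Lambda^n$ containing $\lambda$ on which $s$ restricts to a homeomorphism; that is, $W$ is itself an open $s$-section.

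Next I would use that $\Lambda$ is locally compact Hausdorff to find an open neighbourhood $U$ of $\lambda$ with $\overline{U}$ compact and $\overline{U} \subseteq W$; such a $U$ exists because $W$ is an open neighbourhood of $\lambda$ in the locally compact Hausdorff space $\Lambda$. Then $U \subseteq W \subseteq \Lambda^n$ is open in $\Lambda^n$, and $\overline{U}$ is a compact set contained in the open $s$-section $W$, so by \autoref{def: s-section} (extended to topological $k$-graphs in \autoref{def: top k-graph}), $\overline{U}$ is itself a compact $s$-section.

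Finally I would observe that since $x(0,n) = \lambda \in U$, we have $x \in Z(U)$, and by \autoref{prop: Lambda^infty is a LCH space}, $Z(U)$ is a basic open set in $\Lambda^\infty$. Hence $Z(U)$ is an open neighbourhood of $x$ as required. The argument is essentially a direct combination of the local-homeomorphism property of $s$ with local compactness; there is no real obstacle, only the minor bookkeeping of ensuring that the shrinking to a precompact neighbourhood keeps us inside the $s$-section $W$, which is immediate since we shrink strictly within $W$.
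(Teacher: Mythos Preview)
Your proof is correct and follows essentially the same approach as the paper: find an open $s$-section in $\Lambda^n$ containing $x(0,n)$, then use local compactness to shrink to a precompact open $U$ whose closure stays inside the $s$-section. You are also right that the statement contains a typo and should read ``open neighbourhood of $x$.''
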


\begin{proof}
Fix $x \in \Lambda^\infty$ and $n \in \N^k$. Let $V \subseteq \Lambda^n$ be an open $s$-section containing $x(0,n)$. Since $\Lambda$ is a locally compact Hausdorff space, there is an open subset $U$ of $V$ such that $U$ is precompact, and $x(0,n) \in U \subseteq \overline{U} \subseteq {V}$. Hence $\overline{U}$ is a compact $s$-section, and $x \in Z(U)$.
\end{proof}

We now introduce several maps that will regularly be used throughout this paper.

\begin{lemma} \label{lem: tau maps}
Let $\Lambda$ be a proper, source-free topological $k$-graph. Fix $m, n \in \N^k$ with $m \le n$. The map $\tau_{m,n}\colon \Lambda^n \to \Lambda^{n-m}$ defined by $\tau_{m,n}(\lambda) \coloneqq \lambda(m,n)$ is a local homeomorphism.
\end{lemma}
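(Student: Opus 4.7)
The plan is to fix $\lambda \in \Lambda^n$, set $\alpha \coloneqq \lambda(0,m)$ and $\beta \coloneqq \lambda(m,n)$, and build an explicit neighborhood of $\lambda$ on which $\tau_{m,n}$ restricts to a homeomorphism. The idea is that $\tau_{m,n}$ factors through the factorisation map $\mu \mapsto (\mu(0,m), \mu(m,n))$, followed by the second-coordinate projection; the first factor will be a homeomorphism on any product of open $s$-sections (via unique factorisation), and the second factor will be a local homeomorphism because we have chosen the $\Lambda^m$-factor to be an $s$-section.

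First I would use that $s$ is a local homeomorphism on $\Lambda$, together with the fact that $\Lambda^m$ and $\Lambda^{n-m}$ are clopen in $\Lambda$, to pick open $s$-sections $A \subseteq \Lambda^m$ containing $\alpha$ and $B \subseteq \Lambda^{n-m}$ containing $\beta$. Since $A$ and $B$ are open in $\Lambda$, the set $A \times_c B$ is open in $\Lambda \times_c \Lambda$. I would then check that the restricted composition map $\circ \colon A \times_c B \to \Lambda^n$ is a homeomorphism onto an open subset of $\Lambda^n$ containing $\lambda$: injectivity is the unique factorisation property (axiom (v)); continuity and openness are axiom (iii) of \autoref{def: top k-graph}; and the image lands in $\Lambda^n$ because $d$ is a functor.

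Next I would define $B' \coloneqq \{\mu_2 \in B : r(\mu_2) \in s(A)\}$, which is open in $\Lambda^{n-m}$ because $s(A)$ is open in $\Lambda^0$ (as $s|_A$ is a homeomorphism onto its image) and $r$ is continuous. The second-coordinate projection $\pi_2 \colon A \times_c B \to B'$ is surjective (for each $\mu_2 \in B'$, the element $(s|_A)^{-1}(r(\mu_2))$ of $A$ pairs compatibly with $\mu_2$) and injective (again because $s|_A$ is injective). Its inverse $\mu_2 \mapsto \bigl((s|_A)^{-1}(r(\mu_2)), \mu_2\bigr)$ is continuous since $(s|_A)^{-1}$ and $r$ are continuous, so $\pi_2$ restricts to a homeomorphism $A \times_c B \to B'$.

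Composing the inverse of $\circ|_{A \times_c B}$ with $\pi_2$ recovers $\tau_{m,n}$ on $\circ(A \times_c B)$ by unique factorisation, exhibiting $\tau_{m,n}$ as a homeomorphism from the open neighborhood $\circ(A \times_c B)$ of $\lambda$ onto the open set $B'$. The only slightly delicate point—and the step I would write out most carefully—is that $\pi_2$ really maps onto $B'$ and not some smaller set; but this is exactly the content of choosing $A$ to be an $s$-section, so once that is set up, the rest is diagram-chasing.
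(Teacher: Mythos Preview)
Your argument is correct, but it proceeds quite differently from the paper's proof. The paper first shows $\tau_{m,n}$ is globally continuous by observing that $\tau_{m,n}^{-1}(U) = \Lambda^m U$ is open (since composition is open); then, for local injectivity/openness, it takes an open $s$-section $V \subseteq \Lambda^n$ containing $\lambda$, shrinks to a precompact open $W$ with $\overline{W} \subseteq V$, checks that $\tau_{m,n}|_{\overline{W}}$ is injective (because $\tau_{m,n}$-agreement forces $s$-agreement, and $\overline{W}$ is an $s$-section), and finally invokes the ``continuous bijection from compact to Hausdorff is a homeomorphism'' trick to conclude. By contrast, you work with $s$-sections in $\Lambda^m$ and $\Lambda^{n-m}$ rather than in $\Lambda^n$, and you exhibit an explicit local inverse $\mu_2 \mapsto (s|_A)^{-1}(r(\mu_2))\,\mu_2$ by factoring $\tau_{m,n}$ as (inverse of composition) followed by (second projection). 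Your route is more constructive and avoids any appeal to local compactness or the compact-to-Hausdorff argument; the paper's route is shorter once global continuity is in hand, trading the explicit inverse for a soft compactness step. One small remark: you never actually use that $B$ is an $s$-section---only that $A$ is---so that hypothesis could be dropped.
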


\begin{proof}
We will begin by showing that $\tau_{m,n}$ is continuous. Let $U$ be an open subset of $\Lambda^{n-m}$. Then we have $\tau_{m,n}^{-1}(U) = \Lambda^m U$, which is an open subset of $\Lambda^n$ because composition is an open map. Hence $\tau_{m,n}$ is continuous.

Fix $\lambda \in \Lambda^n$. Let $V$ be an open $s$-section such that $\lambda \in V \subseteq \Lambda^n$. Since $\Lambda$ is a locally compact Hausdorff space, there exists an open subset $W$ of $V$ such that $\lambda \in W \subseteq \overline{W} \subseteq V$. We claim that $\tau_{m,n}|_{\overline{W}}$ is a homeomorphism. To see that $\tau_{m,n}|_{\overline{W}}$ is injective, suppose that for $\mu, \nu \in \overline{W}$, we have $\tau_{m,n}|_{\overline{W}}(\mu) = \tau_{m,n}|_{\overline{W}}(\nu)$. Then $\mu(m,n) = \nu(m,n)$, and so $s|_{\overline{W}}(\mu) = s|_{\overline{W}}(\nu)$. Hence we have $\mu = \nu$, because $\overline{W}$ is an $s$-section. Thus $\tau_{m,n}|_{\overline{W}}$ is injective. Since $\tau_{m,n}|_{\overline{W}}\colon \overline{W} \to \tau_{m,n}(\overline{W})$ is a continuous bijective map from a compact space to a Hausdorff space, it is a homeomorphism. Therefore, $\tau_{m,n}$ is a local homeomorphism, because it restricts to a homeomorphism on the open set $W$.
\end{proof}

\begin{lemma} \label{lem: rho maps}
Let $\Lambda$ be a proper, source-free topological $k$-graph. Fix $m, n \in \N^k$ with $m \le n$. The maps $\rho_{m,n}\colon \Lambda^n \to \Lambda^m$ and $\rho_{m,\infty}\colon \Lambda^\infty \to \Lambda^m$, given by $\rho_{m,n}(\lambda) \coloneqq \lambda(0,m)$ and $\rho_{m,\infty}(x) \coloneqq x(0,m)$, respectively, are both continuous and proper.
\end{lemma}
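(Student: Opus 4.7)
The plan is to handle both maps using the same template: continuity by computing the preimage of a basic open set using the unique factorisation property, and properness by exhibiting the preimage of a compact set as a continuous image of a compact space (or directly citing the tools already established).

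For $\rho_{m,n}$, continuity follows by noting that for any open $U\subseteq\Lambda^m$, unique factorisation gives $\rho_{m,n}^{-1}(U)=U\Lambda^{n-m}$, and this set is open in $\Lambda^n$ because composition in $\Lambda$ is an open map (axiom (iii) of \autoref{def: top k-graph}). For properness, fix a compact subset $K\subseteq\Lambda^m$; unique factorisation again gives $\rho_{m,n}^{-1}(K)=K\Lambda^{n-m}$. I would realise this as the image of a compact space under the composition map, as follows. Continuity of $s$ makes $s(K)\subseteq\Lambda^0$ compact, and properness of $\Lambda$ applied to $r|_{\Lambda^{n-m}}$ makes $s(K)\Lambda^{n-m}=r|_{\Lambda^{n-m}}^{-1}(s(K))$ compact. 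The fibred product $K\times_c\Lambda^{n-m}$ sits inside the compact product $K\times s(K)\Lambda^{n-m}$ (every composable pair $(\mu,\nu)$ with $\mu\in K$ satisfies $r(\nu)=s(\mu)\in s(K)$), and it is closed there because it is cut out by the continuous equation $s(\mu)=r(\nu)$. Hence $K\times_c\Lambda^{n-m}$ is compact, and composition—being continuous—sends it onto the compact set $K\Lambda^{n-m}$.

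For $\rho_{m,\infty}$, continuity is essentially built into the topology on $\Lambda^\infty$: for open $U\subseteq\Lambda^m$ we have $\rho_{m,\infty}^{-1}(U)=Z(U)$, which is a basic open set by \autoref{prop: Lambda^infty is a LCH space}. For properness, if $K\subseteq\Lambda^m$ is compact then $\rho_{m,\infty}^{-1}(K)=Z(K)$, and this is compact by the first assertion of \autoref{lem: U compact implies Z(U) compact}.

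The main point requiring care is the properness of $\rho_{m,n}$: one must not just argue that $K\Lambda^{n-m}$ is closed but actually produce it as a continuous image of a compact space, which is exactly where properness of $\Lambda$ (through compactness of $s(K)\Lambda^{n-m}$) enters. Everything else is routine once the preimages are identified via unique factorisation.
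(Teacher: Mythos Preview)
Your proof is correct and follows essentially the same approach as the paper: both identify $\rho_{m,n}^{-1}(U)=U\Lambda^{n-m}$ and $\rho_{m,\infty}^{-1}(U)=Z(U)$, invoke openness of composition for continuity, and use properness of $r|_{\Lambda^{n-m}}$ together with continuity of $s$ and of composition (plus \autoref{lem: U compact implies Z(U) compact} for the infinite case) for properness. The only difference is that you spell out the fibred-product step explicitly, whereas the paper compresses it into the single observation that $U\Lambda^{n-m}=U\,r|_{\Lambda^{n-m}}^{-1}(s(U))$ and that composition and $s$ are continuous.
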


\begin{proof}
Let $U$ be a subset of $\Lambda^m$. We first show that $\rho_{m,n}$ is continuous and proper. We have $\rho_{m,n}^{-1}(U) = U\Lambda^{n-m} = U r|_{\Lambda^{n-m}}^{-1}(s(U))$. If $U$ is open, then since $r|_{\Lambda^{n-m}}$ is continuous, and composition and $s$ are open maps, $\rho_{m,n}^{-1}(U)$ is an open subset of $\Lambda^n$. Hence $\rho_{m,n}$ is continuous. If $U$ is compact, then since $r|_{\Lambda^{n-m}}$ is a proper map, and composition and $s$ are continuous, $\rho_{m,n}^{-1}(U)$ is a compact subset of $\Lambda^n$. Hence $\rho_{m,n}$ is proper. We now show that $\rho_{m,\infty}$ is continuous and proper. We have $\rho_{m,\infty}^{-1}(U) = Z(U)$. If $U$ is open, then $Z(U)$ is open, and hence $\rho_{m,\infty}$ is continuous. If $U$ is compact, then \autoref{lem: U compact implies Z(U) compact} implies that $Z(U)$ is compact, and hence $\rho_{m,\infty}$ is proper.
\end{proof}

The following proposition introduces \emph{shift maps} on the infinite-path space of a proper, source-free topological $k$-graph. Shift maps on the path space of a topological $k$-graph were introduced in \cite[Lemma~3.3]{Yeend2007JOT}, and it is clear from this definition that if the domain of each shift map is restricted to the infinite-path space, then the ranges of these maps will also be in the infinite-path space.

\begin{prop} \label{prop: shift maps}
Let $\Lambda$ be a proper, source-free topological $k$-graph. For each $p \in \N^k$, there is a local homeomorphism $T^p\colon \Lambda^\infty \to \Lambda^\infty$ such that $T^p(x)(m,n) = x(m+p,n+p)$, for all $(m,n) \in \Mor(\Omega_k)$. If $p \in \N^k$ and $U$ is an $s$-section contained in $\Lambda^p$, then $T^p|_{Z(U)}$ is injective.
\end{prop}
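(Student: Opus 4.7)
The plan is to define $T^p$ by the given formula, verify that it is a well-defined continuous map $\Lambda^\infty \to \Lambda^\infty$, establish the injectivity claim on $Z(U)$ for any $s$-section $U \subseteq \Lambda^p$, and finally deduce the local homeomorphism property by combining injectivity with a separate openness argument. For well-definedness, a routine check confirms that for each $x \in \Lambda^\infty$ the assignment $(m,n) \mapsto x(m+p, n+p)$ is a $k$-graph morphism $\Omega_k \to \Lambda$. For continuity, given a basic open set $Z(V) \subseteq \Lambda^\infty$ with $V$ open in $\Lambda^n$, I would establish
\[
(T^p)^{-1}(Z(V)) = Z\bigl(\tau_{p,p+n}^{-1}(V)\bigr)
\]
by observing that $T^p(x) \in Z(V)$ if and only if $x(p, p+n) = \tau_{p,p+n}(x(0, p+n)) \in V$; this preimage is open because $\tau_{p,p+n}$ is continuous by \autoref{lem: tau maps}.

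For the injectivity claim, fix an $s$-section $U \subseteq \Lambda^p$ contained in an open $U_0 \subseteq \Lambda^p$ on which $s$ restricts to a homeomorphism, and suppose $T^p(x) = T^p(y)$ for some $x,y \in Z(U)$. Evaluating the shift identity at $(0,0)$ yields $s(x(0,p)) = x(p,p) = y(p,p) = s(y(0,p))$, and since $x(0,p), y(0,p) \in U \subseteq U_0$, injectivity of $s|_{U_0}$ forces $x(0,p) = y(0,p)$. Evaluating at $(0,n)$ then gives $x(p, p+n) = y(p, p+n)$ for every $n \in \N^k$, and applying the unique factorisation property at level $n \vee p$ forces $x(0,n) = y(0,n)$ for every $n \in \N^k$, whence $x = y$.

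To finish, I would first show globally that $T^p$ is an open map: given a basic open $Z(V)$ with $V$ open in $\Lambda^q$, replacing $V$ by $V\Lambda^{(p \vee q)-q}$ (open in $\Lambda^{p \vee q}$ since composition is open, and with the same cylinder set) reduces to the case $q \geq p$, and then I would establish $T^p(Z(V)) = Z(\tau_{p,q}(V))$. The forward inclusion is immediate from $T^p(x)(0, q-p) = x(p,q) = \tau_{p,q}(x(0,q))$; the reverse inclusion requires, for each $y$ in the right-hand side, lifting $y$ to a preimage $x \in Z(V)$ by selecting $\lambda \in V$ with $\lambda(p,q) = y(0, q-p)$ and defining $x$ via $x(0, r) \coloneqq \lambda \cdot y(q-p, r-p)$ for $r \geq q$ (extending via factorisation elsewhere). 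This reverse inclusion, which uses source-freeness and the factorisation properties of $k$-graph morphisms, is the main technical obstacle. Since $\tau_{p,q}$ is a local homeomorphism and therefore an open map (\autoref{lem: tau maps}), $Z(\tau_{p,q}(V))$ is open, so $T^p$ is open globally. Finally, \autoref{lem: nice nbhds of infinite paths} furnishes, for each $x \in \Lambda^\infty$, an open neighbourhood $Z(W)$ of $x$ with $\overline{W}$ a compact $s$-section in $\Lambda^p$; the injectivity claim applied to $W$ then makes $T^p|_{Z(W)}$ a continuous open injection onto an open set, hence a homeomorphism, completing the proof that $T^p$ is a local homeomorphism.
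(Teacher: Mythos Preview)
Your proposal is correct and follows essentially the same approach as the paper: both arguments use the $\tau$ maps from \autoref{lem: tau maps} to show openness, composition with $\Lambda^p$ (equivalently, $\tau_{p,p+n}^{-1}$) for continuity, injectivity of $s$ on an $s$-section for the injectivity claim, and \autoref{lem: nice nbhds of infinite paths} to produce the local neighbourhood. The only organisational difference is that you establish continuity and openness of $T^p$ globally and then restrict, whereas the paper fixes $U$ first and proves continuity and openness of $T^p|_{Z(U)}$ directly onto its image $Z(s(U))$; your global openness identity $T^p(Z(V)) = Z(\tau_{p,q}(V))$ (after reducing to $q \ge p$) is a clean reformulation of the paper's computation, which carries the extra intersection with $U\Lambda^{(p\vee n_\beta)-p}$ precisely because it works relative to $Z(U)$.
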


\begin{proof}
Fix $p \in \N^k$, and $x \in \Lambda^\infty$. A straightforward argument shows that $T^p(x)\colon \Omega_k \to \Lambda$ is a $k$-graph morphism, and so $T^p(x) \in \Lambda^\infty$.

We need to show that $T^p$ is a local homeomorphism. By \autoref{lem: nice nbhds of infinite paths}, we can choose an open subset $U$ of $\Lambda^p$ such that $\overline{U}$ is a compact $s$-section and $x \in Z(U)$. We claim that $T^p|_{Z(U)}$ is a homeomorphism onto $T^p(Z(U))$. The injectivity of $T^p|_{Z(U)}$ follows from the injectivity of $s|_U$. For continuity, let $\VV$ be an open subset of $T^p(Z(U)) = Z(s(U))$. Write $\VV = \bigcup_{\alpha \in I} Z(V_\alpha)$ with each $V_\alpha$ a nonempty open subset of $\Lambda^{m_\alpha}$, for some $m_\alpha \in \N^k$. A straightforward calculation shows that 
\[
\left(T^p|_{Z(U)}\right)^{-1}\!(\VV) = \bigcup_{\alpha \in I} \left(Z(\Lambda^p\, V_\alpha) \cap Z(U)\right).
\]
Since composition is an open map, $\Lambda^p\, V_\alpha$ is an open subset of $\Lambda^{p+m_\alpha}$. Thus, for each $\alpha \in I$, $Z(\Lambda^p\, V_\alpha) \cap Z(U)$ is an open subset of $Z(U)$. Hence $\left(T^p|_{Z(U)}\right)^{-1}\!(\VV)$ is open in $Z(U)$, and so $T^p|_{Z(U)}$ is continuous.

To see that $T^p|_{Z(U)}$ is an open map, let $\WW$ be an open subset of $Z(U)$. Write $\WW= \bigcup_{\beta \in J} Z(W_\beta)$ with each $W_\beta$ a nonempty open subset of $\Lambda^{n_\beta}$, for some $n_\beta \in \N^k$. We use the maps $\tau_{p,(p \vee n_\beta)}$ from \autoref{lem: tau maps} to write
\begin{align*}
T^p|_{Z(U)}(\WW) &= \bigcup_{\beta \in J} T^p|_{Z(U)} \left(Z(W_\beta)\right) \\
&= \bigcup_{\beta \in J} T^p|_{Z(U)} \left(Z\!\left((W_\beta\, \Lambda^{(p \vee n_\beta) - n_\beta})\, \cap\, (U \Lambda^{(p \vee n_\beta) - p})\right)\right) \\
&= \bigcup_{\beta \in J} Z\!\left(\tau_{p, (p \vee n_\beta)}\!\left((W_\beta\, \Lambda^{(p \vee n_\beta) - n_\beta})\, \cap\, (U \Lambda^{(p \vee n_\beta) - p})\right)\right)\!.
\end{align*}
Since composition and the $\tau_{p,(p \vee n_\beta)}$ maps are open, it follows that $T^p|_{Z(U)}(\WW)$ is an open subset of $T^p(Z(U))$, and hence $T^p|_{Z(U)}$ is an open map. Therefore, $T^p$ is a local homeomorphism.
\end{proof}

\begin{remark}
Let $\Lambda$ be a proper, source-free topological $k$-graph. The same argument used to prove \cite[Proposition~2.3]{KP2000} shows that for all $x \in \Lambda^\infty$ and $\lambda \in \Lambda r(x)$, there is a unique $y \in \Lambda^\infty$ such that $x = T^{d(\lambda)}(y)$ and $\lambda = y(0,d(\lambda))$, and we write $y = \lambda x$.
\end{remark}

\subsection{\texorpdfstring{$C^*$-correspondences}{C*-correspondences} from topological higher-rank graphs} \label{subsec: corrs from top k-graphs}

To each proper, source-free topological $k$-graph we associate two families of topological graphs; one using finite paths in the graph, and the other using infinite paths.

\begin{lemma} \label{lem: top graphs from top k-graph}
Let $\Lambda$ be a proper, source-free topological $k$-graph, and $n\in \N^k$. Then $\Lambda_n \coloneqq (\Lambda^0,\Lambda^n,r|_{\Lambda^n},s|_{\Lambda^n})$ and $\Lambda_{\infty,n} \coloneqq (\Lambda^\infty,\Lambda^\infty,T^0,T^n)$ are topological graphs, with associated topological graph correspondences $X_n\coloneqq X(\Lambda_n)$ and $Y_n\coloneqq X(\Lambda_{\infty,n})$. The homomorphisms implementing the left actions, $\phi_{X_n}\colon C_0(\Lambda^0)\to \LL(X_n)$ and $\phi_{Y_n}\colon C_0(\Lambda^\infty)\to \LL(Y_n)$, are both injective and have range in the compact operators.
\end{lemma}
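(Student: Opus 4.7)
The plan is to verify, in order: the topological graph axioms for $\Lambda_n$ and $\Lambda_{\infty,n}$, the injectivity of the two left actions, and then (the main technical step) the fact that their images lie in $\KK(X_n)$ and $\KK(Y_n)$ respectively.

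For the topological graph axioms on $\Lambda_n$: since $d$ is continuous and $\N^k$ has the discrete topology, $\Lambda^n$ is clopen in $\Lambda$ and so inherits second-countable local compactness and Hausdorffness, while $r|_{\Lambda^n}$ and $s|_{\Lambda^n}$ automatically satisfy the continuity and local-homeomorphism requirements as restrictions of the corresponding maps on $\Lambda$. For $\Lambda_{\infty,n}$: the space $\Lambda^\infty$ is locally compact Hausdorff by \autoref{prop: Lambda^infty is a LCH space}, $T^0$ is the identity, and $T^n$ is a local homeomorphism by \autoref{prop: shift maps}. For injectivity of $\phi_{X_n}$, given $a$ with $a(v_0) \ne 0$, source-freeness (\autoref{rems: about proper and s-f}(i)) supplies some $\lambda_0 \in v_0\Lambda^n$, and any $f \in C_c(\Lambda^n)$ with $f(\lambda_0) \ne 0$ yields $(a \cdot f)(\lambda_0) = a(v_0)f(\lambda_0) \ne 0$. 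Since $T^0 = \id$, the left action on $Y_n$ is pointwise multiplication, so injectivity of $\phi_{Y_n}$ is immediate.

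For the compact-range statement, by density in the norm of $C_0(\Lambda^0)$ it suffices to treat $a \in C_c(\Lambda^0)$. Set $K \coloneqq \supp(a)$ and $K' \coloneqq r|_{\Lambda^n}^{-1}(K)$; by properness $K'$ is compact. Using that $s|_{\Lambda^n}$ is a local homeomorphism and $\Lambda^n$ is locally compact Hausdorff, cover $K'$ by finitely many precompact open $s$-sections $U_1, \dots, U_m$, fix a partition of unity $\{\psi_i\} \subseteq C_c(\Lambda^n)$ subordinate to this cover with $\sum_i \psi_i \equiv 1$ on $K'$, and pick $\chi_i \in C_c(U_i)$ with $\chi_i \equiv 1$ on $\supp(\psi_i)$. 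Setting $f_i(\lambda) \coloneqq \psi_i(\lambda)\, a(r(\lambda))$ and $g_i \coloneqq \chi_i$ (both in $F_{\Lambda_n}$ by \autoref{notation: F_E}), the inner-product formula for $\Theta_{f_i,g_i}$ collapses—because $g_i$ is supported in the $s$-section $U_i$—to multiplication by $\psi_i(\lambda)\, a(r(\lambda))$ on $U_i$ and zero elsewhere. Summing over $i$ and using $\sum_i \psi_i \equiv 1$ on $K'$ together with $a \circ r \equiv 0$ off $K'$, one obtains $\phi_{X_n}(a) = \sum_{i=1}^m \Theta_{f_i,g_i} \in \KK(X_n)$. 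The argument for $\phi_{Y_n}$ is structurally identical: properness of the range map $T^0 = \id$ is automatic, and the analogues of $s$-sections in $\Lambda^n$ are cylinder sets $Z(V)$ with $\overline{V}$ a compact $s$-section in $\Lambda^n$, which are $T^n$-sections by \autoref{prop: shift maps} and form a local basis at every point of $\Lambda^\infty$ by \autoref{lem: nice nbhds of infinite paths}.

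The main obstacle is the bookkeeping in this last step: one must arrange the partition of unity and the section data so that the inner-product sum defining each $\Theta_{f_i,g_i}$ collapses to a single pointwise term, and so that these rank-one operators sum precisely to $\phi(a)$. Everything else is a routine check against \autoref{def: top k-graph} and the definitions of $X_n$ and $Y_n$.
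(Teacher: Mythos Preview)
Your argument is correct. The difference from the paper is entirely in how the left-action claims are handled: the paper dispatches both injectivity and compact range in one line by quoting \cite[Proposition~1.24]{Katsura2004TAMS}, which characterises these properties for a topological-graph correspondence $X(E)$ in terms of the vertex sets $E^0_{\sce}$ and $E^0_{\fin}$. Source-freeness gives $(\Lambda^0)_{\sce}=\emptyset$ and properness gives $(\Lambda^0)_{\fin}=\Lambda^0$, while for $\Lambda_{\infty,n}$ both conclusions are immediate from $T^0=\id$. You instead reprove the relevant instance of Katsura's result by hand: the partition-of-unity construction you give for $\phi_{X_n}(a)=\sum_i\Theta_{f_i,g_i}$ is exactly the kind of argument underlying \cite[Proposition~1.24]{Katsura2004TAMS}, and indeed the paper itself carries out the same computation later (see \autoref{eq: left action in X} and \autoref{eq: left action in Y} in the proof of \autoref{thm: main theorem}~\autoref{item: thm zeta and the O map}). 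Your route is more self-contained and makes the mechanism visible; the paper's is shorter and avoids duplicating work already in the literature.
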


\begin{proof}
Since $d$ is continuous, $\Lambda^0$ and $\Lambda^n$ are open subsets of $\Lambda$, and hence are locally compact and Hausdorff. Standard arguments show that $r|_{\Lambda^n}$ and $s|_{\Lambda^n}$ are continuous, and that $s|_{\Lambda^n}$ is a local homeomorphism. Thus each $\Lambda_n$ is a topological graph. 

We know from \autoref{prop: Lambda^infty is a LCH space} that $\Lambda^\infty$ is a locally compact Hausdorff space. The map $T^0$ is just the identity map, and hence is continuous. We know from \autoref{prop: shift maps} that each $T^n$ is a local homeomorphism. Thus each $\Lambda_{\infty, n}$ is a topological graph.

For the claims about the left actions, we use \cite[Proposition~1.24]{Katsura2004TAMS}. Since $\Lambda$ is source-free, we have $(\Lambda^0)_{\sce}=\emptyset$, and hence $\phi_{X_n}$ is injective. Since $\Lambda$ is proper, we have $(\Lambda^0)_{\fin}=\Lambda^0$, and hence the image of each $\phi_{X_n}$ is $\KK(X_n)$. Since the range map of each $\Lambda_{\infty,n}$ is the identity, we obviously have $(\Lambda^\infty)_{\sce}=\emptyset$ and $(\Lambda^\infty)_{\fin}=\Lambda^\infty$, and hence each $\phi_{Y_n}$ is injective and has range equal to $\KK(Y_n)$. 
\end{proof}

\subsection{Cocycles and examples} \label{subsec: cocycles and examples}

The following definition is inspired by the cohomology theory for $k$-graphs developed in \cite{KPS2015TAMS, SWW2014}. 

\begin{definition} \label{def: twist}
Fix $k \in \N {\setminus} \{0\}$ and let $\Lambda$ be a topological $k$-graph. A \emph{continuous $\T$-valued $2$-cocycle} on $\Lambda$ is a continuous map $c\colon \Lambda \times_c \Lambda \to \T$ satisfying
\begin{enumerate}[label=(C\arabic*)]
\item \label{item: twist_identity} $c(\lambda, \mu) c(\lambda \mu, \nu) = c(\lambda, \mu \nu) c(\mu, \nu)$, whenever $s(\lambda) = r(\mu)$ and $s(\mu) = r(\nu)$; and
\item \label{item: twist_one} $c(\lambda, s(\lambda)) = c(r(\lambda), \lambda) = 1$, for all $\lambda \in \Lambda$.
\end{enumerate}
We call \autoref{item: twist_identity} the \emph{$2$-cocycle identity}, and we say that $c$ is \emph{normalised} because it satisfies \autoref{item: twist_one}.
\end{definition}

\begin{example} \label{eg: FPS graphs}
We begin our discussion on examples with a new class of topological higher-rank graphs, which are a topological analogue of the $(k+l)$-graphs from \cite{FPS2009} coming from actions of $\Z^l$ on $k$-graphs. Fix $k, l \in \N {\setminus} \{0\}$. Let $\Lambda$ be a topological $k$-graph, and $\beta$ an action of $\Z^l$ by automorphisms of $\Lambda$. For $p \in \N^k$ and $m \in \N^l$, we write $(p,m)$ for $(p_1,\dotsc,p_k,m_1,\dotsc,m_l) \in \N^{k+l}$.

We wish to define a topological $(k+l)$-graph $\Gamma \coloneqq \Lambda \times_\beta \Z^l$. We define $\Obj(\Gamma) \coloneqq \Lambda^0 \times \{0\}$, and $\Mor(\Gamma) \coloneqq \Lambda \times \N^l$, giving both the product topology. The range and source maps are given by $r(\mu,m) \coloneqq (r_\Lambda(\mu),0)$ and $s(\mu,m) \coloneqq (\beta_{-m}(s_\Lambda(\mu)),0)$, respectively; composition is given by $(\mu,m)(\nu,n) \coloneqq (\mu\beta_m(\nu),m+n)$, for all $\mu, \nu \in \Lambda$ such that $s_\Lambda(\mu)=r_\Lambda(\beta_m(\nu))$; and the degree map is given by $d(\mu,m) \coloneqq (d_\Lambda(\mu),m)$. We claim that $\Gamma$ is a topological $(k+l)$-graph.

The set of objects and morphisms of $\Gamma$ are second-countable, locally compact Hausdorff spaces, being the product of such spaces. It is straightforward to show that the range map is continuous and the source map is open and continuous. The source map is locally injective because for each $(\mu,m) \in \Gamma$, $s$ is injective on $V \times \{m\}$, where $V$ is an open $s_\Lambda$-section in $\Lambda$ containing $\mu$. Hence $s$ is a local homeomorphism.

To see that composition is open, let $U$ and $V$ be open subsets of $\Lambda$, and $m,n \in \N^l$. Then 
\[
\circ\Big((\Gamma \times_c \Gamma)\, \cap\, \big((U \times \{m\}) \times (V \times \{n\} )\big)\Big) = \circ_\Lambda \Big((\Lambda \times_c \Lambda)\, \cap\, (U \times \beta_m(V)) \Big) \times \{m+n\},
\]
which is open, because $\circ_\Lambda$ is an open map. To see that composition is continuous, let $U$ be an open subset of $\Lambda$, and $p \in \N^l$. Fix $m \in \N^l$ with $m \le p$. We know that $\id \times \beta_{-m}\colon \Lambda \times_c \Lambda \to \Lambda \times \Lambda$ is an open map, as is the map $g_{m,p-m}\colon \Lambda \times \Lambda \to \Gamma \times \Gamma$ given by $g_{m,p-m}(\mu,\nu) \coloneqq ((\mu,m),(\nu,p-m))$. We have
\[
\circ^{-1}(U \times \{p\}) = (\Gamma \times_c \Gamma)\, \cap\, \Big( \bigcup_{\substack{m \in \N^l, \\ m \le p}} \big(g_{m,p-m} \circ (\id \times \beta_{-m})\big)\big(\!\circ_\Lambda^{-1}(U)\big) \Big),
\]
which is open, because $\circ_\Lambda$ is continuous, and each $g_{m,p-m} \circ (\id \times \beta_{-m})$ is an open map. Hence composition is continuous. To see that the degree map is continuous, let $p \in \N^k$ and $m \in \N^l$. We have $d^{-1}(p,m) = d_\Lambda^{-1}(p) \times \{m\}$, which is open because $d_\Lambda$ is continuous. Hence $d$ is continuous. Finally, the unique factorisation property follows from the arguments on the unique factorisation property from \cite{FPS2009} (as this property does not involve the topology of $\Gamma$). Thus $\Gamma$ is a topological $(k+l)$-graph.

We claim that if $\Lambda$ is proper and source-free, then $\Gamma$ is proper and source-free. Suppose that $\Lambda$ is source-free. Let $v \in \Lambda^0$, and $i \in \{1,\dotsc,k+l\}$. If $1 \le i \le k$, then $(v,0)\Gamma^{e_i} = v\Lambda^{e_i} \times \{0\} \ne \emptyset$. If $k+1 \le i \le k+l$, then $(v,0)\Gamma^{e_i} = \{ (v,e_{i-k}) \} \ne \emptyset$. Hence $\Gamma$ is source-free. Now suppose that $\Lambda$ is proper. Let $W$ be a compact subset of $\Gamma^0$, $p \in \N^k$, and $m \in \N^l$. Then there is a compact subset $K$ of $\Lambda^0$ such that $W = K \times \{0\}$. We have 
\[
r|_{\Gamma^{(p,m)}}^{-1}(W) = r_\Lambda|_{\Lambda^p}^{-1}(K) \times \{m\},
\]
which is compact. Hence $\Gamma$ is proper.

We can construct a number of continuous $\T$-valued $2$-cocycles on $\Gamma$. For each $q \in \N{\setminus}\{0\}$ and $m \in \N^q$, we define $\lav m \rav \coloneqq \sum_{i=1}^q m_i$. Suppose that $f\colon \Lambda \to \T$ is a continuous functor such that $f \circ \beta_m = f$ for all $m \in \Z^l$. For an example of such a continuous functor, take $f$ given by $f(\mu) \coloneqq e^{i \lav d(\mu) \rav}$. We define $c_f$ on $\Gamma \times_c \Gamma$ by
\[
c_f((\mu,m),(\nu,n)) \coloneqq f(\nu)^{\lav m \rav}.
\]
We first check the cocycle identities. For \autoref{item: twist_identity}, let $\big((\mu,m),(\nu,n)\big),\,\big((\nu,n),(\lambda,p)\big) \in \Gamma \times_c \Gamma$, we have
\begin{align*}
c_f\big( (\mu,m),(\nu,n) \big) c_f\big( (\mu\beta_m(\nu),m+n), (\lambda,p) \big) &= f(\nu)^{\lav m \rav}f(\lambda)^{\lav m+n \rav} \\
&= f(\nu\beta_n(\lambda))^{\lav m \rav}f(\lambda)^{\lav n \rav} \\
&= c_f\big( (\mu,m),(\nu\beta_n(\lambda),n+p) \big) c_f\big( (\nu,n),(\lambda,p) \big),
\end{align*}
which is \autoref{item: twist_identity}. For each $(\mu,m) \in \Gamma$, we use that $f|_{\Lambda^0} \equiv 1$ to get 
\[
c_f\big( (\mu,m),(\beta_{-m}(s_{\Lambda}(\mu)),0) \big) = f(s_\Lambda(\mu))^{\lav m \rav}=1 = f(\mu)^0 = c_f\big( (r_\Lambda(\mu), 0), (\mu,m) \big).
\]
Hence \autoref{item: twist_one} is satisfied. The continuity of $c_f$ follows from the continuity of $f$, and hence $c_f$ is a continuous $\T$-valued $2$-cocycle on $\Gamma$. 

Now suppose that $\omega \colon \N^l \to \T$ is a homomorphism. We define $c_\omega$ on $\Gamma \times_c \Gamma$ by 
\[
c_\omega((\mu,m),(\nu,n)) \coloneqq \omega(m)^{\lav d(\nu) \rav}.
\]
For \autoref{item: twist_identity}, let $\big((\mu,m),(\nu,n)\big),\,\big((\nu,n),(\lambda,p)\big) \in \Gamma \times_c \Gamma$. Then 
\begin{align*}
c_\omega\big( (\mu,m),(\nu,n) \big) c_\omega\big( (\mu\beta_m(\nu),m+n), (\lambda,p) \big) &= \omega(m)^{\lav d(\nu) \rav}\,\omega(m+n)^{\lav d(\lambda) \rav} \\
&= \omega(m)^{(\lav d(\nu) \rav + \lav d(\lambda) \rav)}\,\omega(n)^{\lav d(\lambda) \rav} \\
&= \omega(m)^{\lav d(\nu\beta_n(\lambda)) \rav}\,\omega(n)^{\lav d(\lambda) \rav} \\
&= c_\omega\big( (\mu,m),(\nu\beta_n(\lambda),n+p) \big) c_\omega\big( (\nu,n),(\lambda,p) \big),
\end{align*}
and hence \autoref{item: twist_identity} is satisfied. For each $(\mu,m) \in \Gamma$, we have
\[
c_\omega\big( (\mu,m),(\beta_{-m}(s_{\Lambda}(\mu)),0) \big) = \omega(m)^{\lav d(\beta_{-m}(s_{\Lambda}(\mu))) \rav}= 1 = \omega(0)^{\lav d(\mu)\rav} = c_f\big( (r_\Lambda(\mu), 0), (\mu,m) \big).
\]
Hence \autoref{item: twist_one} is satisfied. The continuity of $c_\omega$ follows from the continuity of the degree map, and hence $c_\omega$ is a continuous $\T$-valued $2$-cocycle on $\Gamma$. 

Finally, if we let $\sigma$ be any $\T$-valued $2$-cocycle on $\Z^l$, then it is straightforward to check that $c_\sigma$ defined on $\Gamma \times_c \Gamma$ by $c_\sigma\big((\mu,m),(\nu,n)\big) \coloneqq \sigma(m,n)$ is a continuous $\T$-valued $2$-cocycle on $\Gamma$.
\end{example}

\begin{example} \label{eg: skew product}
We recall the skew-product graphs from \cite[Definition~8.1]{Yeend2007JOT}. Let $\Lambda$ be a topological $k$-graph, $A$ a locally compact group, and $f \colon \Lambda \to A$ a continuous functor. Then we can form a topological $k$-graph $\Lambda \times_f A$, which has objects $\Lambda^0 \times A$ and morphisms $\Lambda \times A$, both with the product topology; range and source maps given by $r(\mu,a) \coloneqq (r(\mu),a)$ and $s(\mu,a) \coloneqq (s(\mu),af(\mu))$; composition given by $(\mu,a)(\nu,af(\mu)) \coloneqq (\mu\nu,a)$, for all $\mu,\nu \in \Lambda$ with $s(\mu) = r(\nu)$; and degree map given by $d(\mu,a) \coloneqq d(\mu)$. We claim that if $\Lambda$ is proper and source-free, then $\Lambda \times_f A$ is proper and source-free. For each $(v,a) \in \Lambda^0 \times A$, and $i \in \{1,\dotsc,k\}$, we have $(v,a)(\Lambda \times_f A)^{e_i} = v\Lambda^{e_i} \times \{a\}$, which is nonempty because $\Lambda$ is source-free. Hence $\Lambda \times_f A$ is source-free. Now suppose $W$ is a compact subset of $\Lambda^0 \times A$, and $m \in \N^k$. Then
\[
W(\Lambda \times_f A)^m = r|_{(\Lambda \times_f A)^m}^{-1}(W) \subseteq r|_{\Lambda^m}^{-1}(\pi_1(W)) \times \pi_2(W),
\]
where $\pi_1$ and $\pi_2$ are the projections onto $\Lambda$ and $A$, respectively. Since $\Lambda$ is proper, we know that $r|_{\Lambda^m}^{-1}(\pi_1(W))$ is compact. Now, $W$ is closed because it is a compact subset of a Hausdorff space, and so the continuity of $r|_{(\Lambda \times_f A)^m}$ implies that $W(\Lambda \times_f A)^m$ is a closed subset of the compact set $r|_{\Lambda^m}^{-1}(\pi_1(W)) \times \pi_2(W)$. Hence $W(\Lambda \times_f A)^m$ is compact, and so $\Lambda \times_f A$ is proper.

For any continuous $\T$-valued $2$-cocycle $c$ on $\Lambda$, there is a continuous $\T$-valued $2$-cocycle $\tilde{c}$ on $\Lambda \times_f A$ given by $\tilde{c}((\mu,a),(\nu,b)) \coloneqq c(\mu,\nu)$, for all $((\mu,a),(\nu,b)) \in (\Lambda \times_f A) \times_c (\Lambda \times_f A)$.
\end{example}

\begin{example} \label{eg: Cartesian product graphs}
Recall from \cite[Proposition~3.2.1]{Yeend2005Thesis} that, given a topological $k_1$-graph $\Lambda_1$, and a topological $k_2$-graph $\Lambda_2$, we can form the Cartesian product $\Lambda_1 \times \Lambda_2$, which is a topological $(k_1+k_2)$-graph under the product topology, with the obvious definitions of range, source, composition, and degree maps. We claim that if $\Lambda_1$ and $\Lambda_2$ are both proper and source-free, then so is $\Lambda_1 \times \Lambda_2$. Suppose that $(u,v) \in \Lambda_1^0 \times \Lambda_2^0$, and $i \in \{1,\dotsc,k_1+k_2\}$. If $1 \le i \le k_1$, then $(u,v)(\Lambda_1 \times \Lambda_2)^{e_i} = u\Lambda_1^{e_i} \times \{v\} \ne \emptyset$. If $k_1 +1 \le i \le k_1 + k_2$, then $(u,v)(\Lambda_1 \times \Lambda_2)^{e_i} = \{u\} \times v\Lambda_2^{e_{(i-k_1)}}\ne \emptyset$. Hence $\Lambda_1 \times \Lambda_2$ is source-free. To see that $\Lambda_1 \times \Lambda_2$ is proper, let $W$ be a compact subset of $\Lambda_1^0 \times \Lambda_2^0$, and let $m=(m_1,m_2) \in \N^{k_1+k_2}$, with each $m_i \in \N^{k_i}$. Since $W$ is a compact subset of a Hausdorff space, it is closed, and so the continuity of the range map implies that $(r_1 \times r_2)|_{(\Lambda_1 \times \Lambda_2)^m}^{-1}(W)$ is closed. Hence $(r_1 \times r_2)|_{(\Lambda_1 \times \Lambda_2)^m}^{-1}(W)$ is compact, because it is a closed subset of the compact set $r_1|_{\Lambda_1^{m_1}}^{-1}(\pi_1(W)) \times r_2|_{\Lambda_2^{m_2}}^{-1}(\pi_2(W))$. Thus $\Lambda_1 \times \Lambda_2$ is proper.

We claim that if $c_1$ and $c_2$ are continuous $\T$-valued $2$-cocycles on $\Lambda_1$ and $\Lambda_2$, respectively, then $c_1 \times c_2$ given by $(c_1 \times c_2)((\lambda_1,\mu_1),(\lambda_2,\mu_2)) \coloneqq c_1(\lambda_1,\lambda_2)c_2(\mu_1,\mu_2)$ is a continuous $\T$-valued $2$-cocycle on $\Lambda_1 \times \Lambda_2$. It is straightforward to see that $c_1 \times c_2$ is continuous. For \autoref{item: twist_identity}, let $((\lambda_1,\mu_1),(\lambda_2,\mu_2)),\, ((\lambda_2,\mu_2),(\lambda_3,\mu_3)) \in (\Lambda_1 \times \Lambda_2) \times_c (\Lambda_1 \times \Lambda_2)$. We have 
\begin{align*}
(c_1 \times c_2)((\lambda_1,\mu_1),(\lambda_2,\mu_2))&\,(c_1 \times c_2)((\lambda_1\lambda_2,\mu_1\mu_2),(\lambda_3,\mu_3)) \\
&= c_1(\lambda_1,\lambda_2)c_2(\mu_1,\mu_2)c_1(\lambda_1\lambda_2,\lambda_3)c_2(\mu_1\mu_2,\mu_3) \\
&= c_1(\lambda_1,\lambda_2\lambda_3)c_1(\lambda_2,\lambda_3)c_2(\mu_1,\mu_2\mu_3)c_2(\mu_2,\mu_3) \\
&= (c_1 \times c_2)((\lambda_1,\mu_1),(\lambda_2\lambda_3,\mu_2\mu_3))\,(c_1 \times c_2)((\lambda_2,\mu_2),(\lambda_3,\mu_3)),
\end{align*}
and hence \autoref{item: twist_identity} is satisfied. For each $(\lambda,\mu) \in \Lambda_1\times \Lambda_2$, we have 
\[
(c_1 \times c_2) ((\lambda,\mu),(s_1(\lambda),s_2(\mu)))=c_1(\lambda,s_1(\lambda))c_2(\mu,s_2(\mu))=1,
\]
and
\[
(c_1 \times c_2)((r_1(\lambda),r_2(\mu)),(\lambda,\mu)) = c_1(r_1(\lambda),\lambda)c_2(r_2(\mu),\mu)=1.
\]
Hence \autoref{item: twist_one} is satisfied, and so $c_1 \times c_2$ is a continuous $\T$-valued $2$-cocycle on $\Lambda_1 \times \Lambda_2$.
\end{example}

\vspace{0.5em}
\section{Product systems and twisted \texorpdfstring{$C^*$-algebras}{C*-algebras} associated to topological higher-rank graphs} \label{sec: product systems}

In this section we define the twisted Cuntz--Krieger algebra $C^*(\Lambda,c)$ and the twisted Toeplitz algebra $\TT C^*(\Lambda,c)$ associated to a proper, source-free topological $k$-graph $\Lambda$ and a continuous $\T$-valued $2$-cocycle $c$. We also state our main theorem. We start by associating two compactly aligned product systems to $\Lambda$ and $c$.

\subsection{The product systems} \label{subsec: product systems} We now introduce the product system built from finite paths in $\Lambda$. When $c$ is trivial, \autoref{prop: finite path product system} is exactly \cite[Proposition~5.9]{CLSV2011}, but for nontrivial cocycles, we do have to work a little harder to get the result. However, the proofs of \autoref{prop: finite path product system} and \autoref{prop: infinite path product system} follow similar arguments, so we will only provide the details of the proof of \autoref{prop: infinite path product system}. 

\begin{prop} \label{prop: finite path product system}
Suppose that $\Lambda$ is a proper, source-free topological $k$-graph, and $c$ is a continuous $\T$-valued $2$-cocycle on $\Lambda$. For each $n \in \N^k$, let $X_n$ be the $C_0(\Lambda^0)$-correspondence associated to the topological graph $\Lambda_n=(\Lambda^0, \Lambda^n, r|_{\Lambda^n}, s|_{\Lambda^n})$, as in \autoref{lem: top graphs from top k-graph}. For $f \in X_m$ and $g \in X_n$, define $fg\colon \Lambda^{m+n} \to \C$ by
\[
(fg)(\lambda) \coloneqq c(\lambda(0,m),\lambda(m,m+n)) f(\lambda(0,m)) g(\lambda(m,m+n)).
\]
Then $fg \in X_{m+n}$, and under this multiplication, the family
\[
X \coloneqq \bigsqcup_{n\in\N^k} X_n
\]
of $C_0(\Lambda^0)$-correspondences is a product system over $\N^k$.
\end{prop}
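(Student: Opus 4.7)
The plan is to leverage the untwisted version of this result, namely \cite[Proposition~5.9]{CLSV2011}, which (in our notation) produces an $A$-correspondence isomorphism $U^m_n\colon X_m \otimes_A X_n \cong X_{m+n}$ via the pointwise product $f \otimes g \mapsto \bigl(\lambda \mapsto f(\lambda(0,m))\,g(\lambda(m,m+n))\bigr)$, where $A = C_0(\Lambda^0)$. Since $c$ takes values in $\T$, the twisted multiplication should amount to postcomposing $U^m_n$ with a unitary bimodule automorphism determined by $c$.

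First I would show that the map $\lambda \mapsto \bigl(\lambda(0,m), \lambda(m,m+n)\bigr)$ from $\Lambda^{m+n}$ into $\Lambda \times_c \Lambda$ is continuous, which follows from \autoref{lem: rho maps} and \autoref{lem: tau maps}. Composing with $c$ then yields a continuous unimodular function $C_{m,n}\colon \Lambda^{m+n} \to \T$ given by $C_{m,n}(\lambda) \coloneqq c(\lambda(0,m), \lambda(m,m+n))$. Pointwise multiplication by $C_{m,n}$ is an $A$-$A$-bimodule automorphism of $X_{m+n}$ (scalar-valued functions commute with the multiplicative left and right actions) and preserves the $A$-valued inner product since $|C_{m,n}| \equiv 1$. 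The twisted product of the proposition is precisely $M_{C_{m,n}} \circ U^m_n$, so $fg \in X_{m+n}$, the multiplication is bilinear and $A$-balanced, and it extends to an $A$-correspondence isomorphism $X_m \otimes_A X_n \cong X_{m+n}$, establishing property (iii) of a product system.

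Property (ii), that $X_0 \cong {}_A A_A$, is independent of $c$ and is immediate from the construction of $X_0$ as the correspondence associated to $\Lambda_0 = (\Lambda^0, \Lambda^0, \id, \id)$. Property (iv) follows from the normalisation condition \autoref{item: twist_one}: for $a \in X_0 = A$ and $f \in X_n$, one has $c(r(\lambda), \lambda) = c(\lambda, s(\lambda)) = 1$, so the twisted products $af$ and $fa$ collapse to the bimodule actions $a \cdot f$ and $f \cdot a$. Finally, for $X$ to be a semigroup we need associativity $(fg)h = f(gh)$; applying the unique factorisation $\lambda = \alpha\beta\gamma$ with $\alpha \in \Lambda^m$, $\beta \in \Lambda^n$, $\gamma \in \Lambda^p$, both sides reduce to $f(\alpha)g(\beta)h(\gamma)$ times a pair of cocycle factors, and these factors agree precisely by the $2$-cocycle identity \autoref{item: twist_identity}. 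I expect the main obstacle to be articulating the comparison with the untwisted product system from \cite{CLSV2011} cleanly; once this is in place, the product system axioms are direct consequences of the two cocycle conditions on $c$.
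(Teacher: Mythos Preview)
Your proposal is correct and takes a genuinely different route from the paper. The paper does not prove \autoref{prop: finite path product system} directly; it instead proves the infinite-path analogue \autoref{prop: infinite path product system} by a direct computation (verifying the inner-product identity $\la f_1g_1, f_2g_2\ra_{Y_{m+n}} = \la g_1, \la f_1,f_2\ra_{Y_m}\cdot g_2\ra_{Y_n}$ to obtain isometry, and then a Stone--Weierstrass argument together with \cite[Lemma~1.26]{Katsura2004TAMS} for surjectivity, where the cocycle is handled by multiplying by $\overline{c(x(0,m),x(m,m+n))}$ to reduce to the untwisted span). It then says that \autoref{prop: finite path product system} follows by a similar argument. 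Your approach instead factors the twisted multiplication as the untwisted isomorphism $U^m_n$ of \cite[Proposition~5.9]{CLSV2011} followed by pointwise multiplication by the continuous unimodular function $C_{m,n}$, which is a unitary $A$-bimodule automorphism of $X_{m+n}$; this immediately yields that $f\otimes g\mapsto fg$ is an $A$-correspondence isomorphism without redoing the density argument. Your treatment of properties (ii), (iv), and associativity via \autoref{item: twist_one} and \autoref{item: twist_identity} matches what the paper subsumes under ``standard calculations.'' The paper's approach has the advantage of being self-contained and applying uniformly to both the finite- and infinite-path product systems, while your approach is shorter and more conceptual for the finite-path case, and in fact the same idea (multiplication by a unimodular function is a unitary bimodule automorphism) would equally streamline the surjectivity step in the paper's proof of \autoref{prop: infinite path product system}.
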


We now introduce the product system built from infinite paths.

\begin{prop} \label{prop: infinite path product system}
Suppose that $\Lambda$ is a proper, source-free topological $k$-graph, and $c$ is a continuous $\T$-valued $2$-cocycle on $\Lambda$. For each $n \in \N^k$, let $Y_n$ be the $C_0(\Lambda^\infty)$-correspondence associated to the topological graph $\Lambda_{\infty,n}=(\Lambda^\infty, \Lambda^\infty, T^0, T^n)$, as in \autoref{lem: top graphs from top k-graph}. For $f \in Y_m$ and $g \in Y_n$, define $fg\colon \Lambda^\infty \to \C$ by
\[
(fg)(x) \coloneqq c(x(0,m),x(m,m+n)) f(x) g(T^m(x)).
\]
Then $fg \in Y_{m+n}$, and under this multiplication, the family 
\[
Y \coloneqq \bigsqcup_{n\in\N^k} Y_n
\]
of $C_0(\Lambda^\infty)$-correspondences is a product system over $\N^k$.
\end{prop}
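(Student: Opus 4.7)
The plan is to adapt the argument for the finite-path analogue \autoref{prop: finite path product system}, replacing the finite-path factorisation with the shift maps $T^m$ from \autoref{prop: shift maps}. First I would verify that $fg \in Y_{m+n}$. Continuity of $fg$ on $\Lambda^\infty$ follows from continuity of $c$, $f$, $g$, and of the maps $x \mapsto x(0,m) = \rho_{m,\infty}(x)$ and $x \mapsto x(m,m+n) = \rho_{n,\infty}(T^m(x))$, all provided by \autoref{lem: rho maps} and \autoref{prop: shift maps}. Since $c$ is $\T$-valued, the cocycle factor has modulus $1$ and disappears from the inner product; a reindexing of the sum along the fibres of $T^m$ gives
\[
\la fg, fg \ra_{Y_{m+n}} = \la g,\, \la f, f \ra_{Y_m} \cdot g \ra_{Y_n},
\]
which lies in $C_0(\Lambda^\infty)$ because $\la f, f \ra_{Y_m} \cdot g \in Y_n$, so $fg \in Y_{m+n}$.

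Next I would check the remaining product-system axioms. Since $T^0 = \id$, the correspondence $Y_0$ is canonically identified with ${}_A A_A$ for $A = C_0(\Lambda^\infty)$, and normalisation \autoref{item: twist_one} yields $ax = a \cdot x$ and $xa = x \cdot a$ for $a \in Y_0$. Associativity $(fg)h = f(gh)$ reduces, after cancelling the common factor $f(x) g(T^m(x)) h(T^{m+n}(x))$, to the cocycle identity \autoref{item: twist_identity} applied to the triple $(x(0,m),\, x(m,m+n),\, x(m+n, m+n+p))$. The $A$-bilinearity identity $(f \cdot a)g = f(a \cdot g)$ is immediate, since both sides equal $c(x(0,m), x(m,m+n)) f(x) a(T^m(x)) g(T^m(x))$.

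It then remains to show that the induced map $Y_m \otimes_A Y_n \to Y_{m+n}$ is an isomorphism of $A$-correspondences. Preservation of the inner product comes from a similar computation: the factors $\overline{c}\,c$ cancel in $\la fg, f'g' \ra_{Y_{m+n}}$, and reindexing along the fibres of $T^m$ gives $\la g, \la f, f' \ra_{Y_m} \cdot g' \ra_{Y_n}$. For density of the image, \autoref{lem: nice functions for top graph mods} reduces the problem to exhibiting each $h \in F_{\Lambda_{\infty, m+n}}$ as a product $fg$. Choose an open subset $U$ of $\Lambda^\infty$ with $\supp(h) \subseteq U$ and $T^{m+n}|_U$ a homeomorphism; since $T^{m+n} = T^n \circ T^m$ and each shift is a local homeomorphism, $U$ is also an open $T^m$-section and $T^m(U)$ is an open $T^n$-section. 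Using Urysohn's lemma, pick $g \in F_{\Lambda_{\infty,n}}$ with $\supp(g) \subseteq T^m(U)$ and $g \equiv 1$ on the compact set $T^m(\supp(h))$, and define $f(x) \coloneqq h(x) \overline{c(x(0,m), x(m, m+n))}$ for $x \in U$ and $f(x) \coloneqq 0$ otherwise. A routine continuity check puts $f \in F_{\Lambda_{\infty, m}}$, and by construction $fg = h$ pointwise.

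The main obstacle is this density step: one must absorb the cocycle twist into $f$ via a conjugate factor on an $s$-section, and then verify that the chain $U \to T^m(U) \to T^{m+n}(U)$ of local homeomorphisms correctly witnesses the $s$-section conditions for $f$ and $g$ in their respective correspondences. All remaining steps parallel those for \autoref{prop: finite path product system}, with the cocycle identity \autoref{item: twist_identity} feeding directly into associativity and the normalisation \autoref{item: twist_one} giving the $A$-module identities for elements of $Y_0$.
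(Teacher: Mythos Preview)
Your argument is correct and covers all the required pieces. The main difference from the paper is in the surjectivity step for the map $Y_m \otimes_A Y_n \to Y_{m+n}$. The paper does not attempt to factor individual functions: instead it introduces the \emph{untwisted} product $(f*g)(x) \coloneqq f(x)g(T^m(x))$, observes that $\vecspan\{f*g\}$ is a genuine subalgebra of $C_0(\Lambda^\infty)$, and applies Stone--Weierstrass to get uniform density of $\vecspan\{f*g\} \cap C_c(U)$ in $C_c(U)$ for each open $U$; it then multiplies by $\overline{c(x(0,m),x(m,m+n))}$ to transfer this density to the twisted span $\vecspan\{fg\}$, and finally invokes \cite[Lemma~1.26]{Katsura2004TAMS}. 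Your route is more direct: you exploit that on a $T^{m+n}$-section the twist can be absorbed into one factor, so every $h \in F_{\Lambda_{\infty,m+n}}$ is literally a single product $fg$, and then density of $\vecspan F_{\Lambda_{\infty,m+n}}$ together with closedness of the isometric image finishes the job. Your approach is shorter and avoids Stone--Weierstrass and Katsura's lemma; the paper's approach has the advantage of staying closer to the standard topological-graph toolkit. One small point worth making explicit in your write-up: the reduction via \autoref{lem: nice functions for top graph mods} gives only density of the image, so you should state that the image is closed because the map is isometric on the completed tensor product.
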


\begin{proof}
Let $m,n \in \N^k$. We show that $f\otimes g \mapsto fg$ extends to an isomorphism of $Y_m \otimes_{C_0(\Lambda^\infty)} Y_n$ onto $Y_{m+n}$. Standard arguments show that for each $f \in Y_m$ and $g \in Y_n$ we have $fg$ continuous. We claim that for each $f \in Y_m$ and $g \in Y_n$, we have $\la fg, fg \ra_{Y_{m+n}} \in C_0(\Lambda^\infty)$. Let $f_1,f_2 \in Y_m$ and $g_1,g_2 \in Y_n$. Then
\begin{align*}
&\la f_1 g_1, f_2 g_2 \ra_{Y_{m+n}} (x) \\
&= \sum_{\substack{ y \in \Lambda^\infty, \\ T^{m+n}(y) = x }} \overline{c\big(y(0, m), y(m, m+n)\big) f_1(y) g_1(T^m(y))} c\big(y(0, m), y(m, m+n)\big) f_2(y) g_2(T^m(y)) \\
&= \sum_{\substack{ y \in \Lambda^\infty, \\ T^{m+n}(y) = x }} \big\lav c\big(y(0, m), y(m, m+n)\big) \big\rav^2\, \overline{f_1(y) g_1(T^m(y))} f_2(y) g_2(T^m(y)) \\
&= \sum_{\nu \in \Lambda^n r(x)} \Big( \sum_{\mu \in \Lambda^m r(\nu)} \overline{f_1(\mu\nu x)} f_2(\mu\nu x) \Big)\, \overline{g_1(\nu x)} g_2(\nu x) \\
&= \sum_{\nu \in \Lambda^n r(x)} \Big( \sum_{\substack{y \in \Lambda^\infty, \\ T^m(y) = \nu x}} \overline{f_1(y)} f_2(y) \Big)\, \overline{g_1(\nu x)} g_2(\nu x) \\
&= \sum_{\nu \in \Lambda^n r(x)} \la f_1, f_2 \ra_{Y_m} (\nu x) \overline{g_1(\nu x)} g_2(\nu x) \\
&= \sum_{\nu \in \Lambda^n r(x)} \overline{g_1(\nu x)} (\la f_1, f_2 \ra_{Y_m} \cdot g_2)(\nu x) \\
&= \sum_{\substack{y \in \Lambda^\infty, \\ T^n(y) = x}} \overline{g_1(y)} (\la f_1, f_2 \ra_{Y_m} \cdot g_2)(y) \\
&= \la g_1, \la f_1, f_2 \ra_{Y_m} \cdot g_2 \ra_{Y_n}(x).
\end{align*}
So we have
\begin{equation} \label{eq: tps_inner_product}
\la f_1 g_1, f_2 g_2 \ra_{Y_{m+n}} = \la g_1, \la f_1, f_2 \ra_{Y_m} \cdot g_2 \ra_{Y_n},
\end{equation}
and we see that $f \in Y_m$, $g \in Y_n \implies fg \in Y_{m+n}$ by taking $f=f_1=f_2 \in Y_m$, $g=g_1=g_2 \in Y_n$ in \autoref{eq: tps_inner_product}. It also follows from \autoref{eq: tps_inner_product} that $f \otimes g \mapsto fg$ extends to an isometric linear operator from $Y_m\otimes_{C_0(\Lambda^\infty)} Y_n$ to $Y_{m+n}$. To show that this map is surjective, we aim to apply \cite[Lemma~1.26]{Katsura2004TAMS}. For each $f \in Y_m$ and $g \in Y_n$, we define $f*g \colon \Lambda^\infty \to \C$ by $(f*g)(x) \coloneqq f(x)g(T^m(x))$. Then $f*g \in C_c(\Lambda^\infty)$ for each $f,g \in C_c(\Lambda^\infty)$. We define 
\[
\AA \coloneqq \vecspan\{ f*g : f \in C_c(\Lambda^\infty) \subseteq Y_m,\, g \in C_c(\Lambda^\infty) \subseteq Y_n \},
\]
and
\[
\BB \coloneqq \vecspan\{fg : f \in C_c(\Lambda^\infty) \subseteq Y_m,\, g \in C_c(\Lambda^\infty) \subseteq Y_n \}.
\]
Fix an open subset $U \subseteq \Lambda^\infty$. The set $\BB$ is obviously closed under addition (but due to the presence of the cocycle, is not obviously closed under multiplication), and we claim that $\BB \cap C_c(U)$ is uniformly dense in $C_c(U)$. To see this, first note that the set $\AA$ is easily seen to be a subalgebra of $C_0(\Lambda^\infty)$, and that a standard argument using the Stone--Weierstrass theorem shows that $\AA \cap C_c(U)$ is uniformly dense in $C_c(U)$. Now let $F \in C_c(U)$ and $\varepsilon > 0$. The map $G \colon x \mapsto \overline{c(x(0,m),x(m,m+n))} F(x)$ is continuous, and has the same support of $F$, so is an element of $C_c(U)$. We can then find $f_1,\dotsc,f_p \in C_c(\Lambda^\infty) \subseteq Y_m$ and $g_1,\dotsc,g_p \in C_c(\Lambda^\infty) \subseteq Y_n$ such that $\sum_{i=1}^p f_i*g_i \in \AA \cap C_c(U)$ and $\lv G - \sum_{i=1}^p f_i*g_i \rv_{\infty} < \varepsilon$. It follows that 
\[
\big\lv F- \sum_{i=1}^p f_ig_i \big\rv_{\infty} = \big\lv G - \sum_{i=1}^p f_i*g_i \big\rv_{\infty} < \varepsilon,
\]
and hence the claim holds. We can now apply \cite[Lemma~1.26]{Katsura2004TAMS} to see that $f\otimes g \mapsto fg$ is surjective. Hence it is an isomorphism. The other identities required to make $Y$ a product system follow from standard calculations.
\end{proof}

\begin{prop} \label{prop:compactlyaligned}
Suppose that $\Lambda$ is a proper, source-free topological $k$-graph, and $c$ is a continuous $\T$-valued $2$-cocycle on $\Lambda$. The product systems $X$ and $Y$ are compactly aligned. 
\end{prop}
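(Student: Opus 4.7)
The proof is essentially immediate from results already assembled in the paper. Recall from \autoref{subsec: corr, prod sys, and C stars} (following \cite[Proposition~5.8]{Fowler2002}) that a product system is automatically compactly aligned whenever each of its left actions takes values in the compact operators. My plan is to verify this hypothesis separately for $X$ and for $Y$.

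The key observation is that the cocycle $c$ only enters the definition of the multiplication between different fibres; on each individual fibre $X_n$ (respectively $Y_n$), the right action, left action, and inner product are precisely those of the topological graph correspondence of $\Lambda_n$ (respectively $\Lambda_{\infty,n}$) from \autoref{lem: top graphs from top k-graph}. Indeed, axiom (iv) in the definition of a product system forces $ax = a\cdot x$, so the homomorphism $\phi_{X_n}$ implementing the left action in the product system is identical to the one coming from the $C^*$-correspondence structure on $X_n$, and similarly for $\phi_{Y_n}$.

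With this identification in hand, the proof reduces to quoting \autoref{lem: top graphs from top k-graph}, which asserts that for every $n\in\N^k$ the homomorphisms $\phi_{X_n}\colon C_0(\Lambda^0)\to \LL(X_n)$ and $\phi_{Y_n}\colon C_0(\Lambda^\infty)\to \LL(Y_n)$ have range in $\KK(X_n)$ and $\KK(Y_n)$ respectively. Applying \cite[Proposition~5.8]{Fowler2002} to each of $X$ and $Y$ then yields compact alignment. I do not anticipate a genuine obstacle here: the potentially tricky ingredient (compactness of the left action, which relies on properness and source-freeness of $\Lambda$) has already been packaged into \autoref{lem: top graphs from top k-graph}, and the cocycle plays no role in the verification.
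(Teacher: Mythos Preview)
Your proposal is correct and follows exactly the paper's own approach: invoke \autoref{lem: top graphs from top k-graph} to see that each $\phi_{X_n}$ and $\phi_{Y_n}$ takes values in the compact operators, and then apply \cite[Proposition~5.8]{Fowler2002}. Your additional remark that the cocycle only enters the inter-fibre multiplication (and hence does not affect the individual left actions) is a helpful clarification but not strictly needed, since this is already implicit in the constructions of \autoref{prop: finite path product system} and \autoref{prop: infinite path product system}.
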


\begin{proof}
We saw in \autoref{lem: top graphs from top k-graph} that the left actions $\phi_{X_n}$ and $\phi_{Y_n}$ are all by compacts, and so the result follows from \cite[Proposition~5.8]{Fowler2002}.
\end{proof}

\subsection{The twisted \texorpdfstring{$C^*$-algebras}{C*-algebras}} \label{subsec: the C*s}

\begin{definition} \label{def: the C*s}
Suppose that $\Lambda$ is a proper, source-free topological $k$-graph, and $c$ is a continuous $\T$-valued $2$-cocycle on $\Lambda$. We define the \emph{twisted Cuntz--Krieger algebra} $C^*(\Lambda,c)$ to be the Cuntz--Pimsner algebra $\OO(X)$. We define the \emph{twisted Toeplitz algebra} $\TT C^*(\Lambda,c)$ to be the Nica--Toeplitz algebra $\NT(X)$. 
\end{definition}

\begin{remarks} \label{rems: trivial twist and top} \leavevmode
\begin{enumerate}[label=(\roman*),ref=\autoref{rems: trivial twist and top}~(\roman*)]
\item When the twist is trivial, $C^*(\Lambda,c)$ and $\TT C^*(\Lambda,c)$ are precisely the $C^*$-algebras studied in \cite[Section~5.3]{CLSV2011} (for $\Lambda$ proper and source-free). We can then apply \cite[Theorem~5.20]{CLSV2011} to see that we obtain the groupoid $C^*$-algebras of topological $k$-graphs defined in \cite{Yeend2006CM}.

\item \label{item: generalising C*(Lambda,c)} Recall from \cite[Definition~5.2]{KPS2015TAMS} the definition of the twisted $C^*$-algebra of a row-finite $k$-graph $\Lambda$ with no sources, and a $\T$-valued categorical $2$-cocycle $c$ on $\Lambda$. This $C^*$-algebra is also denoted by $C^*(\Lambda,c)$, but for this remark we call it $C_{\KPS}^*(\Lambda,c)$. It is a straightforward exercise (using the gauge-invariant uniqueness theorem for injectivity) to prove that there is an isomorphism of $C_{\KPS}^*(\Lambda,c)$ onto $C^*(\Lambda,c)$ that sends each generating partial isometry $s_\lambda$ to $j_{X,d(\lambda)}(\delta_\lambda)$, where $\delta_\lambda \colon \Lambda^{d(\lambda)} \to \C$ is the usual point-mass function. So our twisted Cuntz--Krieger algebras generalise the twisted $C^*$-algebras of \cite{KPS2015TAMS} in the discrete setting.
\end{enumerate}
\end{remarks}

We now state our main theorem, which describes the relationship between the twisted Cuntz--Krieger and Toeplitz algebras, and the $C^*$-algebras associated to the product system $Y$ from \autoref{prop: infinite path product system}. We will prove this theorem in \autoref{sec: NC repn of X} and \autoref{sec: the C*s}.

\begin{thm} \label{thm: main theorem}
Suppose that $\Lambda$ is a proper, source-free topological $k$-graph, and $c$ is a continuous $\T$-valued $2$-cocycle on $\Lambda$. Let $X$ and $Y$ be the product systems from \autoref{prop: finite path product system} and \autoref{prop: infinite path product system}, respectively. Then there is a Nica-covariant representation $\psi\colon X\to \NT(Y)$ such that
\begin{enumerate}[label=(\alph*)]
\item \label{item: thm psi and the NT map} the induced homomorphism $\psi^{\NT}\colon \TT C^*(\Lambda,c)\to\NT(Y)$ is injective; and
\item \label{item: thm zeta and the O map} the map $\zeta\coloneqq q_Y\circ \psi\colon X\to\OO(Y)$ is a Cuntz--Pimsner-covariant representation of $X$, and the induced homomorphism $\zeta^\OO\colon C^*(\Lambda,c)\to\OO(Y)$ is an isomorphism.
\end{enumerate}
\end{thm}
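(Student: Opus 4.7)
My plan is to build $\psi$ by lifting each finite-path correspondence element $f \in X_n$ to the infinite-path function $\tilde f(x) \coloneqq f(x(0,n))$. Continuity follows from \autoref{lem: rho maps}, and the calculation
\[
\langle \tilde f_1,\tilde f_2\rangle_{Y_n}(x) = \sum_{\lambda \in \Lambda^n r(x)} \overline{f_1(\lambda)} f_2(\lambda) = \langle f_1,f_2\rangle_{X_n}(r(x)),
\]
combined with properness of $\rho_{0,\infty}$, places $\tilde f$ in $Y_n$. Setting $\psi_n(f) \coloneqq i_{Y,n}(\tilde f)$ and $\psi_0(g) \coloneqq i_{Y,0}(g \circ r)$ for $g \in C_0(\Lambda^0)$, the verifications that each $(\psi_n,\psi_0)$ is a representation of $X_n$ and that $\psi$ respects the product-system multiplication are routine: they reduce to the compatibilities $\widetilde{g\cdot f} = (g\circ r)\cdot \tilde f$, $\widetilde{f\cdot g} = \tilde f \cdot (g\circ r)$, the inner-product identity above, and the observation that $\widetilde{fg}$ and $\tilde f \tilde g$ both evaluate at $x$ to $c(x(0,m),x(m,m+n))\,f(x(0,m))\,g(x(m,m+n))$.

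The main technical hurdle is Nica covariance. I would introduce the map $T_p\colon \KK(X_p)\to\KK(Y_p)$ defined on rank-one operators by $T_p(\Theta_{f_1,f_2}) \coloneqq \Theta_{\tilde f_1,\tilde f_2}$, which extends to a $*$-homomorphism because $f\mapsto \tilde f$ is isometric (using the surjectivity of $\rho_{0,\infty}$ from \autoref{rem: infinite paths coming into every vertex}, which makes $g\mapsto g\circ r$ an isometric $*$-embedding $C_0(\Lambda^0)\hookrightarrow C_0(\Lambda^\infty)$). Since $\psi^{(p)} = i_Y^{(p)}\circ T_p$, the Nica covariance of the universal representation $i_Y$ reduces Nica covariance of $\psi$ to verifying the compatibility
\[
\iota_{Y,m}^{m\vee n}\bigl(T_m(S)\bigr)\,\iota_{Y,n}^{m\vee n}\bigl(T_n(T)\bigr) \;=\; T_{m\vee n}\bigl(\iota_{X,m}^{m\vee n}(S)\,\iota_{X,n}^{m\vee n}(T)\bigr)
\]
in $\KK(Y_{m\vee n})$ for $S\in\KK(X_m)$ and $T\in\KK(X_n)$. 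For $S=\Theta_{f_1,f_2}$ and $T=\Theta_{g_1,g_2}$, both sides are compact operators that I would first show agree on lifted products $\tilde h_1 \tilde h_2 \in Y_{m\vee n}$ (with $h_1\in X_m$, $h_2\in X_{m\vee n-m}$) by a direct computation using multiplicativity of the lift. The main obstacle is extending this pointwise equality to all of $Y_{m\vee n}$; I expect this to require a Stone--Weierstrass density argument refining \autoref{lem: nice functions for top graph mods} in the spirit of the proof of \autoref{prop: infinite path product system}.

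Injectivity of $\psi^\NT$ then follows from \autoref{thm: Fletcher uniqueness theorem} by representing $\NT(Y)$ faithfully on a Hilbert space (e.g.\ via the Fock representation) and, for each nonzero $a\in C_0(\Lambda^0)$ and finite $K\subseteq \N^k\setminus\{0\}$, producing an infinite path $x\in v\Lambda^\infty$ with $a(v)\ne 0$ (\autoref{rem: infinite paths coming into every vertex}) whose associated Fock vector is orthogonal to every $\overline{\psi_n(X_n)\HH}$ for $n\in K$ but not to $\psi_0(a)\HH$. For \autoref{item: thm zeta and the O map}, Cuntz--Pimsner covariance of $\zeta = q_Y\circ\psi$ reduces to the identity $T_n(\phi_{X_n}(a)) = \phi_{Y_n}(a\circ r)$, proved by the same density argument as for Nica covariance. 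Surjectivity of $\zeta^\OO$ uses that $\phi_{Y_n}$ has range equal to $\KK(Y_n)$ (\autoref{lem: top graphs from top k-graph}) together with Cuntz--Pimsner covariance of $j_Y$ to recover all of $j_{Y,0}(C_0(\Lambda^\infty))$, and then another density argument to build up $j_{Y,n}(Y_n)$ from products of $j_{Y,n}(\widetilde{X_n})$ and $j_{Y,0}(C_0(\Lambda^\infty))$. Injectivity of $\zeta^\OO$ follows from a gauge-invariant uniqueness theorem for the Cuntz--Nica--Pimsner algebras $\OO(X)=\NO(X)$ and $\OO(Y)=\NO(Y)$, using equivariance under the natural gauge actions together with isometry of $\zeta^\OO\circ j_{X,0}$ on $C_0(\Lambda^0)$.
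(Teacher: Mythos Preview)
Your construction of $\psi$ and the verifications for representation, multiplicativity, Cuntz--Pimsner covariance, and injectivity of $\zeta^\OO$ all match the paper's. Your Nica-covariance strategy is genuinely different: rather than the paper's explicit partition-of-unity formulas for $\iota_m^{m\vee n}(\Theta_{f_1,f_2})\iota_n^{m\vee n}(\Theta_{g_1,g_2})$ in both $X$ and $Y$ (\autoref{lem: CLSV for X and Y} and \autoref{lem: identity with alphas and Thetas}), you propose checking the operator identity on lifted elements $\tilde h\in\alpha_{m\vee n}(X_{m\vee n})$ and extending. This works, but the extension step needs care: lifted elements are \emph{not} dense in $Y_{m\vee n}$ (they depend only on $x(0,m\vee n)$), so you must use that adjointable operators are right-$C_0(\Lambda^\infty)$-linear and that $\alpha_{m\vee n}(X_{m\vee n})\cdot C_0(\Lambda^\infty)$ is dense in $Y_{m\vee n}$; this density does follow from Stone--Weierstrass and Katsura's lemma, as you anticipate. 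Your injectivity argument for $\psi^\NT$ is vaguer than the paper's---a ``Fock vector associated to an infinite path $x$'' is not obviously well-defined since point masses are not in $C_0(\Lambda^\infty)$---whereas the paper simply uses the projection inequality $P_n^{\Psi\circ\alpha}\le P_n^\Psi$ (immediate from $\alpha_n(X_n)\subseteq Y_n$) to bootstrap Fletcher's criterion from $Y$ to $X$.

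There is a genuine gap in your surjectivity argument for $\zeta^\OO$. You claim $\phi_{Y_n}$ has range \emph{equal to} $\KK(Y_n)$, but \autoref{lem: top graphs from top k-graph} only says the range lies \emph{in} the compacts; since $\phi_{Y_n}$ is pointwise multiplication and $T^n$ is typically not injective, most $\Theta_{y_1,y_2}$ are not multiplication operators. More seriously, your two-step plan is circular as stated: recovering $j_{Y,0}(b)$ via $j_{Y,0}(b)=j_Y^{(n)}(\phi_{Y_n}(b))$ and expressing $\phi_{Y_n}(b)$ through rank-ones on $Y_n$ already presumes $j_{Y,n}(Y_n)\subseteq\range(\zeta^\OO)$, which was to be your second step. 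The paper breaks this circularity by introducing auxiliary maps $\alpha_{n,m}\colon X_m\to Y_n$ for all $m\ge n$ (not just $m=n$), proving a dedicated density result (\autoref{lem: C_0(Z(U)) SWT}) showing that functions $f\circ\rho_{m,\infty}$ with $m$ varying span $C_0(Z(U))$, and then showing directly---via a manipulation exploiting the Cuntz--Pimsner relation in $\OO(Y)$---that each $j_{Y,n}(\alpha_{n,m}(f))$ lies in $\range(\zeta^\OO)$. A repair in the spirit of your plan would be to first show that $\phi_{Y_p}(h\circ\rho_{p,\infty})$ lies in the image of $\alpha_p^\KK$ (not merely in $\KK(Y_p)$) whenever $h\in C_c(\Lambda^p)$ is supported in an $s$-section, so that $j_{Y,0}(h\circ\rho_{p,\infty})=j_Y^{(p)}(\alpha_p^\KK(\cdot))\in\range(\zeta^\OO)$; density over all $p$ then yields $j_{Y,0}(C_0(\Lambda^\infty))$ non-circularly, after which your Step~2 goes through.
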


\begin{remarks} \leavevmode
\begin{enumerate}[label=(\roman*)]
\begin{samepage}
\item \autoref{thm: main theorem} substantially generalises \cite[Proposition~8.6]{AaHR2018}, which applies in the untwisted setting, and to finite $1$-coaligned (discrete) $k$-graphs with no sources or sinks. (See also \cite[Theorem~7.1]{AaHR2014} in the finite directed graph setting.)
\end{samepage}

\item We do not expect the map $\psi^{\NT}$ from part (a) in Theorem~\ref{thm: main theorem} to be surjective, and we can use the results of \cite{aHLRS2014} and \cite{AaHR2018} on KMS states to see this. First observe that if $c$ is the trivial cocycle, and $\Lambda$ is a discrete $k$-graph, then $\TT C^*(\Lambda,c)$ is the $C^*$-algebra appearing in \cite[Corollary~7.5]{RS2005}. Now take $\Lambda$ to be a finite $1$-coaligned (discrete) $k$-graph with no sources or sinks, and $c$ to be the trivial cocycle. Then we see from the proof of \cite[Proposition~8.11]{AaHR2018} that a KMS state of $\TT C^*(\Lambda,c)$ is the restriction of many distinct KMS states on $\NT(Y)$. Hence the KMS simplex of $\NT(Y)$ is bigger than the KMS simplex of $\TT C^*(\Lambda,c)$, and so the two algebras are not isomorphic.
\end{enumerate}
\end{remarks}

\vspace{0.5em}
\section{A Nica-covariant representation of \texorpdfstring{$X$}{X} in \texorpdfstring{$\NT(Y)$}{NT(Y)}} \label{sec: NC repn of X}

In this section we build a Nica-covariant representation $\psi$ of the product system $X$ in the Nica--Toeplitz algebra of the product system $Y$. 

\begin{prop} \label{prop: NC repn of X}
Suppose that $\Lambda$ is a proper, source-free topological $k$-graph, and $c$ is a continuous $\T$-valued $2$-cocycle on $\Lambda$. Suppose that $X$ and $Y$ are the product systems from \autoref{prop: finite path product system} and \autoref{prop: infinite path product system}, respectively. 
\begin{enumerate}[label=(\alph*)]
\item \label{item: the alphas} For each $m,n \in \N^k$ with $m \ge n$, there is a map $\alpha_{n,m}\colon X_m \to Y_n$ given by $\alpha_{n,m}(f)(x) \coloneqq f(x(0,m))$, for all $f\in X_m$ and $x\in\Lambda^\infty$. We denote each $\alpha_{n,n}$ by $\alpha_n$.
\item \label{item: the NCR psi} The map $\psi\colon X\to \NT(Y)$ given by $\psi_n \coloneqq i_{Y,n} \circ \alpha_n$ is a Nica-covariant representation of $X$, where $\psi|_{X_n} \coloneqq \psi_n$.
\end{enumerate}
\end{prop}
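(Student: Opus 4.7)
The plan is to verify part (a) by direct computation, then check in part (b) that each $(\psi_n, \psi_0)$ is a correspondence representation of $X_n$, that $\psi$ is multiplicative on the product system, and that $\psi$ satisfies the Nica covariance relation.

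For part (a), continuity of $\alpha_{n,m}(f) = f \circ \rho_{m,\infty}$ follows from \autoref{lem: rho maps}. To check that $\la \alpha_{n,m}(f), \alpha_{n,m}(f)\ra_{Y_n} \in C_0(\Lambda^\infty)$, I would use the bijection $y \leftrightarrow (y(0,n), T^n(y))$ to rewrite
\[
\la \alpha_{n,m}(f), \alpha_{n,m}(f)\ra_{Y_n}(x) = \sum_{\nu \in \Lambda^n r(x)} \lav f(\nu \cdot x(0, m-n))\rav^2,
\]
which depends on $x$ only via $x(0, m-n) \in \Lambda^{m-n}$. Using $X_m \cong X_n \otimes_{C_0(\Lambda^0)} X_{m-n}$ and density, reduce to elementary tensors $f = f_1 \otimes g_1$: the expression becomes $|g_1|^2 \cdot (\la f_1, f_1\ra_{X_n} \circ r)$, which lies in $C_0(\Lambda^{m-n})$ by boundedness of $|g_1|$ (from $\la g_1, g_1\ra_{X_{m-n}} \in C_0(\Lambda^0)$) together with properness of $r|_{\Lambda^{m-n}}$. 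Properness of $\rho_{m-n,\infty}$ (\autoref{lem: rho maps}) then yields the $C_0$ condition on $\Lambda^\infty$. The case $m = n$ simplifies to $\la f, f\ra_{X_n} \circ r$.

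For part (b), note that $\alpha_0 \colon g \mapsto g \circ r$ is a $*$-homomorphism $C_0(\Lambda^0) \to C_0(\Lambda^\infty)$, and a direct computation shows $\la \alpha_n(f), \alpha_n(g)\ra_{Y_n} = \alpha_0(\la f, g\ra_{X_n})$, with the bimodule actions respected; composing with $i_Y$ yields that each $(\psi_n, \psi_0)$ is a representation of $X_n$ in $\NT(Y)$. Multiplicativity of $\psi$ reduces to the identity $\alpha_{m+n}(fg) = \alpha_m(f)\alpha_n(g)$, which follows from the unique factorisation $x(0, m+n) = x(0,m) \cdot x(m, m+n)$ together with the coincidence of the cocycle factor $c(x(0,m), x(m,m+n))$ in the two product formulas (since $T^m(x)(0,n) = x(m, m+n)$); multiplicativity of $i_Y$ then gives $\psi_{m+n}(fg) = \psi_m(f)\psi_n(g)$.

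The main obstacle is Nica covariance. The key observation is that on rank-ones $\psi^{(q)}(\Theta_{F,G}) = \psi_q(F)\psi_q(G)^* = i_Y^{(q)}(\Theta_{\alpha_q(F), \alpha_q(G)})$, which extends by linearity and norm continuity to a factorisation $\psi^{(q)} = i_Y^{(q)} \circ \alpha_q^{(1)}$ on $\KK(X_q)$, where $\alpha_q^{(1)} \colon \KK(X_q) \to \LL(Y_q)$ is the $*$-homomorphism induced by $\Theta_{F,G} \mapsto \Theta_{\alpha_q(F), \alpha_q(G)}$ (well-defined since $\alpha_q$ preserves inner products via $\alpha_0$). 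Applying Nica covariance of $i_Y$ to $\psi^{(m)}(\Theta_{F,G}) \psi^{(n)}(\Theta_{F',G'})$ then reduces the Nica covariance of $\psi$ to the intertwining identity
\[
\alpha_p^{(1)}\bigl(\iota_m^{p}(\Theta_{F,G})\bigr) = \iota_m^{p}\bigl(\Theta_{\alpha_m(F), \alpha_m(G)}\bigr) \qquad (p = m \vee n)
\]
in $\LL(Y_p)$, and its counterpart for $n$. Both sides are right $C_0(\Lambda^\infty)$-module maps on $Y_p$. The general identity $\alpha_p^{(1)}(T)(\alpha_p(z)) = \alpha_p(T(z))$ holds for rank-one $T$ and extends to all $T \in \KK(X_p)$; combined with multiplicativity of $\alpha$ and the explicit formula $\iota_m^{p}(\Theta_{F,G})(xy) = (F \cdot \la G, x\ra)\,y$ on products $xy \in X_m \cdot X_{p-m}$, both sides agree on $\alpha_p(X_p)$. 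A Stone--Weierstrass-type argument, exploiting the bijection $x \mapsto (x(0, p), T^p(x))$ between $\Lambda^\infty$ and the composable pairs in $\Lambda^p \times \Lambda^\infty$ (together with \autoref{lem: nice functions for top graph mods}), shows that $\alpha_p(X_p) \cdot C_0(\Lambda^\infty)$ is dense in $Y_p$, completing the identification and hence the proof of Nica covariance.
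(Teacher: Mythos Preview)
Your argument is correct, and for part~(b) it takes a genuinely different route from the paper's.

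For part~(a), the paper works on $C_c(\Lambda^m)$ first, observes that $\alpha_{n,m}(f)=f\circ\rho_{m,\infty}$ lands in $C_c(\Lambda^\infty)$, and then proves the norm inequality $\lv\alpha_{n,m}(f)\rv_{Y_n}\le\lv f\rv_{X_m}$ directly to extend to all of $X_m$. Your tensor-reduction argument is fine for elementary tensors, but to pass from ``$C_0$ condition holds on a dense set'' to ``holds on $X_m$'' you implicitly need the same boundedness, so the paper's estimate is the cleaner way to close that step.

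For Nica covariance the approaches diverge substantially. The paper proves the \emph{product} identity
\[
\alpha_{m\vee n}^\KK\bigl(\iota_m^{m\vee n}(\Theta_{f_1,f_2})\,\iota_n^{m\vee n}(\Theta_{g_1,g_2})\bigr)=\iota_m^{m\vee n}(\Theta_{\alpha_m(f_1),\alpha_m(f_2)})\,\iota_n^{m\vee n}(\Theta_{\alpha_n(g_1),\alpha_n(g_2)})
\]
by first writing each side explicitly as a finite sum of rank-ones via carefully chosen partitions of unity on $\tau_{p,m\vee n}(\supp f_2\vee\supp g_1)$ (the technical \autoref{lem: CLSV for X and Y}), and then matching the two sums term by term. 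You instead prove the \emph{factorwise} intertwining $\alpha_p^{(1)}\circ\iota_m^p=\iota_m^p\circ\alpha_m^{(1)}$ for each $m\le p$ separately, by checking it on $\alpha_p(X_p)$ and invoking a Stone--Weierstrass density argument for $\alpha_p(X_p)\cdot C_0(\Lambda^\infty)$ in $Y_p$; Nica covariance then follows formally from Nica covariance of $i_Y$ and multiplicativity of $\alpha_p^{(1)}$. Your route is more conceptual and bypasses the long partition-of-unity computation entirely; the paper's route is more self-contained and yields the explicit rank-one decomposition of $\iota_m^{m\vee n}(S)\iota_n^{m\vee n}(T)$ as a by-product. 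The paper also establishes the existence of $\alpha_n^\KK$ (your $\alpha_n^{(1)}$) via a direct norm estimate in \autoref{lem: the alpha KK map}; your justification via the induced map $\psi^{(n)}$ and injectivity of $i_Y^{(n)}$ is equally valid and shorter.
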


\begin{remark}
For all $m,n,p \in \N^k$ with $m \ge n,p$ and each $f \in X_m$, we have
\[
\alpha_{n,m}(f)(x) = f(x(0,m)) = \alpha_{p,m}(f)(x),
\]
for all $x \in \Lambda^\infty$. However, $\alpha_{n,m}(f) \ne \alpha_{p,m}(f)$, because $\alpha_{n,m}(f) \in Y_n$, whereas $\alpha_{p,m}(f) \in Y_p$.
\end{remark}

\begin{proof}[Proof of \autoref{prop: NC repn of X}~\autoref{item: the alphas}]
Fix $m,n\in\N^k$ with $m\ge n$, and $f\in C_c(\Lambda^m)$. Recall from \autoref{lem: rho maps} that $\rho_{m,\infty}\colon \Lambda^\infty \to \Lambda^m$ is the continuous proper map given by $\rho_{m,\infty}(x) \coloneqq x(0,m)$. We have $\alpha_{n,m}(f) = f\circ\rho_{m,\infty}$, and hence $\alpha_{n,m}(f)$ is continuous. We have $\supp(\alpha_{n,m}(f)) \subseteq \rho_{m,\infty}^{-1}(\supp(f))$, which is a compact subset of $\Lambda^\infty$. Hence $\alpha_{n,m}(f) \in C_c(\Lambda^\infty)$. We have
\[
\lv f \rv_{X_m}^2 = \lv \la f,f \ra_{X_m} \rv = \sup\{\la f,f \ra_{X_m}(v) : v\in \Lambda^0\} = \sup\Big\{\sum_{\lambda\in \Lambda^mv} \lav f(\lambda) \rav^2 : v\in \Lambda^0\Big\},
\]
and
\begin{align*}
\lv \alpha_{n,m}(f) \rv_{Y_n}^2 & = \sup\Big\{ \sum_{\substack{y \in \Lambda^\infty, \\ T^n(y) = x}} \lav f(y(0,m)) \rav^2 : x \in \Lambda^\infty \Big\} \\
& = \sup\Big\{ \sum_{\mu\in\Lambda^nr(x)} \lav f(\mu x(0,m-n))\rav^2 : x \in \Lambda^\infty\Big \}.
\end{align*}
For each $x\in\Lambda^\infty$, we have
\[
\sum_{\mu\in\Lambda^nr(x)}\lav f(\mu x(0,m-n))\rav^2 \le \sum_{\lambda\in \Lambda^mx(m-n)} \lav f(\lambda)\rav^2,
\]
and it follows that $\lv \alpha_{n,m}(f) \rv_{Y_n} \le \lv f \rv_{X_m}$. Hence $\alpha_{n,m}$ is bounded, and extends to the desired map $X_m\to Y_n$.
\end{proof}

To prove \autoref{prop: NC repn of X}~\autoref{item: the NCR psi}, we need a number of results, including some technical calculations (see \autoref{lem: CLSV for X and Y}). We start with some properties of the $\alpha_n$ maps.

\begin{lemma} \label{lem: props of alpha}
Suppose that $\Lambda$ is a proper, source-free topological $k$-graph, and $c$ is a continuous $\T$-valued $2$-cocycle on $\Lambda$. Let $X$ and $Y$ be the product systems from \autoref{prop: finite path product system} and \autoref{prop: infinite path product system}, respectively. For each $m, n \in \N^k$, we have
\begin{enumerate}[label=(\roman*)]
\item \label{item: alpha respects left action} $\alpha_m(g \cdot f)=\alpha_0(g) \cdot \alpha_m(f)$, for all $f \in X_m$, $g\in C_0(\Lambda^0)$;
\item \label{item: alpha respects right action} $\alpha_m(f \cdot g)=\alpha_m(f) \cdot \alpha_0(g)$, for all $f \in X_m$, $g\in C_0(\Lambda^0)$;
\item \label{item: alpha respects ip} $\la \alpha_m(f), \alpha_m(g) \ra_{Y_m} = \alpha_0(\la f, g \ra_{X_m})$, for all $f,g \in X_m$;
\item \label{item: alpha is multi} $\alpha_{m+n}(fg)=\alpha_m(f)\alpha_n(g)$, for all $f \in X_m$, $g \in X_n$; and
\item \label{item: alpha_n is injective} $\alpha_n$ is injective, for each $n \in \N^k$.
\end{enumerate}
\end{lemma}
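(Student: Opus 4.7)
The plan is to verify each identity first on the dense subspace of $C_c$-functions by a direct pointwise calculation, and then extend to all of $X_m$ by continuity, using the bound $\lv \alpha_{n,m}(f) \rv_{Y_n} \le \lv f \rv_{X_m}$ established in the proof of part \autoref{item: the alphas}. The key structural facts I will lean on throughout are that $\rho_{0,\infty}(x) = r(x)$, so that $\alpha_0(g)(x) = g(r(x))$ for all $g \in C_0(\Lambda^0)$, and that for any $x \in \Lambda^\infty$ and $m,n \in \N^k$ one has $r(x(0,m)) = r(x)$, $s(x(0,m)) = x(m,m) = r(T^m(x))$, and $T^m(x)(0,n) = x(m,m+n)$.

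With these in hand, parts \autoref{item: alpha respects left action}, \autoref{item: alpha respects right action}, and \autoref{item: alpha is multi} each reduce to a one-line unpacking of the definitions of the left and right actions on $Y_m$ and of the product on $Y$. The only thing worth flagging is in \autoref{item: alpha is multi}, where the cocycle factors in the products on $X$ and $Y$ must be matched: evaluating $\alpha_{m+n}(fg)$ at $x$ produces the factor $c(x(0,m), x(m,m+n))$, which is exactly the factor appearing in $(\alpha_m(f)\alpha_n(g))(x)$ thanks to $T^m(x)(0,n) = x(m,m+n)$.

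For \autoref{item: alpha respects ip} I will expand the $Y_m$-inner product as a sum over $\{y \in \Lambda^\infty : T^m(y) = x\}$, parametrise this set as $\{\mu x : \mu \in \Lambda^m r(x)\}$ using the unique-extension remark following \autoref{prop: shift maps}, and recognise the resulting sum as $\la f, g \ra_{X_m}(r(x))$, which is $\alpha_0(\la f, g \ra_{X_m})(x)$ by the identity $\alpha_0(h)(x) = h(r(x))$.

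Finally, for \autoref{item: alpha_n is injective}, I will combine \autoref{item: alpha respects ip} with \autoref{rem: infinite paths coming into every vertex}, which (since $\Lambda$ is source-free) guarantees that $r(\Lambda^\infty) = \Lambda^0$. If $\alpha_n(f) = 0$, then $\alpha_0(\la f, f \ra_{X_n})$ vanishes on $\Lambda^\infty$, so $\la f, f \ra_{X_n}$ vanishes on $r(\Lambda^\infty) = \Lambda^0$, forcing $f = 0$ in $X_n$. The entire argument is essentially bookkeeping; the only point requiring any care is the use of \autoref{rem: infinite paths coming into every vertex} to lift pointwise information on infinite paths back to the finite-path side, which underpins both \autoref{item: alpha respects ip} and \autoref{item: alpha_n is injective}.
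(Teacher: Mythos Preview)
Your proposal is correct and follows essentially the same approach as the paper: direct pointwise verification using the identities $\alpha_m(f)(x)=f(x(0,m))$, $T^m(x)(0,n)=x(m,m+n)$, and $r(\Lambda^\infty)=\Lambda^0$. The only minor difference is in \autoref{item: alpha_n is injective}: the paper argues directly from $\alpha_n(f_1)=\alpha_n(f_2)\Rightarrow f_1(x(0,n))=f_2(x(0,n))$ together with \autoref{rem: infinite paths coming into every vertex}, whereas you route through \autoref{item: alpha respects ip} and the injectivity of $\alpha_0$; both arguments ultimately rest on the same surjectivity of $\rho_{n,\infty}$, and yours has the mild advantage of not needing to justify the pointwise formula for $\alpha_n$ beyond $C_c(\Lambda^n)$.
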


\begin{proof}
For \autoref{item: alpha is multi}, fix $x\in\Lambda^\infty$. Then
\begin{align*}
\alpha_{m+n}(fg)(x)&= (fg)(x(0,m+n)) \\
&= c(x(0,m),x(m,m+n))f(x(0,m))g(x(m,m+n)) \\
&= c(x(0,m),x(m,m+n))\alpha_m(f)(x)\alpha_n(g)(T^m(x)) \\
&= (\alpha_m(f)\alpha_n(g))(x),
\end{align*}
and so \autoref{item: alpha is multi} holds. Properties \autoref{item: alpha respects left action}--\autoref{item: alpha respects ip} follow from similarly straightforward calculations. For \autoref{item: alpha_n is injective}, suppose that $\alpha_n(f_1) = \alpha_n(f_2)$ for some $f_1, f_2 \in X_n$. Then for all $x \in \Lambda^\infty$, we have $f_1(x(0,n)) = f_2(x(0,n))$, and it follows by \autoref{rem: infinite paths coming into every vertex} that $f_1 = f_2$.
\end{proof}

Each $\alpha_n$ induces a homomorphism between the algebras $\KK(X_n)$ and $\KK(Y_n)$ of generalised compact operators.

\begin{lemma} \label{lem: the alpha KK map}
For each $n\in \N^k$, there is a homomorphism $\alpha_n^\KK \colon \KK(X_n)\to \KK(Y_n)$ satisfying $\alpha_n^\KK(\Theta_{f,g})=\Theta_{\alpha_n(f),\alpha_n(g)}$ for all $f,g\in X_n$.
\end{lemma}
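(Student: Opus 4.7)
The plan is to construct $\alpha_n^\KK$ via the interior tensor product associated to the $*$-homomorphism $\alpha_0$, followed by a unitary identification with $Y_n$. First observe that $\alpha_0\colon C_0(\Lambda^0) \to C_0(\Lambda^\infty)$ is given by $\alpha_0(g) = g\circ r$ and is an injective $*$-homomorphism, injectivity following from \autoref{rem: infinite paths coming into every vertex} (which gives surjectivity of $r\colon\Lambda^\infty\to\Lambda^0$). Form the interior tensor product $E_n \coloneqq X_n \otimes_{\alpha_0} C_0(\Lambda^\infty)$, a Hilbert $C_0(\Lambda^\infty)$-module. By standard Hilbert-module theory, there is an induced $*$-homomorphism $\Psi_n\colon \KK(X_n) \to \KK(E_n)$ characterised on rank-one operators by $\Psi_n(\Theta_{f,g}) = T_f T_g^*$, where $T_f\colon C_0(\Lambda^\infty)\to E_n$ is $T_f(b) \coloneqq f\otimes b$.

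Next, using \autoref{item: alpha respects right action} and \autoref{item: alpha respects ip} of \autoref{lem: props of alpha}, the formula $V_n(f \otimes b) \coloneqq \alpha_n(f)\cdot b$ defines an isometric $C_0(\Lambda^\infty)$-module map $V_n\colon E_n \to Y_n$. The technical heart of the proof is showing that $V_n$ has dense image; combined with isometry and completeness of $E_n$, this yields that $V_n$ is a unitary. I would reduce to approximating elements of $F_{\Lambda_{\infty, n}}$, which spans a dense subset of $Y_n$ by \autoref{lem: nice functions for top graph mods}. Given $\phi\in F_{\Lambda_{\infty, n}}$, cover $\supp(\phi)$ by finitely many basic open sets $Z(U)$ with $\overline{U}$ a compact $s$-section in $\Lambda^n$ (using \autoref{lem: nice nbhds of infinite paths}); a partition of unity reduces to the case $\supp(\phi) \subseteq Z(U)$. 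On $Z(U)$ the map $x \mapsto (x(0,n), T^n(x))$ is a homeomorphism onto its image in $\Lambda^n \times \Lambda^\infty$ (using injectivity of $T^n|_{Z(U)}$ from \autoref{prop: shift maps}), and a Stone--Weierstrass argument on this product space approximates $\phi$ uniformly by sums $\sum_j F_j(x(0,n))\,b_j(T^n(x)) = \sum_j V_n(F_j\otimes b_j)(x)$. Compactness of supports upgrades uniform convergence to convergence in the module norm, establishing density.

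With $V_n$ a unitary, I define $\alpha_n^\KK\colon \KK(X_n)\to \KK(Y_n)$ by $\alpha_n^\KK(T) \coloneqq V_n\Psi_n(T)V_n^*$. This is a composition of $*$-homomorphisms, since conjugation by a unitary identifies $\KK(E_n)$ with $\KK(Y_n)$. The formula on rank-one operators follows from a short calculation using an approximate unit $(e_\mu)$ for $C_0(\Lambda^\infty)$:
\[
\alpha_n^\KK(\Theta_{f,g}) = V_n\Psi_n(\Theta_{f,g})V_n^* = \lim_\mu V_n \Theta_{f\otimes e_\mu,\,g\otimes e_\mu} V_n^* = \lim_\mu \Theta_{\alpha_n(f)\cdot e_\mu,\,\alpha_n(g)\cdot e_\mu} = \Theta_{\alpha_n(f), \alpha_n(g)}.
\]
The main obstacle is the density step: it relies on partition-of-unity plus Stone--Weierstrass reasoning entirely analogous to the surjectivity argument in the proof of \autoref{prop: infinite path product system}.
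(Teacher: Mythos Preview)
Your approach is correct and takes a genuinely different route from the paper's. The paper proceeds by a direct hands-on estimate: working with the spanning sets $F_{\Lambda_n}$ and $F_{\Lambda_{\infty,n}}$ from \autoref{cor: new compacts}, it shows by an explicit pointwise computation that the assignment $\sum_i \Theta_{f_i,g_i} \mapsto \sum_i \Theta_{\alpha_n(f_i),\alpha_n(g_i)}$ is norm-decreasing, hence extends to a bounded linear map on $\KK(X_n)$; multiplicativity is then verified separately from the identities in \autoref{lem: props of alpha}. Your argument instead recognises $Y_n$ as the induced module $X_n \otimes_{\alpha_0} C_0(\Lambda^\infty)$ via the unitary $V_n$, and transports the standard tensor-product map $\KK(X_n)\to\KK(E_n)$ through this identification. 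This is more structural---it explains \emph{why} the homomorphism exists, and the $*$-homomorphism property comes for free---but the work is shifted into the density step for $V_n$, which the paper's direct norm estimate avoids entirely. One minor point: your approximate-unit identity $\Psi_n(\Theta_{f,g}) = \lim_\mu \Theta_{f\otimes e_\mu,\, g\otimes e_\mu}$ requires norm convergence, which is not immediate for non-unital $C_0(\Lambda^\infty)$; a cleaner fix is Cohen--Hewitt factorisation $f=f'\cdot a$, $g=g'\cdot a'$ in the right $C_0(\Lambda^0)$-module $X_n$, which gives $\Theta_{f,g}\otimes 1 = \Theta_{f'\otimes\alpha_0(a),\, g'\otimes\alpha_0(a')}$ exactly and hence $V_n(\Theta_{f,g}\otimes 1)V_n^* = \Theta_{\alpha_n(f),\alpha_n(g)}$ without any limit.
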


\begin{proof}
Recall from \autoref{notation: F_E} that $F_{\Lambda_n}$ denotes the set of functions $f\in C_c(\Lambda^n)$ such that $\supp(f)$ is an $s|_{\Lambda^n}$-section, and $F_{\Lambda_{\infty,n}}$ denotes the set of functions $h \in C_c(\Lambda^\infty)$ such that $\supp(h)$ is a $T^n$-section. We know from \autoref{cor: new compacts} that $\KK(X_n) = \clspan\{\Theta_{f_1,f_2} : f_1,f_2\in F_{\Lambda_n}\}$, and $\KK(Y_n) = \clspan\{\Theta_{h_1,h_2} : h_1,h_2\in F_{\Lambda_{\infty,n}}\}$. 

We claim that the map $\sum_{i=1}^m\Theta_{f_i,g_i}\mapsto \sum_{i=1}^m\Theta_{\alpha_n(f_i),\alpha_n(g_i)}$, for $f_i,g_i\in F_{\Lambda_n}$, is norm decreasing. We have
\[
\Big\lv \sum_{i=1}^m \Theta_{\alpha_n(f_i),\alpha_n(g_i)} \Big\rv = \sup\Big\{ \Big\lav \Big( \sum_{i=1}^m \Theta_{\alpha_n(f_i),\alpha_n(g_i)} \Big)(h)(x)\Big\rav\, :\, h\in F_{\Lambda_{\infty,n}},\, \lv h\rv_{Y_n}\le 1,\, x\in \Lambda^\infty\Big\},
\]
and
\[
\Big\lv \sum_{i=1}^m \Theta_{f_i,g_i} \Big\rv = \sup\Big\{ \Big\lav \Big( \sum_{i=1}^m \Theta_{f_i,g_i} \Big)(l)(\mu)\Big\rav\, :\, l\in F_{\Lambda_n},\, \lv l\rv_{X_n}\le 1,\, \mu\in \Lambda^n\Big\}.
\]
Fix $h\in F_{\Lambda_{\infty,n}}$ with $\lv h \rv_{Y_n} \le 1$, and $x \in \Lambda^\infty$. Let $G\coloneqq \{ i : 1 \le i \le m,\, s|_{\supp(g_i)}^{-1}(x(n)) \ne \emptyset \}$. For all $i\in G$, let $\mu_i$ be the unique path in $s|_{\supp(g_i)}^{-1}(x(n)) \subseteq \Lambda^n$, and let $x_i\coloneqq \mu_iT^n(x)$. Then
\begin{align*}
\Big\lav \Big( \sum_{i=1}^m \Theta_{\alpha_n(f_i),\alpha_n(g_i)} \Big)(h)(x)\Big\rav &= \Big\lav \sum_{i=1}^m \Big(f_i(x(0,n))\sum_{ \substack{ y\in\Lambda^\infty, \\ T^n(y)=T^n(x)} }\overline{g_i(y(0,n))}h(y)\Big)\Big\rav \\
&= \Big\lav \sum_{i\in G} f_i(x(0,n))\overline{g_i(\mu_i)}h(x_i)\Big\rav \\
&= \lav f_j(x(0,n))\overline{g_j(\mu_j)}h(x_j) \rav,
\end{align*}
if $j$ is the unique element of $G$ such that $T^n|_{\supp(h)}^{-1}(T^n(x))=\{\mu_jT^n(x)\}$, and is zero if there is no such $j\in G$. We only have to deal with the former case, but in this case we choose $\mu\coloneqq x(0,n)$ and $l\in F_{\Lambda_n}$ with $\lv l\rv_{X_n} \le 1$ and $l(\mu_j)=1$. We then have 
\begin{align*}
\Big\lav \Big( \sum_{i=1}^m \Theta_{f_i,g_i} \Big)(l)(\mu)\Big\rav &= \Big\lav \sum_{i=1}^m \Big(f_i(\mu)\sum_{ \lambda\in\Lambda^ns(\mu) }\overline{g_i(\lambda)}l(\lambda)\Big)\Big\rav \\
&= \lav f_j(\mu)\overline{g_j(\mu_j)}l(\mu_j) \rav \\
&= \lav f_j(x(0,n))\overline{g_j(\mu_j)} \rav \\
&\ge \lav f_j(x(0,n))\overline{g_j(\mu_j)}h(x_j) \rav \\
&= \Big\lav \Big( \sum_{i=1}^m \Theta_{\alpha_n(f_i),\alpha_n(g_i)} \Big)(h)(x)\Big\rav .
\end{align*}
Therefore, we have 
\[
\Big\lv \sum_{i=1}^m \Theta_{\alpha_n(f_i),\alpha_n(g_i)} \Big\rv \le \Big\lv \sum_{i=1}^m \Theta_{f_i,g_i} \Big\rv,
\]
and the claim is proved. It follows that $\alpha_n^\KK(\sum_{i=1}^m\Theta_{f_i,g_i})\coloneqq \sum_{i=1}^m\Theta_{\alpha_n(f_i),\alpha_n(g_i)}$ extends to a linear map $\alpha_n^\KK \colon \KK(X_n)\to \KK(Y_n)$ satisfying $\alpha_n^\KK(\Theta_{f,g})=\Theta_{\alpha_n(f),\alpha_n(g)}$ for all $f,g\in X_n$. We now check that $\alpha_n^\KK$ is a homomorphism, using the identities in \autoref{lem: props of alpha}:
\begin{align*}
\alpha_n^\KK(\Theta_{f_1,f_2}\Theta_{g_1,g_2}) &= \alpha_n^\KK(\Theta_{(f_1\cdot\la f_2,g_1\ra_{X_n}),g_2}) \\ 
&= \Theta_{(\alpha_n(f_1)\cdot\alpha_0(\la f_2,g_1\ra_{X_n})),\alpha_n(g_2)} \\
&= \Theta_{(\alpha_n(f_1)\cdot \la\alpha_n(f_2),\alpha_n(g_1)\ra_{Y_n}),\alpha_n(g_2)} \\
&=\Theta_{\alpha_n(f_1),\alpha_n(f_2)}\Theta_{\alpha_n(g_1),\alpha_n(g_2)} \\
&=\alpha_n^\KK(\Theta_{f_1,f_2})\alpha_n^\KK(\Theta_{g_1,g_2}),
\end{align*}
for all $f_1,f_2,g_1,g_2\in X_n$.
\end{proof}

The key to proving that $\psi$ is Nica covariant is the following technical lemma. 

\begin{lemma} \label{lem: CLSV for X and Y}
Suppose that $\Lambda$ is a proper, source-free topological $k$-graph, and $c$ is a continuous $\T$-valued $2$-cocycle on $\Lambda$. Suppose that $X$ and $Y$ are the product systems from \autoref{prop: finite path product system} and \autoref{prop: infinite path product system}, respectively. Let $m,n \in \N^k$. For $i\in\{1,2\}$, let $f_i \in F_{\Lambda_m}$ and $g_i \in F_{\Lambda_n}$. Define $C\coloneqq \supp(f_2)\vee \supp(g_1)$, which is a compact subset of $\Lambda^{m\vee n}$. For each $p \in \{m,n\}$, let $\{V_i^p : 1\le i\le r_p\}$ be a finite open cover of $\tau_{p, m \vee n}(C) \subseteq \Lambda^{(m \vee n) - p}$ such that each $V_i^p$ is a precompact $s$-section. 
\begin{enumerate}[label=(\alph*)]
\item \label{item: iota-thetas in X} Let $\beta_1^p,\dotsc,\beta_{r_p}^p$ be a partition of unity subordinate to $\{V_i^p\cap \tau_{p,m\vee n}(C) : 1\le i\le r_p\}$, and fix functions $\gamma_i^p\colon \Lambda^{(m\vee n)-p}\to [0,1]$ such that each $\gamma_i^p|_{\tau_{p,m\vee n}(C)} = \sqrt{\beta_i^p}$ and each $\gamma_i^p$ vanishes off $V_i^p$. For $i \in \{1,\dotsc,r_m\}$ and $j \in \{1,\dotsc,r_n\}$, define $a_{ij},b_j\in C_c(\Lambda^{m \vee n}) \subseteq X_{m \vee n}$ by $a_{ij}\coloneqq f_1\big(\gamma_i^m\cdot\la f_2\gamma_i^m, g_1\gamma_j^n\ra_{X_{m\vee n}}\big)$, and $b_j\coloneqq g_2\gamma_j^n$.
Then
\begin{equation} \label{eq: iota-thetas in X}
\iota_m^{m\vee n}(\Theta_{f_1,f_2})\iota_n^{m\vee n}(\Theta_{g_1,g_2})=\sum_{i=1}^{r_m}\sum_{j=1}^{r_n} \Theta_{a_{ij},b_j}.
\end{equation}
\item \label{item: iota-thetas in Y} Let $\eta_1^p,\dotsc,\eta_{r_p}^p$ be a partition of unity subordinate to $\{Z(V_i^p)\cap Z(\tau_{p,m\vee n}(C)) : 1\le i\le r_p\}$, and fix functions $\xi_i^p\colon \Lambda^\infty \to [0,1]$ such that each $\xi_i^p|_{Z(\tau_{p,m\vee n}(C))} = \sqrt{\eta_i^p}$ and each $\xi_i^p$ vanishes off $Z(V_i^p)$. For $i \in \{1,\dotsc,r_m\}$ and $j \in \{1,\dotsc,r_n\}$, define $c_{ij},d_j\in C_c(\Lambda^\infty) \subseteq Y_{m \vee n}$ by $c_{ij}\coloneqq \alpha_m(f_1)\big(\xi_i^m\cdot\la \alpha_m(f_2)\xi_i^m, \alpha_n(g_1)\xi_j^n\ra_{Y_{m\vee n}}\big)$, and $d_j\coloneqq \alpha_n(g_2)\xi_j^n$.
Then
\begin{equation} \label{eq: iota-thetas in Y}
\iota_m^{m\vee n}(\Theta_{\alpha_m(f_1),\alpha_m(f_2)})\iota_n^{m\vee n}(\Theta_{\alpha_n(g_1),\alpha_n(g_2)})=\sum_{i=1}^{r_m}\sum_{j=1}^{r_n} \Theta_{c_{ij},d_j}.
\end{equation}
\end{enumerate}
\end{lemma}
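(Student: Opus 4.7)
The two parts have parallel structure, so the plan is to prove (a) in detail and then to note that (b) follows from the same argument after replacing finite paths with infinite paths, $X$-correspondences with $Y$-correspondences, and the sums over $s^{-1}(v)\subseteq\Lambda^p$ with sums over $(T^p)^{-1}(x)\subseteq\Lambda^\infty$.

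For (a), the operator identity would be verified by evaluating each side on an arbitrary $h\in X_{m\vee n}$ and then pointwise at $\lambda\in\Lambda^{m\vee n}$. The left side unfolds using the identification $X_{m\vee n}\cong X_n\otimes_{C_0(\Lambda^0)} X_{(m\vee n)-n}$ from \autoref{prop: finite path product system}, under which $\iota_n^{m\vee n}(\Theta_{g_1,g_2})$ acts as $\Theta_{g_1,g_2}\otimes\id$; evaluating at $\lambda$ produces a sum over paths $\nu\in\Lambda^n$ with $s(\nu)=r(\lambda(n,m\vee n))$, with cocycle factors arising from the formulas for the inner product and for the product in $X_{m\vee n}$. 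A second application of $\iota_m^{m\vee n}(\Theta_{f_1,f_2})$ produces an analogous sum over paths $\mu\in\Lambda^m$ together with further cocycle factors.

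For the right side, I would expand $\sum_{i,j}\Theta_{a_{ij},b_j}(h)(\lambda) = \sum_{i,j}a_{ij}(\lambda)\,\la b_j,h\ra_{X_{m\vee n}}(s(\lambda))$, substitute the definitions of $a_{ij}$ and $b_j$, and unpack the inner product $\la f_2\gamma_i^m,g_1\gamma_j^n\ra_{X_{m\vee n}}$ along with the products $f_1(\gamma_i^m\cdot(\cdot))$ and $g_2\gamma_j^n$ in $X_{m\vee n}$. The resulting expression contains factors $f_2$ and $g_1$ that force the indexing paths to lie in $C$, and the partition-of-unity identities $\sum_i\beta_i^m=1$ on $\tau_{m,m\vee n}(C)$ and $\sum_j\beta_j^n=1$ on $\tau_{n,m\vee n}(C)$ then collapse the sums over $i$ and $j$. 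After this collapse, both sides reduce to a single sum over $\kappa\in C$ with $s(\kappa)=s(\lambda)$, weighted by products of cocycle factors evaluated on factorisations of $\kappa$ and of $\lambda$.

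The main obstacle is this cocycle bookkeeping: each of the several inner products and products in $X_p$ contributes a cocycle factor, so the two sides accumulate a priori different products of cocycle values that must be shown to coincide. These agree after repeated applications of the $2$-cocycle identity \autoref{item: twist_identity} to the composable triples of paths obtained from the factorisations of $\kappa$ and $\lambda$ through the degrees $m$, $n$, and $m\vee n$. For (b), the same computation works after replacing $\beta_i^p$ with $\eta_i^p$ on $Z(\tau_{p,m\vee n}(C))$, using the inner product in $Y_p$, and repeating the cocycle manipulations.
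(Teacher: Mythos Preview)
Your approach is essentially the paper's: evaluate both sides on an arbitrary $h$ at a point, collapse the $i,j$ sums via the partition of unity, and compare. Two points of divergence are worth noting.

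First, the paper isolates a preparatory computation (\autoref{lem: CLSV5.12 for Y}) showing that, because $\supp(g)$ is an $s$-section, the expression $\iota_m^{m\vee n}(\Theta_{\alpha_m(f),\alpha_m(g)})(h)(z)$ is not a sum at all but a \emph{single} term involving the unique path $\lambda_{g,z(m)}\in\supp(g)$ with source $z(m)$. The same mechanism collapses every inner-product sum on the right-hand side: $\supp(f_2)$, $\supp(g_2)$, and the $V_i^p$ are all $s$-sections, so each sum over paths with prescribed source has at most one nonzero term. Your outline says the two sides ``reduce to a single sum over $\kappa\in C$ with $s(\kappa)=s(\lambda)$''; in fact they reduce to a single term, and tracking exactly which term via the $s$-section hypotheses is where the argument actually happens. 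Make this explicit rather than leaving it inside an unspecified sum.

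Second, your expectation that the $2$-cocycle identity \ref{item: twist_identity} must be invoked ``repeatedly'' is not borne out. In the paper's organisation the cocycle factors that accumulate on the left-hand side (denoted $\Xi_3\Xi_4$) and on the right-hand side (denoted $\Xi_2$) are literally the same product of scalars, matching by inspection with no appeal to \ref{item: twist_identity}. The cocycle enters only through the product formula in $X_{m\vee n}$ (or $Y_{m\vee n}$), and since both sides use that same product in the same places, the factors line up automatically. If your computation genuinely required the cocycle identity, that would indicate the expansion has been organised suboptimally; following the paper's route via \autoref{lem: CLSV5.12 for Y} avoids this.
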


\autoref{eq: iota-thetas in X} appears for the ``untwisted'' version of $X$ in \cite[Lemma~5.14]{CLSV2011}, and the proofs of \autoref{eq: iota-thetas in X} and \autoref{eq: iota-thetas in Y} follow in a similar way. We therefore only give a detailed proof of \autoref{lem: CLSV for X and Y}~\autoref{item: iota-thetas in Y}. We start with some notation (see \cite[Notation~5.10]{CLSV2011}) and a preparatory lemma (see \cite[Lemma~5.12]{CLSV2011}).

\begin{notation}
Let $m\in \N^k$ and $f\in F_{\Lambda_m}$. Then for each $v\in \Lambda^0$, the set $\osupp(f)\cap \Lambda^m v$ is either empty or consists of a single path. In the latter case, we will denote this path by $\lambda_{f,v}$.
\end{notation}

\begin{lemma} \label{lem: CLSV5.12 for Y}
Suppose that $\Lambda$ is a proper, source-free topological $k$-graph, $c$ is a continuous $\T$-valued $2$-cocycle on $\Lambda$, and $Y$ is the product system from \autoref{prop: infinite path product system}. Let $m,n\in \N^k$, and $f,g \in F_{\Lambda_m}$. Then for each $h\in Y_{m\vee n}$ and $z\in \Lambda^\infty$, we have
\begin{align*}
&\iota_m^{m\vee n}(\Theta_{\alpha_m(f),\alpha_m(g)})(h)(z) \\
& \hspace{1.5em}= c(z(0,m),z(m,m\vee n))\overline{ c(\lambda_{g,z(m)},z(m,m\vee n)) }f(z(0,m))\overline{g(\lambda_{g,z(m)})}h(\lambda_{g,z(m)}T^m(z)),
\end{align*}
if $\supp(g) \cap \Lambda^mz(m) \ne \emptyset$; otherwise, $\iota_m^{m\vee n}(\Theta_{\alpha_m(f),\alpha_m(g)})(h)(z) = 0$.
\end{lemma}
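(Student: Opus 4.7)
The plan is to verify the formula on a dense subspace of $Y_{m\vee n}$ by direct computation using the characterization $\iota_m^{m\vee n}(S)(xy) = (Sx)y$ and then extend by continuity. Both sides of the claimed identity are linear and continuous in $h \in Y_{m\vee n}$: the left-hand side because $\iota_m^{m\vee n}(\Theta_{\alpha_m(f), \alpha_m(g)})$ is a bounded operator and point evaluation satisfies $\lav \xi(w) \rav^2 \le \la \xi, \xi \ra_{Y_{m\vee n}}(T^{m\vee n}(w)) \le \lv \xi \rv^2_{Y_{m\vee n}}$ for any $\xi \in Y_{m\vee n}$, and the right-hand side because it depends on $h$ only through the single value $h(\lambda_{g,z(m)} T^m(z))$, to which the same pointwise bound applies. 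By the proof of \autoref{prop: infinite path product system}, the isomorphism $Y_m \otimes_{C_0(\Lambda^\infty)} Y_{(m\vee n) - m} \cong Y_{m\vee n}$ guarantees that the linear span of products $xy$ with $x \in Y_m$ and $y \in Y_{(m\vee n) - m}$ is dense in $Y_{m\vee n}$, so it suffices to check the formula for $h = xy$.

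For such $h$, I would start from $\iota_m^{m\vee n}(\Theta_{\alpha_m(f), \alpha_m(g)})(xy) = (\Theta_{\alpha_m(f), \alpha_m(g)}(x))\, y$ and evaluate at $z$ via the multiplication in $Y$, which produces a factor of $c(z(0, m), z(m, m\vee n))$. The key step is expanding
\[
(\Theta_{\alpha_m(f), \alpha_m(g)}(x))(z) = \alpha_m(f)(z) \cdot \la \alpha_m(g), x \ra_{Y_m}(T^m(z)),
\]
where the appearance of $T^m(z)$ rather than $z$ is forced because the right action of $C_0(\Lambda^\infty)$ on $Y_m$ uses the source map $T^m$ of the topological graph $\Lambda_{\infty, m}$. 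Parametrizing $w\in\Lambda^\infty$ with $T^m(w) = T^m(z)$ as $w = \mu\, T^m(z)$ for $\mu \in \Lambda^m$ with $s(\mu) = z(m)$ converts the inner product into the sum $\sum_{\mu \in \Lambda^m z(m)} \overline{g(\mu)}\, x(\mu\, T^m(z))$. Because $\supp(g)$ is an $s$-section, at most one such $\mu$ lies in $\supp(g)$, and this is $\lambda_{g, z(m)}$ when the nonempty case of the lemma applies; if no $\mu$ works the sum is empty and the whole expression vanishes, handling the second case of the lemma.

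To finish, in the nontrivial case I would re-express the factor $x(\lambda_{g, z(m)}\, T^m(z))\, y(T^m(z))$ as a value of $h$. Applying the multiplication rule for $xy \in Y_{m\vee n}$ at the point $\lambda_{g, z(m)}\, T^m(z)$ (whose initial $m$-segment is $\lambda_{g, z(m)}$ and whose next segment of degree $(m\vee n) - m$ equals $z(m, m\vee n)$) gives
\[
h(\lambda_{g, z(m)}\, T^m(z)) = c(\lambda_{g, z(m)}, z(m, m\vee n))\, x(\lambda_{g, z(m)}\, T^m(z))\, y(T^m(z)),
\]
so solving for the product of $x$ and $y$ introduces precisely the conjugate cocycle $\overline{c(\lambda_{g, z(m)}, z(m, m\vee n))}$ appearing in the statement. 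Assembling the pieces yields the claimed identity on elementary products, and hence on all of $Y_{m\vee n}$ by continuity. The main obstacle is really just bookkeeping: correctly identifying $z(m)$ (rather than $z(0)$) as the vertex over which the $s$-section $\supp(g)$ is cut, and tracking the two cocycle factors arising from the two applications of the product-system multiplication so that they combine to give the single net factor $c(z(0, m), z(m, m\vee n))\, \overline{c(\lambda_{g, z(m)}, z(m, m\vee n))}$ written in the lemma.
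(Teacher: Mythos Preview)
Your proposal is correct and follows essentially the same approach as the paper's proof: verify the identity on elementary products $h=h_1h_2$ with $h_1\in Y_m$ and $h_2\in Y_{(m\vee n)-m}$ by direct computation, use that $\supp(g)$ is an $s$-section to collapse the inner-product sum to a single term, rewrite the product of $h_1$ and $h_2$ values as a value of $h$ at the cost of a conjugate cocycle factor, and then extend by density. Your version is slightly more careful in spelling out why both sides are continuous in $h$, but the argument is otherwise the same.
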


\begin{proof}
For $h_1\in Y_m$, $h_2\in Y_{(m\vee n)-m}$, and $z\in \Lambda^\infty$, we have
\begin{align*}
& \iota_m^{m\vee n}(\Theta_{\alpha_m(f),\alpha_m(g)})(h_1h_2)(z) \\
& \hspace{1.5em} = c(z(0,m),z(m,m\vee n))f(z(0,m))\Big(\sum_{ \substack{ y\in \Lambda^\infty, \\ T^m(y)=T^m(z) }}\overline{g(y(0,m))}h_1(y)\Big)h_2(T^m(z)).
\end{align*}
If $\supp(g) \cap \Lambda^mz(m) \ne \emptyset$, then one term that will appear in the sum is $\overline{ g(\lambda_{g,z(m)})}h_1(\lambda_{g,z(m)}T^m(z))$, which corresponds to taking $y=\lambda_{g,z(m)}T^m(z)$. All other terms in this sum will be zero, since $\supp(g)$ is an $s$-section. Hence we can continue the calculation to get
\begin{align*}
&\iota_m^{m\vee n}(\Theta_{\alpha_m(f),\alpha_m(g)})(h_1h_2)(z) \\
& \hspace{0.3em} = c(z(0,m),z(m,m\vee n))f(z(0,m))\overline{ g(\lambda_{g,z(m)})}h_1(\lambda_{g,z(m)}T^m(z))h_2(T^m(z)) \\
& \hspace{0.3em} = c(z(0,m),z(m,m\vee n))f(z(0,m))\overline{ g(\lambda_{g,z(m)})}\overline{ c(\lambda_{g,z(m)},z(m,m\vee n)) }(h_1h_2)(\lambda_{g,z(m)}T^m(z)).
\end{align*}
On the other hand, if $\supp(g) \cap \Lambda^mz(m) = \emptyset$, then $\iota_m^{m\vee n}(\Theta_{\alpha_m(f),\alpha_m(g)})(h_1h_2)(z) = 0$. Since $Y_{m\vee n}=\clspan\{h_1h_2 : h_1\in Y_m,\, h_2\in Y_{(m\vee n) - m}\}$, the result follows.
\end{proof}

\begin{proof}[Proof of \autoref{lem: CLSV for X and Y}~\autoref{item: iota-thetas in Y}]
Fix $h \in Y_{m \vee n}$ and $z \in \Lambda^\infty$. The right-hand side of \autoref{eq: iota-thetas in Y} is given by
\begin{align*}
\Big(\sum_{i=1}^{r_m}\sum_{j=1}^{r_n} \Theta_{c_{ij},d_j}\Big)(h)(z) &= \sum_{i=1}^{r_m} \sum_{j=1}^{r_n} c_{ij}(z) \la d_j,h \ra_{Y_{m\vee n}}(T^{m\vee n}(z)) \\
&= \sum_{j=1}^{r_n} \Bigg( c(z(0,m),z(m,m\vee n))f_1(z(0,m)) \la d_j,h \ra_{Y_{m\vee n}}(T^{m\vee n}(z)) \\ 
&\hspace{7em} \Big(\sum_{i=1}^{r_m} \xi_i^m \cdot \la \alpha_m(f_2)\xi_i^m , \alpha_n(g_1)\xi_j^n \ra_{Y_{m\vee n}}\Big)(T^m(z))\Bigg) \numberthis \label{eq: calc0 for rhs of CLSV for Y} .
\end{align*}
Consider the expression $\big(\sum_{i=1}^{r_m}\xi_i^m\cdot\la \alpha_m(f_2)\xi_i^m, \alpha_n(g_1)\xi_j^n\ra_{Y_{m\vee n}}\big)(T^m(z))$ from above. We have 
\begin{align*}
& \Big(\sum_{i=1}^{r_m}\xi_i^m\cdot\la \alpha_m(f_2)\xi_i^m, \alpha_n(g_1)\xi_j^n\ra_{Y_{m\vee n}}\Big)(T^m(z)) \\
&\hspace{0.1em}= \sum_{i=1}^{r_m}\Big(\xi_i^m(T^m(z)) \sum_{ \substack{y\in \Lambda^\infty, \\ T^{m\vee n}(y)=T^{m\vee n}(z)} } \overline{(\alpha_m(f_2)\xi_i^m)(y)}(\alpha_n(g_1)\xi_j^n)(y)\Big) \\
&\hspace{0.1em}= \sum_{i=1}^{r_m}\Big(\xi_i^m(T^m(z)) \sum_{ \substack{y\in \Lambda^\infty, \\ T^{m\vee n}(y)=T^{m\vee n}(z)} } \overline{ c(y(0,m),y(m,m\vee n))f_2(y(0,m))\xi_i^m(T^m(y)) }(\alpha_n(g_1)\xi_j^n)(y)\Big).
\end{align*}
Since each $\xi_i^m$ is supported on $Z(V_i^m)$, we have $z(m, m \vee n) \in V_i^m$. If $y \in \Lambda^\infty$ with $T^m(y) \in \osupp(\xi_i^m)$ and $T^{m \vee n}(y) = T^{m \vee n}(z)$, then since $V_i^m$ is an $s$-section, we have $y(m, m \vee n) = z(m, m \vee n)$, and so $T^m(y) = T^m(z)$. Now, if $\osupp(f_2) \cap \Lambda^m z(m) \ne \emptyset$, then one term that will appear in the second sum of the final line of the above calculation is
\[
\overline{ c(\lambda_{f_2,z(m)},z(m,m\vee n))f_2(\lambda_{f_2,z(m)})\xi_i^m(T^m(z))}(\alpha_n(g_1)\xi_j^n)( \lambda_{f_2,z(m)}T^m(z)),
\]
which corresponds to taking $y=\lambda_{f_2,z(m)}T^m(z)$. All other terms in this sum will be zero, since $\supp(f_2)$ and $V_i^m$ are $s$-sections. Hence we can continue the calculation to get
\begin{align*}
\Big(\sum_{i=1}^{r_m}\xi_i^m\cdot&\la \alpha_m(f_2)\xi_i^m,\, \alpha_n(g_1)\xi_j^n\ra_{Y_{m\vee n}}\Big)(T^m(z)) \\
&= \sum_{i=1}^{r_m}(\xi_i^m(T^m(z)))^2\overline{ c(\lambda_{f_2,z(m)},z(m,m\vee n))f_2(\lambda_{f_2,z(m)})}(\alpha_n(g_1)\xi_j^n)( \lambda_{f_2,z(m)}T^m(z)) \\
&= \overline{ c(\lambda_{f_2,z(m)},z(m,m \vee n))f_2(\lambda_{f_2,z(m)})}(\alpha_n(g_1)\xi_j^n)( \lambda_{f_2,z(m)}T^m(z)). \numberthis \label{eq: calc1 for rhs of CLSV for Y}
\end{align*}
On the other hand, if $\osupp(f_2) \cap \Lambda^m z(m) = \emptyset$, then we have
\[
\Big(\sum_{i=1}^{r_m}\xi_i^m\cdot\la \alpha_m(f_2)\xi_i^m, \alpha_n(g_1)\xi_j^n\ra_{Y_{m\vee n}}\Big)(T^m(z)) = 0.
\]
We now substitute \autoref{eq: calc1 for rhs of CLSV for Y} into \autoref{eq: calc0 for rhs of CLSV for Y} to see that if $\osupp(f_2) \cap \Lambda^m z(m) \ne \emptyset$, then we have
\begin{align*}
\Big(\sum_{i=1}^{r_m}\sum_{j=1}^{r_n} \Theta_{c_{ij},d_j}\Big)(h)(z) = \sum_{j=1}^{r_n} \Big(&c(z(0,m),z(m,m\vee n)) f_1(z(0,m)) \la d_j,h \ra_{Y_{m\vee n}}(T^{m\vee n}(z)) \\ 
&\overline{ c(\lambda_{f_2,z(m)},z(m,m\vee n))f_2(\lambda_{f_2,z(m)})}(\alpha_n(g_1)\xi_j^n)( \lambda_{f_2,z(m)}T^m(z))\Big) \numberthis \label{eq: another step}.
\end{align*}
We define 
\[
w \coloneqq \lambda_{f_2,z(m)} T^m(z)\in\Lambda^\infty,\quad \beta \coloneqq w(n, m \vee n) \in \Lambda^{(m\vee n)-n},\ \text{ and }\ \, \delta \coloneqq z(m, m \vee n) \in \Lambda^{(m\vee n)-m}.
\]
Calculating $(\alpha_n(g_1)\xi_j^n)( \lambda_{f_2,z(m)}T^m(z))$ and factoring out terms that do not depend on $j$ in \autoref{eq: another step}, we see that
\begin{equation} \label{eq: factoring out non-j terms}
\Big(\sum_{i=1}^{r_m}\sum_{j=1}^{r_n} \Theta_{c_{ij},d_j}\Big)(h)(z) = \Xi_1 \Big( \sum_{j=1}^{r_n}\xi_j^n(T^n(w))\la d_j,h\ra_{Y_{m\vee n}}(T^{m\vee n}(z)) \Big),
\end{equation}
where 
\[
\Xi_1 \coloneqq c(z(0,m),\delta) \overline{ c(\lambda_{f_2,z(m)},\delta	)} c(w(0,n),\beta)f_1(z(0,m)) \overline{ f_2(\lambda_{f_2,z(m)})} g_1(w(0,n)).
\]
Since $T^{m \vee n}(z) = T^{m \vee n}(w)$, we have
\begin{align*}
\sum_{j=1}^{r_n}\xi_j^n(T^n&(w))\la d_j,h\ra_{Y_{m\vee n}}(T^{m\vee n}(z)) \\
&= \sum_{j=1}^{r_n}\Big(\xi_j^n(T^n(w))\sum_{\substack{ y\in\Lambda^\infty, \\ T^{m\vee n}(y)=T^{m\vee n}(w) }}\overline{ c(y(0,n),y(n,m\vee n))g_2(y(0,n))\xi_j^n(T^n(y))}h(y)\Big).
\end{align*}
Since each $\xi_j^n$ is supported on $Z(V_j^n)$, we have $w(n, m \vee n) \in V_j^n$. If $y \in \Lambda^\infty$ with $T^n(y) \in \osupp(\xi_j^n)$ and $T^{m \vee n}(y) = T^{m \vee n}(w)$, then since $V_j^n$ is an $s$-section, we have $y(n, m \vee n) = w(n, m \vee n) = \beta$, and so $T^n(y) = T^n(w)$. Now, if $\osupp(g_2) \cap \Lambda^n r(\beta) \ne \emptyset$, then one term that will appear in the second sum of the right-hand side of the above equation is
\[
\overline{c(\lambda_{g_2,r(\beta)},\beta)g_2(\lambda_{g_2,r(\beta)})\xi_j^n(T^n(w))} h(\lambda_{g_2,r(\beta)}T^n(w)),
\]
which corresponds to taking $y=\lambda_{g_2,r(\beta)}T^n(w)$. All other terms in this sum will be zero, since $\supp(g_2)$ and $V_j^n$ are $s$-sections. Hence we can continue the calculation to get
\begin{align*}
\sum_{j=1}^{r_n}\xi_j^n(T^n(w))\la d_j,h\ra_{Y_{m\vee n}}(T^{m\vee n}(z)) &= \sum_{j=1}^{r_n}(\xi_j^n(T^n(w)))^2\overline{c(\lambda_{g_2,r(\beta)},\beta)g_2(\lambda_{g_2,r(\beta)})} h(\lambda_{g_2,r(\beta)}T^n(w)) \\
&= \overline{c(\lambda_{g_2,r(\beta)},\beta)g_2(\lambda_{g_2,r(\beta)})} h(\lambda_{g_2,r(\beta)}T^n(w)). \numberthis \label{eq: calc2 for rhs of CLSV for Y}
\end{align*}
On the other hand, if $\osupp(g_2) \cap \Lambda^n r(\beta) = \emptyset$, then we have
\[
\sum_{j=1}^{r_n}\xi_j^n(T^n(w))\la d_j,h\ra_{Y_{m\vee n}}(T^{m\vee n}(z)) = 0.
\]
We can now use \autoref{eq: factoring out non-j terms} and \autoref{eq: calc2 for rhs of CLSV for Y} to conclude that, when $\osupp(f_2) \cap \Lambda^m z(m) \ne \emptyset$ and $\osupp(g_2) \cap \Lambda^n r(\beta) \ne \emptyset$, we have  
\begin{equation} \label{eq: RHS = Xi_2 times stuff}
\Big(\sum_{i=1}^{r_m}\sum_{j=1}^{r_n} \Theta_{c_{ij},d_j}\Big)(h)(z) = \Xi_2\, h(\lambda_{g_2,r(\beta)}T^n(w)),
\end{equation}
where
\[
\Xi_2 \coloneqq \Xi_1\,\overline{c(\lambda_{g_2,r(\beta)},\beta)g_2(\lambda_{g_2,r(\beta)})}.
\]

For the left-hand side of \autoref{eq: iota-thetas in Y}, we first apply \autoref{lem: CLSV5.12 for Y} with $f=f_1$, $g=f_2$ to see that $\iota_m^{m\vee n}(\Theta_{\alpha_m(f_1),\alpha_m(f_2)})\Big(\iota_n^{m\vee n}(\Theta_{\alpha_n(g_1),\alpha_n(g_2)})(h)\Big)(z)=0$ when $\osupp(f_2) \cap \Lambda^m z(m) = \emptyset$, and is otherwise given by
\[
\iota_m^{m\vee n}(\Theta_{\alpha_m(f_1),\alpha_m(f_2)})\Big(\iota_n^{m\vee n}(\Theta_{\alpha_n(g_1),\alpha_n(g_2)})(h)\Big)(z) = \Xi_3\, \iota_n^{m\vee n}(\Theta_{\alpha_n(g_1),\alpha_n(g_2)})(h)(\lambda_{f_2,z(m)} T^m(z)),
\]
where
\[
\Xi_3 \coloneqq c(z(0,m),\delta)\overline{c(\lambda_{f_2,z(m)},\delta)} f_1(z(0,m)) \overline{ f_2(\lambda_{f_2,z(m)}) }.
\]
We now apply \autoref{lem: CLSV5.12 for Y} with $f=g_1$ and $g=g_2$. Recall that $w = \lambda_{f_2,z(m)} T^m(z)$ and $\beta = w(n, m \vee n)$. If $\osupp(g_2) \cap \Lambda^n r(\beta) = \emptyset$, then $\iota_n^{m\vee n}(\Theta_{\alpha_n(g_1),\alpha_n(g_2)})(h)(w)=0$. If $\osupp(g_2) \cap \Lambda^n r(\beta) \ne \emptyset$, then we have
\[
\iota_n^{m\vee n}(\Theta_{\alpha_n(g_1),\alpha_n(g_2)})(h)(w) = \Xi_4\, h(\lambda_{g_2,r(\beta)}T^n(w)),
\]
where
\[
\Xi_4 \coloneqq c(w(0,n),\beta)\overline{c(\lambda_{g_2,r(\beta)},\beta)}g_1(w(0,m))\overline{g_2(\lambda_{g_2,r(\beta)})}.
\]
Hence we have
\begin{equation} \label{eq: LHS = Xi_2 times stuff}
\iota_m^{m\vee n}(\Theta_{\alpha_m(f_1),\alpha_m(f_2)})\Big(\iota_n^{m\vee n}(\Theta_{\alpha_n(g_1),\alpha_n(g_2)})(h)\Big)(z)= \Xi_3\,\Xi_4\, h(\lambda_{g_2,r(\beta)}T^n(w)).
\end{equation}
By comparing \autoref{eq: RHS = Xi_2 times stuff} with \autoref{eq: LHS = Xi_2 times stuff}, we see that $\Xi_3\,\Xi_4 = \Xi_2$, and hence both sides of \autoref{eq: iota-thetas in Y} agree.
\end{proof}

\begin{lemma} \label{lem: identity with alphas and Thetas}
Suppose that $\Lambda$ is a proper, source-free topological $k$-graph, and $c$ is a continuous $\T$-valued $2$-cocycle on $\Lambda$. Suppose that $X$ and $Y$ are the product systems from \autoref{prop: finite path product system} and \autoref{prop: infinite path product system}, respectively. Let $m,n \in \N^k$. For $i \in \{1,2\}$, let $f_i \in F_{\Lambda_m}$ and $g_i \in F_{\Lambda_n}$. Then we have
\[
\alpha_{m\vee n}^\KK\big(\iota_m^{m\vee n}(\Theta_{f_1,f_2})\iota_n^{m\vee n}(\Theta_{g_1,g_2})\big) = \iota_m^{m\vee n}(\Theta_{\alpha_m(f_1),\alpha_m(f_2)})\iota_n^{m\vee n}(\Theta_{\alpha_n(g_1),\alpha_n(g_2)}).
\] 
\end{lemma}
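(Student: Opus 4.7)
The plan is to apply Lemma~\ref{lem: CLSV for X and Y}~\autoref{item: iota-thetas in X} to rewrite the argument of $\alpha_{m\vee n}^\KK$ as a finite sum $\sum_{i,j}\Theta_{a_{ij},b_j}$, push $\alpha_{m\vee n}^\KK$ through using Lemma~\ref{lem: the alpha KK map}, and then apply Lemma~\ref{lem: CLSV for X and Y}~\autoref{item: iota-thetas in Y}. The bridge between the two applications will be built on \emph{compatible choices} of partition-of-unity data, so that $\alpha_{m\vee n}(a_{ij})=c_{ij}$ and $\alpha_{m\vee n}(b_j)=d_j$ hold on the nose.

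First I would fix open covers $\{V_i^p : 1\le i\le r_p\}$ of $\tau_{p,m\vee n}(C)$ by precompact $s$-sections, a subordinate partition of unity $\beta_i^p$ of $\tau_{p,m\vee n}(C)$, and functions $\gamma_i^p\colon \Lambda^{(m\vee n)-p}\to[0,1]$ exactly as in Lemma~\ref{lem: CLSV for X and Y}~\autoref{item: iota-thetas in X}. I would then \emph{define} the $Y$-side data by pulling back along $\rho_{(m\vee n)-p,\infty}$: set
\[
\xi_i^p(y)\coloneqq \gamma_i^p\big(y(0,(m\vee n)-p)\big) \quad\text{and}\quad \eta_i^p\coloneqq (\xi_i^p)^2.
\]
Continuity of $\xi_i^p$ follows from continuity of $\rho_{(m\vee n)-p,\infty}$ (Lemma~\ref{lem: rho maps}); the support of $\xi_i^p$ lies in $Z(V_i^p)$, and on $Z(\tau_{p,m\vee n}(C))$ we recover $\sum_i \eta_i^p \equiv 1$ from $\sum_i \beta_i^p\equiv 1$ on $\tau_{p,m\vee n}(C)$. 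Thus the $\eta_i^p$ and $\xi_i^p$ satisfy the hypotheses of Lemma~\ref{lem: CLSV for X and Y}~\autoref{item: iota-thetas in Y}, so the right-hand side of the identity to be proved equals $\sum_{i,j}\Theta_{c_{ij},d_j}$.

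It then remains to verify $\alpha_{m\vee n}(a_{ij})=c_{ij}$ and $\alpha_{m\vee n}(b_j)=d_j$, since combining Lemma~\ref{lem: CLSV for X and Y}~\autoref{item: iota-thetas in X} with Lemma~\ref{lem: the alpha KK map} gives
\[
\alpha_{m\vee n}^\KK\Big(\iota_m^{m\vee n}(\Theta_{f_1,f_2})\iota_n^{m\vee n}(\Theta_{g_1,g_2})\Big)=\sum_{i,j}\Theta_{\alpha_{m\vee n}(a_{ij}),\alpha_{m\vee n}(b_j)}.
\]
The identity $\alpha_{m\vee n}(b_j)=d_j$ is a one-line expansion using $\xi_j^n(T^n(x))=\gamma_j^n(x(n,m\vee n))$ and the definition of multiplication in $X$ and $Y$. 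The identity $\alpha_{m\vee n}(a_{ij})=c_{ij}$ is the main technical step: after expanding $a_{ij}$ and $c_{ij}$ using the respective products, the problem reduces to showing
\[
\big\langle f_2\gamma_i^m,\,g_1\gamma_j^n\big\rangle_{X_{m\vee n}}(x(m\vee n))=\big\langle \alpha_m(f_2)\xi_i^m,\,\alpha_n(g_1)\xi_j^n\big\rangle_{Y_{m\vee n}}(T^{m\vee n}(x))
\]
for every $x\in\Lambda^\infty$. I expect this to be the main obstacle: the infinite paths $w$ with $T^{m\vee n}(w)=T^{m\vee n}(x)$ are parameterised by $\mu\in\Lambda^{m\vee n}x(m\vee n)$ via $w=\mu T^{m\vee n}(x)$, and under this change of variables the two cocycle factors, the two values $f_2$ and $g_1$, and the two $\gamma/\xi$ factors match coordinate-for-coordinate, so the inner-product sums coincide termwise. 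Once this identity is established, the proof is complete.
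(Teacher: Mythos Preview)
Your proposal is correct and follows essentially the same route as the paper: both arguments pull back the partition-of-unity data along $\rho_{(m\vee n)-p,\infty}$ (the paper writes this as $\xi_i^p=\alpha_{(m\vee n)-p}(\gamma_i^p)$, which is exactly your formula) and then verify $\alpha_{m\vee n}(a_{ij})=c_{ij}$, $\alpha_{m\vee n}(b_j)=d_j$. The only difference is that where you propose a direct termwise computation of the inner-product identity, the paper instead invokes the already-established structural properties of the $\alpha$ maps from \autoref{lem: props of alpha}~\autoref{item: alpha respects right action}--\autoref{item: alpha is multi} to obtain $\alpha_{m\vee n}(a_{ij})=c_{ij}$ in a few algebraic lines, which is slightly cleaner but amounts to the same verification.
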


\begin{proof}
Fix such functions $f_1,f_2, g_1$, and $g_2$, and define $C\coloneqq \supp(f_2)\vee \supp(g_1)$. For $p \in \{m,n\}$ and $i \in \{1,\dotsc,r_p\}$, let $V_i^p$, $\beta_i^p$, and $\gamma_i^p$ be as in \autoref{lem: CLSV for X and Y}~\autoref{item: iota-thetas in X}. Then we have
\[
\iota_m^{m\vee n}(\Theta_{f_1,f_2})\iota_n^{m\vee n}(\Theta_{g_1,g_2})=\sum_{i=1}^{r_m}\sum_{j=1}^{r_n} \Theta_{a_{ij},b_j}.
\]

Using \autoref{item: alpha respects right action}, \autoref{item: alpha respects ip}, and \autoref{item: alpha is multi} of \autoref{lem: props of alpha}, we get, for each $i \in \{1,\dotsc,r_m\}$ and $j \in \{1,\dotsc,r_n\}$,
\begin{align*}
\alpha_{m\vee n}(a_{ij}) &= \alpha_m(f_1)\alpha_{(m\vee n)-m}\big(\gamma_i^m\cdot\la f_2\gamma_i^m, g_1\gamma_j^n\ra_{X_{m \vee n}}\big) \\
&= \alpha_m(f_1)\big(\alpha_{(m\vee n)-m}(\gamma_i^m)\cdot \alpha_0\big( \la f_2\gamma_i^m, g_1\gamma_j^n\ra_{X_{m \vee n}}\big)\big) \\
&= \alpha_m(f_1)\big(\alpha_{(m\vee n)-m}(\gamma_i^m)\cdot \la \alpha_{m\vee n}(f_2\gamma_i^m), \alpha_{m\vee n}(g_1\gamma_j^n)\ra_{Y_{m \vee n}}\big) \\
&= \alpha_m(f_1)\big(\alpha_{(m\vee n)-m}(\gamma_i^m)\cdot \la \alpha_m(f_2)
\alpha_{(m\vee n)-m}(\gamma_i^m), \alpha_n(g_1)\alpha_{(m\vee n)-n}(\gamma_j^n)\ra_{Y_{m \vee n}}\big), \numberthis \label{eq: the alpha aij}
\end{align*}
and 
\begin{equation} \label{eq: the alpha bj}
\alpha_{m\vee n}(b_j)=\alpha_n(g_2)\alpha_{(m\vee n)-n}(\gamma_j^n).
\end{equation}
We claim that for each $p \in \{m,n\}$ and $i \in \{1,\dotsc,r_p\}$, the functions 
\[
\eta_i^p\coloneqq \alpha_{(m\vee n)-p}(\beta_i^p)|_{Z(\tau_{p,m\vee n}(C))}
\]
and $\xi_i^p\coloneqq \alpha_{(m\vee n)-p}(\gamma_i^p)$ satisfy the assumptions of \autoref{lem: CLSV for X and Y}~\autoref{item: iota-thetas in Y}. For each $x\in Z(\tau_{p,m\vee n}(C)) \setminus Z(V_i^p)$, we have $\eta_i^p(x) = \beta_i^p(x(0,(m\vee n)-p))=0$. For all $x\in Z(\tau_{p,m\vee n}(C))$, we have $\big(\sum_{i=1}^{r_p}\eta_i^p\big)(x)=\big(\sum_{i=1}^{r_p} \beta_i^p\big)(x(0,(m\vee n)-p)) = 1$. It follows that $\eta_1^p,\dotsc,\eta_{r_p}^p$ is a partition of unity subordinate to $\{Z(V_i^p)\cap Z(\tau_{p,m\vee n}(C)) : 1\le i\le r_p\}$. The functions $\xi_j^p\colon \Lambda^\infty \to [0,1]$ satisfy
\begin{align*}
\xi_i^p|_{Z(\tau_{p,m\vee n}(C))} &= \alpha_{(m\vee n)-p}(\gamma_i^p)|_{Z(\tau_{p,m\vee n}(C))} \\ 
&= \big(\alpha_{(m\vee n)-p}(\gamma_i^p|_{\tau_{p,m\vee n}(C)})\big)|_{Z(\tau_{p,m\vee n}(C))} \\ 
&= \big(\alpha_{(m\vee n)-p}\big(\sqrt{\beta_i^p}\big)\big)|_{Z(\tau_{p,m\vee n}(C))} \\
&= \sqrt{\alpha_{(m\vee n)-p}(\beta_i^p)|_{Z(\tau_{p,m\vee n}(C))}} \\
&= \sqrt{\eta_i^p},
\end{align*}
and vanish off $Z(V_i^p)$ because $\gamma_i^p$ vanishes off $V_i^p$. So the claim holds. We see from \autoref{eq: the alpha aij} and \autoref{eq: the alpha bj} that the $c_{ij}$ and $d_j$ from \autoref{lem: CLSV for X and Y}~\autoref{item: iota-thetas in Y} are given by $c_{ij}=\alpha_{m\vee n}(a_{ij})$ and $d_j=\alpha_{m\vee n}(b_j)$. We can then apply \autoref{lem: CLSV for X and Y} to get
\begin{align*}
\alpha_{m\vee n}^\KK\big(\iota_m^{m\vee n}(\Theta_{f_1,f_2})\iota_n^{m\vee n}(\Theta_{g_1,g_2})\big) &= \alpha_{m\vee n}^\KK\Big(\sum_{i=1}^{r_m}\sum_{j=1}^{r_n} \Theta_{a_{ij},b_j}\Big) \\
&= \sum_{i=1}^{r_m}\sum_{j=1}^{r_n}\Theta_{\alpha_{m\vee n}(a_{ij}),\alpha_{m\vee n}(b_j)} \\
&= \sum_{i=1}^{r_m}\sum_{j=1}^{r_n}\Theta_{c_{ij},d_j} \\
&= \iota_m^{m\vee n}(\Theta_{\alpha_m(f_1),\alpha_m(f_2)})\iota_n^{m\vee n}(\Theta_{\alpha_n(g_1),\alpha_n(g_2)}).\qedhere
\end{align*}
\end{proof}

\begin{proof}[Proof of \autoref{prop: NC repn of X}~\autoref{item: the NCR psi}]
We first claim that $\psi\colon X\to \NT(Y)$ given by $\psi_n \coloneqq i_{Y,n}\circ\alpha_n$ is a representation of $X$.The map $\psi_0$ is obviously a homomorphism of $C_0(\Lambda^0)$, and for each $n \in \N^k$, it is easy to see that the map $\psi_n$ is linear. To see that each $(\psi_n,\psi_0)$ is a representation of $X_n$, fix $g\in C_0(\Lambda^0)$ and $f\in X_n$. \autoref{lem: props of alpha}~\autoref{item: alpha respects left action} gives
\[
\psi_n(g\cdot f)=i_{Y,n}(\alpha_n(g\cdot f))=i_{Y,n}(\alpha_0(g)\cdot\alpha_n(f))=i_{Y,0}(\alpha_0(g))i_{Y,n}(\alpha_n(f))=\psi_0(g)\psi_n(f),
\]
and \autoref{lem: props of alpha}~\autoref{item: alpha respects right action} gives
\[
\psi_n(f\cdot g)=i_{Y,n}(\alpha_n(f\cdot g))=i_{Y,n}(\alpha_n(f)\cdot\alpha_0(g))=i_{Y,n}(\alpha_n(f))i_{Y,0}(\alpha_0(g))=\psi_n(f)\psi_0(g).
\]
Now fix $f_1,f_2\in X_n$. \autoref{lem: props of alpha}~\autoref{item: alpha respects ip} gives
\begin{align*}
\psi_n(f_1)^*\psi_n(f_2) &= i_{Y,n}(\alpha_n(f_1))^*i_{Y,n}(\alpha_n(f_2)) \\
&= i_{Y,0}(\la \alpha_n(f_1),\alpha_n(f_2)\ra_{Y_n}) \\
&= i_{Y,0}(\alpha_0(\la f_1,f_2\ra_{X_n})) \\
&= \psi_0(\la f_1,f_2\ra_{X_n}).
\end{align*}
Hence each $(\psi_n,\psi_0)$ is a representation of $X_n$. For each $m,n\in \N^k$, $f\in X_m$, and $g\in X_n$ we use \autoref{lem: props of alpha}~\autoref{item: alpha is multi} and that $i_Y$ is a representation to get
\[
\psi_{m+n}(fg)=i_{Y,m+n}(\alpha_{m+n}(fg))=i_{Y,m+n}(\alpha_m(f)\alpha_n(g))=i_{Y,m}(\alpha_m(f))i_{Y,n}(\alpha_n(g))=\psi_m(f)\psi_n(g).
\]
Hence $\psi\colon X\to \NT(Y)$ is a representation. 

To see that $\psi$ is Nica covariant, first note that for each $n\in\N^k$ and $f,g\in X_n$, we have
\[
\psi^{(n)}(\Theta_{f,g}) = \psi_n(f)\psi_n(g)^* = i_{Y,n}(\alpha_n(f))i_{Y,n}(\alpha_n(g))^* = i_Y^{(n)}(\Theta_{\alpha_n(f),\alpha_n(g)}) = i_Y^{(n)}(\alpha_n^\KK(\Theta_{f,g})).
\]
It follows that $\psi^{(n)} = i_Y^{(n)} \circ \alpha_n^\KK$. For each $m,n\in\N^k$, $f_1,f_2\in F_{\Lambda_m}$, and $g_1,g_2\in F_{\Lambda_n}$, we now use the Nica covariance of $i_Y$, and \autoref{lem: identity with alphas and Thetas}, to get
\begin{align*}
\psi^{(m)}(\Theta_{f_1,f_2})\psi^{(n)}(\Theta_{g_1,g_2}) &= i_Y^{(m)}(\alpha_m^\KK(\Theta_{f_1,f_2})) i_Y^{(n)}(\alpha_n^\KK(\Theta_{g_1,g_2})) \\
&= i_Y^{(m)}(\Theta_{\alpha_m(f_1),\alpha_m(f_2)}) i_Y^{(n)}(\Theta_{\alpha_n(g_1),\alpha_n(g_2)}) \\
&= i_Y^{(m\vee n)}\big( \iota_m^{m\vee n}(\Theta_{\alpha_m(f_1),\alpha_m(f_2)})\iota_n^{m\vee n}(\Theta_{\alpha_n(g_1),\alpha_n(g_2)}) \big) \\
&= i_Y^{(m\vee n)}\big( \alpha_{m\vee n}^\KK\big(\iota_m^{m\vee n}(\Theta_{f_1,f_2})\iota_n^{m\vee n}(\Theta_{g_1,g_2})\big) \big) \\
&= \psi^{(m\vee n)}(\iota_m^{m\vee n}(\Theta_{f_1,f_2})\iota_n^{m\vee n}(\Theta_{g_1,g_2})) \numberthis \label{eq: Nica cov for nice Thetas}.
\end{align*}
Now, let $S\in \KK(X_m)$ and $T\in\KK(X_n)$, and fix $\varepsilon > 0$. If $T=0$, then 
\[
\psi^{(m)}(S)\psi^{(n)}(T) = 0 = \psi^{(m\vee n)}(\iota_m^{m\vee n}(S)\iota_n^{m\vee n}(T)).
\]
So assume that $T\not =0$. We use \autoref{cor: new compacts} to choose a nonzero $a\in \vecspan\{\Theta_{f,g} : f,g \in F_{\Lambda_m}\}$ such that $\lv S-a\rv < \varepsilon/(4\lv T\rv)$, and $b\in\vecspan \{\Theta_{f,g}: f,g \in F_{\Lambda_n}\}$ such that $\lv T-b\rv < \varepsilon/(4\lv a\rv)$. Then
\begin{align*}
& \lv \psi^{(m)}(S)\psi^{(n)}(T) - \psi^{(m\vee n)}(\iota_m^{m\vee n}(S)\iota_n^{m\vee n}(T)) \rv \\
& \hspace{2em} = \Big\lv \psi^{(m)}(S)\psi^{(n)}(T) - \psi^{(m)}(a)\psi^{(n)}(T) + \psi^{(m)}(a)\psi^{(n)}(T) - \psi^{(m)}(a)\psi^{(n)}(b) \\
& \hspace{4em} + \psi^{(m)}(a)\psi^{(n)}(b) - \psi^{(m\vee n)}( \iota_m^{m\vee n}(a)\iota_n^{m\vee n}(b) ) \\
& \hspace{4em} + \psi^{(m\vee n)}\big( \iota_m^{m\vee n}(a)\iota_n^{m\vee n}(b) - \iota_m^{m\vee n}(a)\iota_n^{m\vee n}(T) + \iota_m^{m\vee n}(a)\iota_n^{m\vee n}(T) -\iota_m^{m\vee n}(S)\iota_n^{m\vee n}(T) \big) \Big\rv \\
& \hspace{2em} \le \lv \psi^{(m)}(S)\psi^{(n)}(T) - \psi^{(m)}(a)\psi^{(n)}(T)\rv + \lv \psi^{(m)}(a)\psi^{(n)}(T) - \psi^{(m)}(a)\psi^{(n)}(b)\rv \\
& \hspace{4em} + \lv \psi^{(m)}(a)\psi^{(n)}(b) - \psi^{(m\vee n)}( \iota_m^{m\vee n}(a)\iota_n^{m\vee n}(b) )\rv \\
& \hspace{4em} + \lv \iota_m^{m\vee n}(a)\iota_n^{m\vee n}(b) - \iota_m^{m\vee n}(a)\iota_n^{m\vee n}(T)\rv + \lv \iota_m^{m\vee n}(a)\iota_n^{m\vee n}(T) -\iota_m^{m\vee n}(S)\iota_n^{m\vee n}(T)\rv \\
& \hspace{2em} \le 2\lv T\rv\, \lv S-a\rv + 2 \lv a\rv\, \lv T - b\rv + \lv \psi^{(m)}(a)\psi^{(n)}(b) - \psi^{(m\vee n)}( \iota_m^{m\vee n}(a)\iota_n^{m\vee n}(b) )\rv \\
& \hspace{2em} < \epsilon + \lv \psi^{(m)}(a)\psi^{(n)}(b) - \psi^{(m\vee n)}( \iota_m^{m\vee n}(a)\iota_n^{m\vee n}(b) )\rv.
\end{align*}
It follows from \autoref{eq: Nica cov for nice Thetas} and the linearity of the maps involved that 
\[
\lv \psi^{(m)}(a)\psi^{(n)}(b) - \psi^{(m\vee n)}( \iota_m^{m\vee n}(a)\iota_n^{m\vee n}(b) )\rv=0.
\] 
Hence we have 
\[
\lv \psi^{(m)}(S)\psi^{(n)}(T) - \psi^{(m\vee n)}(\iota_m^{m\vee n}(S)\iota_n^{m\vee n}(T)) \rv < \varepsilon,
\]
and the Nica covariance of $\psi$ follows.
\end{proof}

\vspace{0.5em}
\section{The proof of our main theorem} \label{sec: the C*s}

We begin this section with the proof of \autoref{thm: main theorem}~\autoref{item: thm psi and the NT map}, which states that the homomorphism $\psi^{\NT}\colon \TT C^*(\Lambda,c) \to \NT(Y)$ induced from the Nica-covariant representation $\psi\colon X\to \NT(Y)$ given in \autoref{prop: NC repn of X}~\autoref{item: the NCR psi} is injective.

\begin{proof}[Proof of \autoref{thm: main theorem}~\autoref{item: thm psi and the NT map}]
Recall that $\psi^\NT$ satisfies $\psi^\NT\circ i_X=\psi$. To show that $\psi^\NT$ is injective, we aim to apply \autoref{thm: Fletcher uniqueness theorem}. We consider the Nica-covariant representation $\Psi$ from the proof of \cite[Theorem~3.2]{Fletcher2017} (see \autoref{thm: Fletcher uniqueness theorem}) applied to the product system $Y$; that is, 
\[
\Psi\coloneqq \Big( \FF_Y{-}\Ind_{C_0(\Lambda^\infty)}^{\LL(\FF_Y)}\pi \Big)\circ l,
\]
where $l$ is the Fock representation of $Y$ on the Fock space $\FF_Y\coloneqq \bigoplus_{n\in \N^k} Y_n$, $\pi$ is a faithful nondegenerate representation of $C_0(\Lambda^\infty)$ on some Hilbert space $\HH$, and $ \FF_Y{-}\Ind_{C_0(\Lambda^\infty)}^{\LL(\FF_Y)}\pi$ is the induced representation of $\LL(\FF_Y)$ on $\FF_Y\otimes_{C_0(\Lambda^\infty)}\HH$. In the proof of \cite[Theorem~3.2]{Fletcher2017}, it is shown that for each finite $K\subseteq \N^k {\setminus} \{0\}$ the representation $\varphi_{\Psi,K}$ of $C_0(\Lambda^\infty)$ on $\FF_Y\otimes_{C_0(\Lambda^\infty)}\HH$ given by
\[
\varphi_{\Psi,K}(f) \coloneqq \Psi_0(f)\prod_{n\in K} \big( 1_{\FF_Y\otimes_{C_0(\Lambda^\infty)}\HH} - P_n^\Psi)
\]
is faithful. Fix a finite subset $K\subseteq \N^k {\setminus\{0\}}$, and consider the representation 
\[
\omega\coloneqq \Psi^{\NT}\circ\psi \colon X\to B( \FF_Y\otimes_{C_0(\Lambda^\infty)}\HH).
\] 
We have 
\[
\omega_n = \Psi^\NT\circ \psi_n = \Psi^\NT\circ i_{Y,n}\circ \alpha_n = \Psi_n\circ \alpha_n,
\]
for each $n\in\N^k$. So each $\overline{\omega_n(X_n)(\FF_Y\otimes_{C_0(\Lambda^\infty)}\HH)}$ is a subspace of $\overline{\Psi_n(Y_n)(\FF_Y\otimes_{C_0(\Lambda^\infty)}\HH)}$, and it follows that
\begin{equation} \label{eq: defect projections}
\prod_{n\in K} \big( 1_{\FF_Y\otimes_{C_0(\Lambda^\infty)}\HH} - P_n^\omega) \ge \prod_{n\in K} \big( 1_{\FF_Y\otimes_{C_0(\Lambda^\infty)}\HH} - P_n^\Psi).
\end{equation}
For each nonzero $f\in C_0(\Lambda^0)$, we know from \autoref{lem: props of alpha}~\autoref{item: alpha_n is injective} that $\alpha_0(f) \ne 0$, and hence
\[
\omega_0(f) \prod_{n\in K} \big( 1_{\FF_Y\otimes_{C_0(\Lambda^\infty)}\HH} - P_n^\Psi) = \Psi_0(\alpha_0(f)) \prod_{n\in K} \big( 1_{\FF_Y\otimes_{C_0(\Lambda^\infty)}\HH} - P_n^\Psi)=\varphi_{\Psi,K}(\alpha_0(f)) \ne 0.
\]
It follows from \autoref{eq: defect projections} that
\[
\omega_0(f) \prod_{n\in K} \big( 1_{\FF_Y\otimes_{C_0(\Lambda^\infty)}\HH} - P_n^\omega) \ne 0.
\]
Hence 
\[
\varphi_{\omega,K}\colon f\mapsto \omega_0(f)\prod_{n\in K} \big( 1_{\FF_Y\otimes_{C_0(\Lambda^\infty)}\HH} - P_n^\omega)
\]
is a faithful representation of $C_0(\Lambda^0)$, and we can apply \autoref{thm: Fletcher uniqueness theorem} to see that
\[
\omega^\NT\colon \TT C^*(\Lambda,c) \to B(\FF_Y\otimes_{C_0(\Lambda^\infty)}\HH)
\]
is injective. Now, 
\[
\Psi^\NT\circ \psi^\NT \circ i_X = \Psi^\NT\circ \psi = \omega = \omega^\NT\circ i_X, 
\]
and hence $\omega^\NT = \Psi^\NT \circ \psi^\NT$. Since $\omega^\NT$ is injective, $\psi^\NT$ must also be injective.
\end{proof}

We need the following lemma for the proof of \autoref{thm: main theorem}~\autoref{item: thm zeta and the O map}. In this result, we identify $C_0(Z(U))$ with its image under the natural embedding in $C_0(\Lambda^\infty)$. For the statement and proof of this result, recall from \autoref{lem: tau maps} that each $\tau_{m,n}$ is a local homeomorphism, and from \autoref{lem: rho maps} that each of the maps $\rho_{m,n}$ and $\rho_{m,\infty}$ is continuous and proper.

\begin{lemma} \label{lem: C_0(Z(U)) SWT}
Suppose that $\Lambda$ is a proper, source-free topological $k$-graph. Fix $n\in \N^k$, and let $U$ be an open subset of $\Lambda^n$ such that $\overline{U}$ is a compact $s$-section. Define
\begin{align*}
\AA_U \coloneqq \{ f\circ\rho_{m,\infty} : m \in \N^k,~&m \ge n, f \in C_c(\Lambda^m) \text{ with } \osupp(f)\subseteq UV \text{ for } V \subseteq \Lambda^{m-n} \\
&\text{ a precompact open $s$-section such that } r(\overline{V}) \subseteq s(U) \}.
\end{align*}
Then $\vecspan\AA_U$ is uniformly dense in $C_0(Z(U))$.
\end{lemma}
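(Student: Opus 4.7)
The plan is to apply the Stone--Weierstrass theorem on the locally compact Hausdorff space $Z(U)$ (which is open in $\Lambda^\infty$ by \autoref{prop: Lambda^infty is a LCH space}). I will show that the closure $B \coloneqq \overline{\vecspan \AA_U}$ inside $C_0(Z(U))$ is a conjugation-closed subalgebra that separates points of $Z(U)$ and vanishes nowhere; these are the hypotheses for Stone--Weierstrass, and they will force $B = C_0(Z(U))$, i.e., $\vecspan \AA_U$ is uniformly dense. That $\vecspan \AA_U \subseteq C_c(Z(U))$ and is closed under complex conjugation is routine: for $f \circ \rho_{m,\infty} \in \AA_U$, properness of $\rho_{m,\infty}$ from \autoref{lem: rho maps} gives $\supp(f \circ \rho_{m,\infty}) = \rho_{m,\infty}^{-1}(\supp f)$ compact and contained in $Z(UV) \subseteq Z(U)$, and $\overline{f \circ \rho_{m,\infty}} = \overline f \circ \rho_{m,\infty}$ with $\overline f$ obeying the same support constraint as $f$.

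The crux is closure under multiplication. Given two elements $f_i \circ \rho_{m_i,\infty} \in \AA_U$ ($i = 1, 2$) with associated parameters $V_i \subseteq \Lambda^{m_i - n}$, set $m \coloneqq m_1 \vee m_2$ and define $g \in C_c(\Lambda^m)$ by $g(\lambda) \coloneqq f_1(\lambda(0,m_1))\,f_2(\lambda(0,m_2))$. The pointwise product equals $g \circ \rho_{m,\infty}$, and the unique factorisation property yields $\osupp g \subseteq UW$ where $W \coloneqq V_1\Lambda^{m-m_1} \cap V_2\Lambda^{m-m_2} \subseteq \Lambda^{m-n}$ is open with $r(W) \subseteq r(V_1) \subseteq s(U)$. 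Unfortunately $W$ need not be an $s$-section, so $g \circ \rho_{m,\infty}$ need not lie in $\AA_U$ directly. I will approximate in two steps: first fix $\varepsilon > 0$ and, using that $\{|g| \ge \varepsilon\}$ is a compact subset of the open set $UW$, choose $\chi \in C_c(UW)$ with $0 \le \chi \le 1$ and $\chi \equiv 1$ on $\{|g| \ge \varepsilon\}$, so $\|g - \chi g\|_\infty < \varepsilon$ and $\supp(\chi g)$ is a compact subset of $UW$. Then cover $\supp(\chi g)$ by finitely many open sets of the form $UV_{\tau_i}$: for each $\lambda \in \supp(\chi g)$, factor $\lambda = \mu\tau$ with $\mu \in U$ and $\tau \in W$, and use that $s|_{\Lambda^{m-n}}$ is a local homeomorphism (\autoref{lem: top graphs from top k-graph}) together with openness of $r^{-1}(s(U))$ (since $s(U)$ is open because $s$ is a local homeomorphism and $r$ is continuous) to produce a precompact open $s$-section $V_\tau \subseteq W$ containing $\tau$ with $\overline{V_\tau} \subseteq r^{-1}(s(U))$, so that $V_\tau$ satisfies the defining conditions for $\AA_U$. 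A partition of unity on $\supp(\chi g)$ subordinate to a finite subcover then splits $\chi g$ into a finite sum of functions each lying in $\AA_U$, so $\chi g \circ \rho_{m,\infty} \in \vecspan\AA_U$, and letting $\varepsilon \to 0$ places $g \circ \rho_{m,\infty}$ in $B$.

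Separation of points and nonvanishing follow from bump-function constructions. A $k$-graph morphism $x\colon\Omega_k \to \Lambda$ is determined by the tuple $(x(0,p))_{p\in\N^k}$ via unique factorisation, so any $x \ne y$ in $Z(U)$ satisfy $x(0,p) \ne y(0,p)$ for some $p \in \N^k$; replacing $p$ with $p \vee n$ gives $p \ge n$ while keeping $x(0,p) \ne y(0,p)$. Since $r(x(n,p)) = s(x(0,n)) \in s(U)$, which is open, I can pick a precompact open $s$-section $V \subseteq \Lambda^{p-n}$ containing $x(n,p)$ with $\overline V \subseteq r^{-1}(s(U))$, and then a bump $f \in C_c(\Lambda^p)$ with $\osupp f \subseteq UV$, $f(x(0,p)) = 1$, and $y(0,p) \notin \supp f$ (using Hausdorffness of $\Lambda^p$). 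The function $f \circ \rho_{p,\infty}$ separates $x$ from $y$. Nonvanishing at a prescribed $x \in Z(U)$ is the analogous construction with $p = n$ and $V$ a small open neighbourhood of $s(x(0,n))$ in $s(U)$.

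The main obstacle is the multiplicative closure step: the obvious support constraint produced by multiplying two elements of $\AA_U$ lives in $UW$ for a set $W$ that is \emph{not} an $s$-section, so one cannot stay inside $\AA_U$ in one shot. Overcoming this requires the two-step refinement above---cutting $g$ down to compact support inside $UW$ at a cost of $\varepsilon$, and then using a partition of unity adapted to open $s$-section neighbourhoods whose closures satisfy the range condition $r(\overline{V_{\tau_i}}) \subseteq s(U)$---and absorbing the resulting limit into the closure $B$ before invoking Stone--Weierstrass.
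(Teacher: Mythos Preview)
Your proof is correct and follows the same Stone--Weierstrass template as the paper, but the multiplicative-closure step is organised differently. The paper proves that $\vecspan\AA_U$ is \emph{exactly} a subalgebra: given $f\circ\rho_{m,\infty}$ and $g\circ\rho_{p,\infty}$ it works with the compact set $C=\supp(f)\vee\supp(g)\subseteq\Lambda^{m\vee p}$, observes that $r(\tau_{n,m\vee p}(C))\subseteq r(\overline{V_m})\cap r(\overline{V_p})\subseteq s(U)$, covers $\tau_{n,m\vee p}(C)$ by finitely many precompact open $s$-sections $V_i$ with $r(\overline{V_i})\subseteq s(U)$, and uses a partition of unity on $\tau_{n,m\vee p}(C)$ to split the product $h(\lambda)=f(\lambda(0,m))g(\lambda(0,p))$ into a finite sum $\sum h_i$ with each $h_i\circ\rho_{m\vee p,\infty}\in\AA_U$. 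Your route instead only shows the \emph{closure} $B$ is multiplicative: you first trim with a cutoff $\chi$ at cost $\varepsilon$ to force the support inside $UW$, then run a partition-of-unity decomposition on $\supp(\chi g)$, and finally let $\varepsilon\to 0$. Both arguments are valid for Stone--Weierstrass; the paper's avoids the limiting step by exploiting the hypothesis $r(\overline{V_i})\subseteq s(U)$ (which survives passage to $\supp f_i\subseteq\overline U\,\overline{V_i}$) more directly, while yours is more hands-on and does not appeal to the compactly-aligned set $C$. Your point-separation and nonvanishing arguments are essentially the same as the paper's, with the cosmetic difference that the paper first reduces to $x(n,m)\neq y(n,m)$ via the $s$-section property of $U$ before building disjoint $V_x,V_y$, whereas you separate directly in $\Lambda^p$.
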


\begin{proof}
We will use the Stone--Weierstrass theorem. We first need to show that $\vecspan\AA_U$ is a subalgebra of $C_0(Z(U))$. Let $m,p\in \N^k$ with $m,p\ge n$; $f\in C_c(\Lambda^m)$ with $\osupp(f)\subseteq UV_m$ for $V_m \subseteq \Lambda^{m-n}$ a precompact open $s$-section such that $r(\overline{V_m}) \subseteq s(U)$; and $g\in C_c(\Lambda^p)$ with $\osupp(g)\subseteq UV_p$ for $V_p \subseteq \Lambda^{p-n}$ a precompact open $s$-section such that $r(\overline{V_p}) \subseteq s(U)$. Then we have
\[
\osupp(f \circ \rho_{m,\infty}) = \rho_{m,\infty}^{-1}(\osupp(f)) \subseteq \rho_{m,\infty}^{-1}(UV_m) = Z(UV_m) \subseteq Z(U).
\]
Also, since $\rho_{m,\infty}$ is a continuous proper map and $\supp(f \circ \rho_{m,\infty}) \subseteq \rho_{m,\infty}^{-1}(\supp(f))$, we have $f \circ \rho_{m,\infty} \in C_c(\Lambda^\infty)$. Hence $\vecspan\AA_U \subseteq C_0(Z(U))$.

We now claim that $(f\circ \rho_{m,\infty})(g\circ \rho_{p,\infty}) \in \vecspan\AA_U$. Since composition is a continuous map and $\overline{U}$, $\overline{V_m}$, and $\overline{V_p}$ are compact, $\overline{U}\,\overline{V_m}$ and $\overline{U}\,\overline{V_p}$ are closed subsets of $\Lambda^m$ and $\Lambda^p$, respectively. Hence $\supp(f) \subseteq \overline{U}\,\overline{V_m}$ and $\supp{g} \subseteq \overline{U}\,\overline{V_p}$. The set $C\coloneqq \supp(f)\vee \supp(g)$ is a compact subset of $\Lambda^{m \vee p}$ and is contained in $\overline{U}\,\overline{V_m} \vee \overline{U}\,\overline{V_p}$. We therefore have
\[
r(\tau_{n,m \vee p}(C)) \subseteq r(\tau_{n,m \vee p}(\overline{U}\,\overline{V_m} \vee \overline{U}\,\overline{V_p})) \subseteq r(\overline{V_m}) \cap r(\overline{V_p}) \subseteq s(U),
\]
and so $\tau_{n,m \vee p}(C)$ is a compact subset of $r|_{\Lambda^{(m \vee p)-n}}^{-1}(s(U))$. Hence we can cover $\tau_{n,m\vee p}(C)$ with finitely many precompact open $s$-sections $V_1,\dotsc,V_l \subseteq \Lambda^{(m \vee p)-n}$ such that $r(\overline{V_i}) \subseteq s(U)$ for each $i \in \{1,\dotsc,l\}$. As in \cite[Remark~2.9]{LPS2014}, let $\xi_1, \dotsc, \xi_l$ be a partition of unity subordinate to $\{V_i \cap \tau_{n, m \vee p}(C) : 1 \le i \le l\}$. Define $h\colon \Lambda^{m\vee p}\to \C$ by $h(\lambda) \coloneqq f(\lambda(0,m))g(\lambda(0,p))$. We have $h=(f\circ\rho_{m,m\vee p})(g\circ \rho_{p,m\vee p})$, and hence $h$ is continuous. Furthermore, we have
\[
\supp(h) \subseteq \supp(f \circ \rho_{m,m \vee p}) \subseteq \rho_{m,m \vee p}^{-1}(\supp(f)),
\]
and so $\supp(h)$ is compact because $\rho_{m,m \vee p}$ is a continuous proper map. For each $i \in \{1,\dotsc,l\}$, define $h_i\colon \Lambda^{m\vee p}\to \C$ by 
\[
h_i(\lambda) \coloneqq
\begin{cases}
h(\lambda)(\xi_i\circ \tau_{n,m\vee p})(\lambda) & \text{if } \lambda \in C, \\
0 & \text{otherwise.}
\end{cases}
\]
Then $\supp(h_i) \subseteq \supp(h)$, and so $h_i\in C_c(\Lambda^{m\vee p})$ for each $i \in \{1,\dotsc,l\}$. We also have 
\begin{align*}
\osupp(h_i) &= \osupp(h)\, \cap\, \osupp(\xi_i \circ \tau_{n,m \vee p}|_C) \\
&= \osupp(f \circ \rho_{m,m \vee p})\, \cap\, \osupp(g \circ \rho_{p,m \vee p})\, \cap\, (\tau_{n,m \vee p}|_C)^{-1}(\osupp(\xi_i)) \\
&= \rho_{m,m \vee p}^{-1}(\osupp(f)) \cap \rho_{p,m \vee p}^{-1}(\osupp(g))\, \cap\, \tau_{n,m \vee p}^{-1}(\osupp(\xi_i)) \cap C \\
&\subseteq \rho_{m,m \vee p}^{-1}(UV_m)\, \cap\, \rho_{p,m \vee p}^{-1}(UV_p)\, \cap\, \tau_{n,m \vee p}^{-1}(V_i) \\
&= U V_m \Lambda^{(m \vee p)-m}\, \cap\, U V_p \Lambda^{(m \vee p)-p}\, \cap\, \Lambda^n V_i \\
&\subseteq UV_i,
\end{align*}
and so $h_i \in \AA_U$ for each $i \in \{1,\dotsc,l\}$.

Furthermore, for each $x \in \Lambda^\infty$ we have
\begin{align*}
\Big(\sum_{i=1}^l h_i\circ\rho_{m \vee p,\infty}\Big)(x) &= \sum_{i=1}^l h_i(x(0,m\vee p)) \\
&= \sum_{i=1}^l h(x(0,m\vee p))\xi_i(x(n,m\vee p)) \\
&= f(x(0,m))g(x(0,p))\sum_{i=1}^l\xi_i(x(n,m\vee p)) \\
&=
\begin{cases}
f(x(0,m))g(x(0,p)) & \text{if } x \in Z(C), \\
0 & \text{otherwise}
\end{cases} \\
&= (f\circ\rho_{m,\infty})(g\circ\rho_{p,\infty})(x).
\end{align*}
Hence $(f\circ\rho_{m,\infty})(g\circ\rho_{p,\infty})=\sum_{i=1}^l h_i\circ\rho_{m \vee p,\infty} \in \vecspan\AA_U$, and the claim is proved.

It is clear that $\vecspan\AA_U$ is closed under complex conjugation. To see that $\vecspan\AA_U$ strongly separates the points of $Z(U)$, let $x,y\in Z(U)$ with $x \ne y$. Choose $m \in \N^k$ with $m \ge n$ and $x(0,m) \ne y(0,m)$. If $x(0,n) = y(0,n)$, then we must have $x(n,m) \ne y(n,m)$. If $x(0,n) \ne y(0,n)$, then $x(n) \ne y(n)$ because $s|_U$ is injective, and so $x(n,m) \ne y(n,m)$. Since $\Lambda^{m-n}$ is a locally compact Hausdorff space, there exist disjoint precompact open $s$-sections $V_x$ and $V_y$ contained in $\Lambda^{m-n}$ such that $r(\overline{V_x}) \subseteq s(U)$, $r(\overline{V_y}) \subseteq s(U)$, $x(n,m) \in V_x$, and $y(n,m) \in V_y$. By Urysohn's lemma, there exist $f_x, f_y \in C_c(\Lambda^m)$ such that $0 \le f_x, f_y \le 1$, $\osupp(f_x) \subseteq UV_x$, $\osupp(f_y) \subseteq UV_y$, and $f_x(x(0,m)) = f_y(y(0,m)) = 1$. Since $V_x \cap V_y = \emptyset$, we have $y(n,m) \notin V_x$, and hence $y(0,m) \notin UV_x$. Hence $f_x(y(0,m)) = 0 \ne 1 = f_x(x(0,m))$. The Stone--Weierstrass theorem now implies that $\clspan\AA_U = C_0(Z(U))$.
\end{proof}

We need one final lemma before we can prove \autoref{thm: main theorem}~\autoref{item: thm zeta and the O map}.

\begin{lemma} \label{lem: uniform norm is Y_n norm}
Suppose that $\Lambda$ is a proper, source-free topological $k$-graph. Fix $n \in \N^k$, and let $U \subseteq \Lambda^n$ be a precompact open $s$-section. Then for all $f \in C_c(Z(U))$, we have $\lv f \rv_\infty = \lv f \rv_{Y_n}$.
\end{lemma}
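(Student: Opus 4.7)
The plan is to exploit the fact that, because $U$ is an $s$-section contained in $\Lambda^n$, Proposition shift maps tells us that $T^n|_{Z(U)}$ is injective. This will collapse the sum defining the $Y_n$-inner product on elements supported in $Z(U)$ to at most one nonzero term, which is exactly what makes the module norm coincide with the uniform norm.

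More precisely, I would fix $f \in C_c(Z(U))$ and compute $\la f,f \ra_{Y_n}(x)$ for $x \in \Lambda^\infty$. By definition, this is $\sum_{T^n(y)=x}\lav f(y)\rav^2$. Since $\supp(f) \subseteq Z(U)$ and $T^n|_{Z(U)}$ is injective, the fibre $\{y \in Z(U) : T^n(y) = x\}$ is either empty or a single point $y_x$. In the first case the inner product is $0$; in the second case it equals $\lav f(y_x)\rav^2$. Hence
\[
\la f,f\ra_{Y_n}(x) = \begin{cases} \lav f(y_x)\rav^2 & \text{if } \exists\, y_x \in Z(U) \text{ with } T^n(y_x)=x, \\ 0 & \text{otherwise.}\end{cases}
\]

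Taking the supremum over $x \in \Lambda^\infty$ immediately gives $\lv f \rv_{Y_n}^2 \le \lv f \rv_\infty^2$, since each nonzero term is bounded above by $\lv f \rv_\infty^2$. For the reverse inequality, note that every $y \in Z(U)$ gives rise to $x := T^n(y)$ with $y_x = y$, so $\lav f(y)\rav^2 = \la f,f\ra_{Y_n}(T^n(y)) \le \lv f\rv_{Y_n}^2$; taking the supremum over $y \in Z(U)$ (equivalently, over $y \in \Lambda^\infty$, since $f$ vanishes outside $Z(U)$) yields $\lv f\rv_\infty^2 \le \lv f\rv_{Y_n}^2$.

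There is no real obstacle here: the only substantive input is the injectivity of $T^n|_{Z(U)}$, which has already been established in Proposition shift maps. The argument is essentially bookkeeping once that injectivity is invoked.
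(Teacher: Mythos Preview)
Your proof is correct and follows essentially the same approach as the paper: both invoke the injectivity of $T^n|_{Z(U)}$ from the proposition on shift maps to collapse the sum in $\la f,f\ra_{Y_n}$ to a single term, after which the equality of norms is immediate. The paper simply presents this as a one-line chain of equalities rather than splitting into two inequalities, but the content is identical.
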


\begin{proof}
Since $U \subseteq \Lambda^n$ is an $s$-section, \autoref{prop: shift maps} implies that $T^n|_{Z(U)}$ is injective. Therefore, we have
\[
\lv f \rv_{Y_n}^2 = \lv \la f, f \ra_{Y_n} \rv = \sup \Big\{ \Big\lav \sum_{\substack{y \in \Lambda^\infty, \\ T^n(y) = x}} \lav f(y) \rav^2\, \Big\rav : x \in \Lambda^\infty \Big\} = \sup \{ \lav f(y) \rav^2 : y \in Z(U) \} = \lv f \rv_\infty^2.
\]
\end{proof}

We can now finish the proof of our main result.

\begin{proof}[Proof of \autoref{thm: main theorem}~\autoref{item: thm zeta and the O map}]
Consider the representation $\zeta\coloneqq q_Y\circ\psi\colon X\to \OO(Y)$. To see that $\zeta$ is Cuntz--Pimsner covariant, we first claim that for each $n\in\N^k$ we have
\begin{equation} \label{eq: for CP covariance of zeta}
\psi^{(n)}\circ\phi_{X_n}-\psi_0 = (i_Y^{(n)}\circ\phi_{Y_n}-i_{Y,0})\circ\alpha_0.
\end{equation}
Fix $n \in \N^k$ and $g\in C_c(\Lambda^0)$. Since $r|_{\Lambda^n}$ is proper, $N \coloneqq r|_{\Lambda^n}^{-1}(\supp(g))$ is a compact subset of $\Lambda^n$. We can therefore cover $N$ with finitely many open $s$-sections $U_1, \dotsc, U_l \subseteq \Lambda^n$. Choose a partition of unity $\widetilde{\omega}_1,\dotsc,\widetilde{\omega}_l$ subordinate to $\{U_i \cap N : 1 \le i \le l\}$. We use the Tietze extension theorem to extend each $\widetilde{\omega}_i$ to a function $\omega_i \in C_c(\Lambda^n)$. Note that, whilst $\omega_1, \dotsc, \omega_l$ is not a partition of unity on $\Lambda^n$, we do have $\big( \sum_{i=1}^l \omega_i\big)|_N \equiv 1$, and $\osupp(\omega_i|_N) \subseteq U_i$ for each $i \in \{1,\dotsc,l\}$. For each $i \in \{1,\dotsc,l\}$, we define $g_i \coloneqq \sqrt{g \cdot \omega_i}$. Then each $g_i$ is an element of $C_c(\Lambda^n) \subseteq X_n$. Now, for each $f \in Y_n$ and $\lambda \in \Lambda^n$, we have
\[
\Big( \sum_{i=1}^l \Theta_{g_i, \overline{g_i}} \Big)(f)(\lambda) = \sum_{i=1}^l \sqrt{g(r(\lambda))}\sqrt{\omega_i(\lambda)}\Big(\sum_{\mu\in \Lambda^n s(\lambda)}\sqrt{g(r(\mu))}\sqrt{\omega_i(\mu)}f(\mu)\Big).
\]
Now, for this expression to be nonzero, we must have $\lambda,\mu \in U_i \cap N$. Since $U_i$ is an $s$-section, the only nonzero term that could appear in the right-hand sum is the one obtained by taking $\mu = \lambda$. It follows that 
\[
\Big( \sum_{i=1}^l \Theta_{g_i, \overline{g_i}} \Big)(f)(\lambda) = \sum_{i=1}^l g(r(\lambda))\omega_i(\lambda) f(\lambda) = g(r(\lambda))f(\lambda)=\phi_{X_n}(g)(f)(\lambda).
\]
Hence we have
\begin{equation} \label{eq: left action in X}
\phi_{X_n}(g)=\sum_{i=1}^l \Theta_{g_i, \overline{g_i}}.
\end{equation} 
For each $h\in Y_n$ and $x\in \Lambda^\infty$, we have
\begin{align*}
\Big( \sum_{i=1}^l \Theta_{\alpha_n(g_i), \alpha_n(\overline{g_i})} \Big)(h)(x) &= \sum_{i=1}^l \alpha_n(g_i)(x)\Bigg(\sum_{\substack{y\in\Lambda^\infty, \\ T^n(y)=T^n(x)}}\overline{\alpha_n(\overline{g_i})(y)}h(y)\Bigg) \\
&=\sum_{i=1}^l g_i(x(0,n))\Bigg(\sum_{\substack{y\in\Lambda^\infty, \\ T^n(y)=T^n(x)}}g_i(y(0,n)h(y)\Bigg).
\end{align*}
Now, for this expression to be nonzero, we must have $x(0,n), y(0,n) \in U_i \cap N$. Since $T^n$ is injective on $U_i \cap N$, the only term that could appear in the right-hand sum is the one obtained by taking $y = x$. It follows that
\begin{align*}
\Big( \sum_{i=1}^l \Theta_{\alpha_n(g_i), \alpha_n(\overline{g_i})} \Big)(h)(x) 
&= \sum_{i=1}^l (g_i(x(0,n)))^2 h(x)\\
&= g(r(x(0,n)))h(x) \\
&= \alpha_0(g)(T^0(x))h(x) \\
&=\phi_{Y_n}(\alpha_0(g))(h)(x).
\end{align*}
Hence we have
\begin{equation} \label{eq: left action in Y}
\phi_{Y_n}(\alpha_0(g))=\sum_{i=1}^l \Theta_{\alpha_n(g_i), \alpha_n(\overline{g_i})} = \alpha_n^\KK\Big(\sum_{i=1}^l \Theta_{g_i,\overline{g_i}} \Big).
\end{equation}
We see from \autoref{eq: left action in X} and \autoref{eq: left action in Y} that $\alpha_n^\KK \circ \phi_{X_n} = \phi_{Y_n}\circ \alpha_0$. Hence we have
\[
\psi^{(n)}\circ\phi_{X_n}-\psi_0 = i_Y^{(n)}\circ \alpha_n^\KK \circ \phi_{X_n} - i_{Y,0}\circ \alpha_0 = i_Y^{(n)} \circ \phi_{Y_n}\circ \alpha_0 - i_{Y,0}\circ \alpha_0 = (i_Y^{(n)}\circ\phi_{Y_n}-i_{Y,0})\circ\alpha_0,
\]
and the claim holds. We now use \autoref{eq: for CP covariance of zeta} and the Cuntz--Pimsner covariance of $j_Y$ to get 
\begin{align*}
\zeta^{(n)}\circ\phi_{X_n}-\zeta_0 &= (q_Y\circ \psi^{(n)})\circ\phi_{X_n} - q_Y\circ \psi_0 \\ 
&= q_Y\circ (\psi^{(n)}\circ\phi_{X_n}-\psi_0) \\ 
&= q_Y\circ \big((i_Y^{(n)}\circ\phi_{Y_n}-i_{Y,0})\circ\alpha_0\big) \\
&= (j_Y^{(n)}\circ\phi_{Y_n}-j_{Y,0})\circ\alpha_0 \\
&\equiv 0.
\end{align*}
Hence $\zeta$ is Cuntz--Pimsner covariant, and so it induces a homomorphism $\zeta^\OO\colon C^*(\Lambda,c) \to \OO(Y)$ satisfying $\zeta^\OO\circ j_X=\zeta$. We now show that $\zeta^\OO$ is an isomorphism. To see that $\zeta^\OO$ is surjective, fix $g \in C_c(\Lambda^\infty)$ and $n \in \N^k$. We aim to show that $j_{Y,n}(g) \in \range(\zeta^\OO)$. 

Choose $m \in \N^k$ with $m \ge n$. Let $A$ and $U$ be open $s$-sections of $\Lambda$ such that $\overline{U}$ is compact, and $\overline{U} \subseteq A \subseteq \Lambda^n$. Let $B$ and $V$ be open $s$-sections of $\Lambda$ such that $\overline{V}$ is compact, $\overline{V} \subseteq B \subseteq \Lambda^{m-n}$, and $r(\overline{V}) \subseteq s(U)$. For each $v \in r(\overline{V})$, let $\lambda_{U,v}$ denote the unique element of $U$ with source $v$. Note that if $\lambda \in A$ and $\mu \in s(\lambda)\overline{V}$, then $\lambda_{U,r(\mu)} = \lambda$, because $U$ is contained in the $s$-section $A$. Let $f\in C_c(\Lambda^m)$ with $\osupp(f)\subseteq UV$. Recall from \autoref{prop: NC repn of X} the definitions of the maps $\alpha_{n,m}$ and $\alpha_n \coloneqq \alpha_{n,n}$. We claim that 
\begin{align} \label{eq: j_Y(f) in range zeta^OO}
j_{Y,n}(\alpha_{n,m}(f)) \in \range(\zeta^\OO).
\end{align}

The map $V\to \C$ given by $\mu\mapsto f(\lambda_{U,r(\mu)}\mu)$ is continuous, and has compact support because its support is contained in $\tau_{n,m}(\supp(f))$. Since $V$ is open, we can extend this function to $\widetilde{f}\in C_c(\Lambda^{m-n})$ given by
\[
\widetilde{f}(\mu)=
\begin{cases}
f(\lambda_{U,r(\mu)}\mu) & \ \text{if } \mu \in V, \\
0 & \ \text{if } \mu \not\in V.
\end{cases}
\]
Since $\overline{U}$ is a compact subset of $\Lambda^n$, we can cover it with finitely many precompact open $s$-sections $W_1,\dotsc,W_l \subseteq \Lambda^n$. Choose a partition of unity $\widetilde{\xi_1},\dotsc,\widetilde{\xi_l}$ subordinate to $\{W_i \cap \overline{U} : 1 \le i \le l\}$. Use the Tietze extension theorem to extend each $\widetilde{\xi_i}$ to a function $\xi_i$ in $C_c(\Lambda^n)$ with $\osupp(\xi_i) \subseteq A$. Note that, whilst $\xi_1,\dotsc,\xi_l$ is not a partition of unity on $\Lambda^n$, we do have $\big(\sum_{i=1}^l \xi_i\big)|_{\overline{U}} \equiv 1$, and $\osupp(\xi_i|_{\overline{U}}) \subseteq W_i$ for each $i \in \{1,\dotsc,l\}$. For each $x\in \Lambda^\infty$, we have
\begin{align*}
\Big(\sum_{i=1}^l \alpha_n(\xi_i)\cdot\alpha_{0,m-n}(\widetilde{f})\Big)(x) &= \sum_{i=1}^l \alpha_n(\xi_i)(x)\alpha_{0,m-n}(\widetilde{f})(T^n(x)) \\
&= \sum_{i=1}^l \xi_i(x(0,n))\widetilde{f}(x(n,m)) \\
&= 
\begin{cases}
\widetilde{f}(x(n,m)) & \ \text{if } x \in Z(\overline{U}), \\
0 & \ \text{otherwise}
\end{cases} \\
&= f(x(0,m)) \\
&= \alpha_{n,m}(f)(x).
\end{align*}
Hence we have
\begin{equation} \label{eq: action decomp for alpha}
\alpha_{n,m}(f)=\sum_{i=1}^l\alpha_n(\xi_i)\cdot\alpha_{0,m-n}(\widetilde{f}).
\end{equation}
Since $\overline{V}$ is a compact subset of $\Lambda^{m-n}$, we can cover it with finitely many precompact open $s$-sections $Z_1,\dotsc,Z_p \subseteq \Lambda^{m-n}$. Choose a partition of unity $\widetilde{\eta}_1,\dotsc,\widetilde{\eta}_p$ subordinate to $\{Z_j \cap \overline{V} : 1 \le j \le p\}$. Use the Tietze extension theorem to extend each $\widetilde{\eta}_i$ to a function $\eta_i \in C_c(\Lambda^{m-n})$ with $\osupp(\eta_i) \subseteq B$.  Note that, whilst $\eta_1,\dotsc,\eta_p$ is not a partition of unity on $\Lambda^{m-n}$, we do have $\big(\sum_{j=1}^p \eta_j\big)|_{\overline{V}} \equiv 1$, and $\osupp(\eta_j|_{\overline{V}}) \subseteq Z_j$ for each $j \in \{1,\dotsc,p\}$. For each $g\in Y_{m-n}$ and $x\in\Lambda^\infty$, we have
\begin{align*}
\Big(\sum_{j=1}^p \Theta_{\alpha_{m-n}(\widetilde{f}),\alpha_{m-n}(\eta_j)}\Big)(g)(x) &= \sum_{j=1}^p\alpha_{m-n}(\widetilde{f})(x)\la\alpha_{m-n}(\eta_j),g\ra_{Y_{m-n}}(T^{m-n}(x)) \\
&= \alpha_{m-n}(\widetilde{f})(x)\sum_{j=1}^p\sum_{\substack{y\in\Lambda^\infty, \\ T^{m-n}(y) = T^{m-n}(x)}}\overline{\alpha_{m-n}(\eta_j)(y)}g(y).
\end{align*}
Now, we know that $\alpha_{m-n}(\widetilde{f})$ is supported in $Z(\overline{V})$, and each $\alpha_{m-n}(\eta_j)$ is supported in $Z(B)$. Since $B$ is an $s$-section, \autoref{prop: shift maps} implies that $T^{m-n}|_{Z(B)}$ is injective. Thus, if $x \in Z(\overline{V}) \subseteq Z(B)$, then we have $\{y\in Z(B):T^{m-n}(y)=T^{m-n}(x)\}=\{x\}$, and so we can continue the above calculation to get 
\begin{align*}
\Big(\sum_{j=1}^p \Theta_{\alpha_{m-n}(\widetilde{f}),\alpha_{m-n}(\eta_j)}\Big)(g)(x) &= \alpha_{m-n}(\widetilde{f})(x)\Big(\sum_{j=1}^p\overline{\alpha_{m-n}(\eta_j)(x)}g(x)\Big) \\
&= \alpha_{m-n}(\widetilde{f})(x)g(x) \\
&= \phi_{Y_{m-n}}(\alpha_{0,m-n}(\widetilde{f}))(g)(x).
\end{align*}
Hence we have
\begin{equation} \label{eq: left action of f tilde as compacts}
\phi_{Y_{m-n}}(\alpha_{0,m-n}(\widetilde{f}))=\sum_{j=1}^p \Theta_{\alpha_{m-n}(\widetilde{f}),\alpha_{m-n}(\eta_j)}.
\end{equation}

We use \autoref{eq: action decomp for alpha} to get
\begin{align*}
& i_{Y,n}(\alpha_{n,m}(f)) \\
&\quad= i_{Y,n}\Big(\sum_{i=1}^l\alpha_n(\xi_i)\cdot\alpha_{0,m-n}(\widetilde{f})\Big) \\
&\quad= \sum_{i=1}^li_{Y,n}(\alpha_n(\xi_i))i_{Y,0}(\alpha_{0,m-n}(\widetilde{f})) \\
&\quad=\sum_{i=1}^l\Big(i_{Y,n}(\alpha_n(\xi_i))i_{Y,0}(\alpha_{0,m-n}(\widetilde{f})) + i_{Y,n}(\alpha_n(\xi_i))\Big(\sum_{j=1}^pi_{Y,m-n}(\alpha_{m-n}(\widetilde{f}))i_{Y,m-n}(\alpha_{m-n}(\eta_j))^*\Big) \\ 
&\hspace{8em} - 
i_{Y,n}(\alpha_n(\xi_i))\Big(\sum_{j=1}^pi_{Y,m-n}(\alpha_{m-n}(\widetilde{f}))i_{Y,m-n}(\alpha_{m-n}(\eta_j))^*\Big)\Big) \\
&\quad= \sum_{i=1}^l\Big(i_{Y,n}(\alpha_n(\xi_i))\Big(\sum_{j=1}^pi_{Y,m-n}(\alpha_{m-n}(\widetilde{f}))i_{Y,m-n}(\alpha_{m-n}(\widetilde{f}))^*\Big) \\
&\hspace{8em} - i_{Y,n}(\alpha_n(\xi_i))\Big(\sum_{j=1}^p(i_{Y,m-n}(\alpha_{m-n}(\widetilde{f}))i_{Y,m-n}(\alpha_{m-n}(\eta_j))^*)-i_{Y,0}(\alpha_{0,m-n}(\widetilde{f}))\Big)\Big) \\
&\quad= \sum_{i=1}^l\Big(i_{Y,n}(\alpha_n(\xi_i))\Big(\sum_{j=1}^pi_{Y,m-n}(\alpha_{m-n}(\widetilde{f}))i_{Y,m-n}(\alpha_{m-n}(\eta_j))^*\Big) \\
&\hspace{8em} - i_{Y,n}(\alpha_n(\xi_i))\Big(i_Y^{(m-n)}\Big(\sum_{j=1}^p\Theta_{\alpha_{m-n}(\widetilde{f}),\alpha_{m-n}(\eta_j)}\Big)-i_{Y,0}(\alpha_{0,m-n}(\widetilde{f}))\Big)\Big).
\end{align*}
We now use \autoref{eq: left action of f tilde as compacts} to write the above expression for $ i_{Y,n}(\alpha_{n,m}(f))$ as
\begin{align*}
 i_{Y,n}(\alpha_{n,m}(f)) = \sum_{i=1}^l \Big(i_{Y,n}&(\alpha_n(\xi_i))\Big(\sum_{j=1}^pi_{Y,m-n}(\alpha_{m-n}(\widetilde{f}))i_{Y,m-n}(\alpha_{m-n}(\eta_j))^*\Big) \numberthis \label{eq: new exp} \\ 
&- i_{Y,n}(\alpha_n(\xi_i))\Big(i_Y^{(m-n)}\big(\phi_{Y_{m-n}}(\alpha_{0,m-n}(\widetilde{f}))\big)-i_{Y,0}(\alpha_{0,m-n}(\widetilde{f}))\Big)\Big).
\end{align*}
Since 
\[
q_Y\big(i_Y^{(m-n)}\big(\phi_{Y_{m-n}}(\alpha_{0,m-n}(\widetilde{f}))\big)-i_{Y,0}(\alpha_{0,m-n}(\widetilde{f}))\big) =0,
\]
we can apply the quotient map $q_Y$ to \autoref{eq: new exp} to see that
\begin{align*}
j_{Y,n}(\alpha_{n,m}(f)) &= \sum_{i=1}^l \Big(j_{Y,n}(\alpha_n(\xi_i))\Big(\sum_{j=1}^p j_{Y,m-n}(\alpha_{m-n}(\widetilde{f}))j_{Y,m-n}(\alpha_{m-n}(\eta_j))^*\Big)\Big) \\
&= \sum_{i=1}^l \Big(\zeta^\OO(j_{X,n}(\xi_i))\Big(\sum_{j=1}^p \zeta^\OO(j_{X,m-n}(\widetilde{f}))\zeta^\OO(j_{X,m-n}(\eta_j))^*\Big)\Big) \\
&\in \range(\zeta^\OO),
\end{align*}
which proves that \eqref{eq: j_Y(f) in range zeta^OO} holds.

Recall that $g \in C_c(\Lambda^\infty)$, and that we are aiming to show that $j_{Y,n}(g) \in \range(\zeta^\OO)$. Fix $\varepsilon>0$. We will find $a \in C^*(\Lambda,c)$ such that $\lv j_{Y,n}(g) - \zeta^\OO(a) \rv < \varepsilon$. By \autoref{lem: nice nbhds of infinite paths}, we can choose finitely many open subsets $U_1, \dotsc, U_l$ of $\Lambda^n$ such that for each $i \in \{1,\dotsc,l\}$, $\overline{U_i}$ is a compact $s$-section, and $\{ Z(U_i) : 1 \le i \le l \}$ is an open cover of $\supp(g)$. As in \cite[Remark~2.9]{LPS2014}, choose a partition of unity $g_1,\dotsc,g_l$ subordinate to $\{ Z(U_i) \cap \supp(g) : 1 \le i \le l \}$. For each $i \in \{1,\dotsc,l\}$, define $h_i\colon \Lambda^\infty \to \C$ by
\[
h_i(x) \coloneqq \begin{cases}
g(x)g_i(x)\ &\text{if } x \in \supp(g), \\
0 \ &\text{if } x \notin \supp(g).
\end{cases}
\]
Then $g = \sum_{i=1}^l h_i$. Since we have $h_i \in C_c(Z(U_i))$ for each $i \in \{1,\dotsc,l\}$, we can apply \autoref{lem: C_0(Z(U)) SWT} to approximate $h_i$ by a sum of $p_i$ elements of $\AA_{U_i}$; that is, for each $i \in \{1,\dotsc,l\}$ and each $j \in \{1,\dotsc,p_i\}$, there exist $m_{i,j} \in \N^k$ with $m_{i,j} \ge n$, a precompact open $s$-section $V_{i,j} \subseteq \Lambda^{m_{i,j}-n}$ satisfying $r(\overline{V_{i,j}}) \subseteq s(U_i)$, and $f_{i,j} \in C_c(\Lambda^{m_{i,j}})$ with $\osupp(f_{i,j}) \subseteq U_i V_{i,j}$, such that
\begin{equation} \label{eq: uniform norm < epsilon/l}
\Big\lv h_i - \sum_{j=1}^{p_i} f_{i,j} \circ \rho_{m_{i,j},\infty} \Big\rv_\infty < \frac{\varepsilon}{l}.
\end{equation}
Since $h_i - \sum_{j=1}^{p_i} f_{i,j} \circ \rho_{m_{i,j},\infty} \in C_c(Z(U_i))$ for each $i \in \{1,\dotsc,l\}$, \autoref{lem: uniform norm is Y_n norm} and \autoref{eq: uniform norm < epsilon/l} together imply that
\begin{equation} \label{eq: Y_n norm < epsilon/l}
\Big\lv h_i - \sum_{j=1}^{p_i} \alpha_{n,m_{i,j}}(f_{i,j}) \Big\rv_{Y_n} =\, \Big\lv h_i - \sum_{j=1}^{p_i} f_{i,j} \circ \rho_{m_{i,j},\infty} \Big\rv_{Y_n} =\, \Big\lv h_i - \sum_{j=1}^{p_i} f_{i,j} \circ \rho_{m_{i,j},\infty} \Big\rv_\infty < \frac{\varepsilon}{l}.
\end{equation}
Since we have established that \eqref{eq: j_Y(f) in range zeta^OO} is true, we know that for each $i \in \{1,\dotsc,l\}$, we have 
\[
\sum_{j=1}^{p_i} j_{Y,n}(\alpha_{n,m_{i,j}}(f_{i,j})) \in \range(\zeta^\OO),
\]
and hence we can choose $a_i \in C^*(\Lambda,c)$ with $\zeta^\OO(a_i) = \sum_{j=1}^{p_i} j_{Y,n}(\alpha_{n,m_{i,j}}(f_{i,j}))$. Let $a \coloneqq \sum_{i=1}^l a_i \in C^*(\Lambda,c)$. Then we have $\zeta^\OO(a) = \sum_{i=1}^l j_{Y,n}\big(\sum_{j=1}^{p_i} \alpha_{n,m_{i,j}}(f_{i,j})\big)$, and hence \autoref{eq: Y_n norm < epsilon/l} implies that
\begin{align*}
\lv j_{Y,n}(g) - \zeta^\OO(a) \rv &= \Big\lv j_{Y,n}\Big(\sum_{i=1}^l h_i\Big) - \sum_{i=1}^l j_{Y,n}\Big(\sum_{j=1}^{p_i} \alpha_{n,m_{i,j}}(f_{i,j})\Big) \Big\rv \\
&= \Big\lv \sum_{i=1}^l j_{Y,n}\Big(h_i - \sum_{j=1}^{p_i} \alpha_{n,m_{i,j}}(f_{i,j})\Big) \Big\rv \\
&\le \sum_{i=1}^l \Big\lv h_i - \sum_{j=1}^{p_i} \alpha_{n,m_{i,j}}(f_{i,j}) \Big\rv_{Y_n} \\
&< \varepsilon.
\end{align*}
Therefore, $\zeta^\OO$ is surjective.

To see that $\zeta^\OO$ is injective, we will apply \cite[Corollary~4.14]{CLSV2011}. Part~(i) of \cite[Corollary~4.14]{CLSV2011} holds because the gauge action $\gamma$ on $\OO(Y)$ satisfies $\gamma_z(\zeta^\OO(j_X(f)))=z^n\zeta^\OO(j_X(f))$ for all $n\in\N^k$, $f\in X_n$, and $z\in\T^k$. For part~(ii) of \cite[Corollary~4.14]{CLSV2011}, we first observe that $j_Y$ is isometric by \cite[Theorem~4.1]{SY2010}. Then for each $f,g \in C_0(\Lambda^0)$, we have 
\[
\zeta^\OO(j_{X,0}(f)) = \zeta^\OO(j_{X,0}(g)) \implies j_{Y,0}(\alpha_0(f)) = j_{Y,0}(\alpha_0(g)) \implies \alpha_0(f) = \alpha_0(g),
\]
and so \autoref{lem: props of alpha}~\autoref{item: alpha_n is injective} implies that $f = g$. Hence $\zeta^\OO|_{j_X(C_0(\Lambda^0))}$ is injective, and \cite[Corollary~4.14~(ii)]{CLSV2011} holds. Therefore, \cite[Corollary~4.14]{CLSV2011} implies that $\zeta^\OO$ is injective.
\end{proof}

\begin{remark}
In \cite[Section~6]{KPS2015TAMS}, Kumjian, Pask, and Sims use a $\T$-valued $2$-cocycle $c$ on a row-finite $k$-graph $\Lambda$ with no sources to construct a continuous $\T$-valued $2$-cocycle $\sigma_c$ on the path groupoid $\GG_\Lambda$. They then prove that the twisted groupoid $C^*$-algebra $C^*(\GG_\Lambda,\sigma_c)$ (built using Renault's construction in \cite{Renault1980}) is isomorphic to the twisted $k$-graph $C^*$-algebra $C^*(\Lambda,c)$ (denoted by $C_{\KPS}^*(\Lambda,c)$ in \autoref{item: generalising C*(Lambda,c)}). It is not clear that the construction of $\sigma_c$ in \cite{KPS2015TAMS} will give rise to a continuous $\T$-valued $2$-cocycle $\sigma_c$ on Yeend's boundary-path groupoid $\GG_\Lambda$, for $\Lambda$ a proper, source-free topological $k$-graph. It would be interesting to see if one could modify the construction in \cite{KPS2015TAMS} to cater for topological $k$-graphs, and then prove that there is an isomorphism between the twisted Cuntz--Krieger algebra $C^*(\Lambda,c)$ defined in \autoref{def: the C*s}, and the twisted groupoid $C^*$-algebra $C^*(\GG_\Lambda,\sigma_c)$. 
\end{remark}

\vspace{0.5em}
\bibliographystyle{amsplain}
\makeatletter\renewcommand\@biblabel[1]{[#1]}\makeatother
\bibliography{AB1_references}

\end{document}